\newcommand{\ovl}{\overline}
\newcommand{\n}{\noindent}
\newcommand{\vp}{\varepsilon}
\newcommand{\cl}[1]{{\mathcal{#1}}}
\newcommand{\bb}[1]{{\mathbb{#1}}}
\newcommand{\oneGN}{\cl{GN}^{(1)}}
\newcommand{\twoGN}{\cl{GN}}
\numberwithin{equation}{section}
\theoremstyle{plain}
\newtheorem{lem}{Lemma}[section]
\newtheorem{pro}[lem]{Proposition}
\newtheorem{thm}[lem]{Theorem}
\newtheorem{cor}[lem]{Corollary}
\theoremstyle{definition}
\newtheorem{defn}[lem]{Definition}
\newtheorem{exm}[lem]{Example}
\newtheorem{rem}[lem]{Remark}
\theoremstyle{remark}
\newcommand{\Tr}{\mathrm{Tr}}
\newcommand{\vnotimes}{\ \overline{\otimes}\ }
\newcommand{\IIi}{$\mathrm{II}_1$ }
\begin{document}

\title{\textsc{Groupoid Normalizers of Tensor Products}}
\date{}
\author{Junsheng Fang\\\normalsize\texttt{jfang@cisunix.unh.edu}
\and Roger R. Smith\thanks{Partially supported by a grant from the National Science Foundation}\\ 
\normalsize\texttt{rsmith@math.tamu.edu}\and 
Stuart A. White\\\normalsize\texttt{s.white@maths.gla.ac.uk}\and Alan D. Wiggins\\
\normalsize\texttt{ alan.d.wiggins@vanderbilt.edu}}

\maketitle

\begin{abstract}
We consider an inclusion $B\subseteq M$ of finite von Neumann algebras satisfying
$B'\cap M\subseteq B$. A partial isometry $v\in M$ is called a groupoid normalizer if 
$vBv^*,\ v^*Bv\subseteq B$. Given two such inclusions $B_i\subseteq M_i$, $i=1,2$, we find approximations to the groupoid normalizers of $B_1 \vnotimes B_2$ in $M_1\vnotimes M_2$, from which we deduce that the von Neumann algebra generated by the groupoid normalizers of the tensor product is equal to the tensor product of the von Neumann algebras generated by the groupoid normalizers. Examples are given to show that this can fail without the hypothesis $B_i'\cap M_i\subseteq B_i$, $i=1,2$. We also prove a parallel result where the groupoid normalizers are replaced by the intertwiners, those partial isometries $v\in M$ satisfying $vBv^*\subseteq B$ and $v^*v,\ vv^*\in B$. 
\end{abstract}

\section{Introduction}\label{gpsec1}

The focus of this paper is an inclusion $B\subseteq M$ of finite von~Neumann algebras. Such inclusions have a rich diverse history, first being studied by Dixmier \cite{Dixmier.Masa} in the context of maximal abelian subalgebras (masas) of \IIi factors. These inclusions provided the basic building blocks for the theory of subfactors developed  by Jones in \cite{Jones.Index} and today they are a key component  in the study of structral properties of \IIi factors using the deformation-rigidity techniques introduced  by Popa in \cite{Popa.Betti}. 

In \cite{Dixmier.Masa}, Dixmier introduced a classification of masas in \IIi factors using \emph{normalizers}, defining $\mathcal N_M(B)=\{u\text{ a unitary in }M:uBu^*=B\}$. A masa $B\subset M$ is \emph{Cartan} or \emph{regular} if these normalizers generate $M$ and \emph{singular} if $\mathcal N_M(B)\subset B$. Feldman and Moore demonstrated the importance of Cartan masas, and hence normalizers, in the study of \IIi factors, showing that inclusions of Cartan masas arise from measurable equivalence relations and that, up to orbit equivalence, these relations determine the resulting inclusion, \cite{Feldman.Moore1,Feldman.Moore2}.  

Given two inclusions $B_i\subset M_i$ of masas in \IIi factors, it is immediate  that an elementary tensor $u_1\otimes u_2$ of unitaries $u_i\in M_i$ normalizes the tensor product inclusion $B=B_1\vnotimes B_2\subset M=M_1\vnotimes M_2$ if and only if each $u_i$ normalizes $B_i$. As a simple consequence, the tensor product of Cartan masas is again Cartan. More generally, the operation of passing to the von Neumann algebra generated by the normalizers was shown to commute with the tensor product operation for masas inside \IIi factors, in the sense that the equality 
\begin{equation}\label{TP}
\mathcal N_{M_1}(B_1)''\vnotimes \mathcal N_{M_2}(B_2)''=\mathcal N_{M_1\vnotimes M_2}(B_1\vnotimes B_2)'',
\end{equation}
holds. This was proved when both masas are singular in \cite{Saw.StrongSing} and the general case was established by Chifan in \cite{Chifan.Normalisers}. Since the containment from left to right in (\ref{TP}) is immediate, the problem  in both cases is to eliminate the possiblity that some unexpected unitary in the tensor product normalizes $B_1\vnotimes B_2$. This  difficulty  was overcome in  \cite{Saw.StrongSing} and \cite{Chifan.Normalisers} by employing techniques of Popa \cite{Popa.StrongRigidity1} to analyse the basic construction algebra $\langle M,e_B\rangle$ of Jones \cite{Jones.Index}. Beyond the masa setting, (\ref{TP}) holds  when each $B_i$ satisfies $B_i'\cap M_i=\mathbb C1$, the defining property of irreducible subfactors.   When each $B_i$ has finite Jones index in $M_i$, the identity (\ref{TP}) can be deduced from results of \cite{Popa.Entropy}. The infinite index case  was established in \cite{Saw.NormalisersSubfactors}, where every normalizing unitary of such a tensor product of irreducible subfactors was shown to be of the form $w(v_1\otimes v_2)$, where $w$ is a unitary in $B_1\vnotimes B_2$ and each $v_i\in\mathcal N_{M_i}(B_i)$. Some other situations where (\ref{TP}) holds are discussed in \cite{Fang.CompletelySingular}.

For general inclusions $B_i\subseteq M_i$ of finite von Neumann algebras, the commutation identity (\ref{TP}) can fail. Indeed, taking each $M_i$ to be a copy of the $3\times 3$ matrices and each $B_i\cong \mathbb C\oplus\mathbb M_2(\mathbb C)$, one obtains inclusions with $\mathcal N_{M_i}(B_i)\subset B_i$, yet there are non-trivial normalizers of $B_1\otimes B_2$ inside $M_1\otimes M_2$. This is due to the presence of partial isometries $v$ in $M_i\setminus B_i$ with $vB_iv^*\subseteq B_i$ and $v^*B_iv\subseteq B_i$, as the non-trivial unitary normalizers of $B_1\otimes B_2$ can all be written in the form $\sum_jx_j(v_{1,j}\otimes v_{2,j})$, where $x_j$ lie in $B_1\otimes B_2$ and the $v_{i,j}$ are partial isometries with $v_{i,j}B_iv_{i,j}^*\subseteq B$ and $v_{i,j}^*B_iv_{i,j}\subseteq B_i$. Defining the \emph{groupoid normalizers} of a unital inclusion $B\subset M$ to be the set $\twoGN_{M}(B)=\{v\text{ a partial isometry in }M:vBv^*\subseteq B,v^*Bv\subseteq B\}$, the example discussed above satisfies the commutation identity
\begin{equation}\label{GNTP}
\twoGN_{M_1}(B_1)''\vnotimes\twoGN_{M_2}(B_2)''=\twoGN_{M_1\vnotimes M_2}(B_1\vnotimes B_2)''.
\end{equation}
In this paper we examine groupoid normalizers of tensor product algebras, establishing (Corollary \ref{TwoCor}) the identity (\ref{GNTP}) whenever $B_i\subseteq M_i$ are inclusions of finite von Neumann algebras with separable preduals satisfying $B_i'\cap M_i\subseteq B_i$ for each $i$. In \cite{Dye.Groups1} Dye shows that every groupoid normalizer $v$ of a masa $B$ in $M$ is of the form $v=ue$ for some projection $e=v^*v\in B$ and some unitary normalizer $u$ of $B$ in $M$, see also \cite[Lemma 6.2.3]{Sinclair.MasaBook}. The same result holds by a direct computation when $B$ is an irreducible subfactor of $M$, so that in these two cases $\mathcal N_M(B)''=\twoGN_M(B)''$ and (\ref{GNTP}) directly generalizes (\ref{TP}) established in \cite{Chifan.Normalisers} and \cite{Saw.NormalisersSubfactors} respectively.

The following example shows why the hypothesis $B_i'\cap M_i\subseteq B_i$ (which is satisfied by both masas and irreducible subfactors) is necessary in this result.
\begin{exm}\label{gpexm4.10}
 Consider the subalgebra
\[
 B = \left\{\begin{pmatrix}
             \alpha&0&0\\ 0&\alpha&0\\ 0&0&\beta
            \end{pmatrix}\colon \ \alpha,\beta\in {\bb C}\right\} \subseteq {\bb M}_3,
\]
and note that $B'\cap {\bb M}_3$ strictly contains $B$. A direct computation shows that $\twoGN (B)'' = {\bb M}_2\oplus {\bb C}$, and so $\twoGN_{\bb M_3}(B)''\otimes \twoGN_{\bb M_3}(B)''\cong{\bb M}_4 \oplus {\bb M}_2 \oplus {\bb M}_2 \oplus {\bb C}$. However, $B\otimes B$ is isomorphic to ${\bb C}I_4 \oplus {\bb C}I_2 \oplus {\bb C}I_2 \oplus {\bb C}$ inside ${\bb M}_9$, and $\twoGN_{\bb M_3\otimes \bb M_3} (B\otimes B)''$ is ${\bb M}_4 \oplus {\bb M}_4 \oplus {\bb C}$. 
\end{exm}

A new feature of \cite{Saw.NormalisersSubfactors} was the notion of \emph{one-sided normalizers} of an irreducible inclusion $B\subset M$ of \IIi factors, namely those unitaries $u\in M$ with $uBu^*\subsetneq B$. These cannot arise for finite index inclusions by index considerations, or in the case when $B\subset M$ is a masa. To establish (\ref{TP}) for irreducible subfactors, it was necessary to first establish the general form of a one-sided normalizer of a tensor product of irreducible subfactors and then deduce the normalizer result from this. The same procedure is necessary here, so we introduce the notion of an \emph{intertwiner} to study groupoid normalizers in a one-sided situation.
\begin{defn}
Given an inclusion $B\subseteq M$ of von Neumann algebras satisfying $B'\cap M\subseteq B$, define the collection $\oneGN_M(B)$ of \emph{intertwiners} of $B$ in $M$ by
$$
\oneGN_M(B)=\{v\text{ a partial isometry in }M:vBv^*\subseteq B,v^*v\in B\}.
$$
\end{defn}
\n We will write $\oneGN(B)$ for $\oneGN_M(B)$ when there is no confusion about the underlying algebra $M$. We use the superscript $^{(1)}$ to indicate that our intertwiners are \emph{one-sided}, namely that although $vBv^*\subseteq B$, we are not guaranteed to have a containment $v^*Bv\subseteq B$. Note that $v\in\twoGN_M(B)$ if, and only if, both $v$ and $v^*$ lies in $\oneGN_M(B)$. Note too that while the groupoid normalizers form a groupoid, the intertwiners do not. Finally, the terminology intertwiner comes from the fact that, under the hypothesis $B'\cap M\subseteq B$, these are exactly the partial isometries that witness the embeddability of a corner of $B$ into itself inside $M$ in the sense of Popa's intertwining procedure for subalgebras from \cite{Popa.Betti,Popa.StrongRigidity1}.

We obtain a similar commutation result to (\ref{GNTP}) for intertwiners. In fact our main theorem, stated below, obtains more as it gives approximate forms for intertwiners and groupoid normalizers of tensor products.
\begin{thm}\label{Main}
Let $B_i\subset M_i$ be inclusions of finite von Neumann algebras with separable preduals and with fixed faithful normal traces $\tau_i$ on $M_i$. For $v\in\oneGN_{M_1\vnotimes M_2}(B_1\vnotimes B_2)$ and $\vp>0$, there exist $k\in\mathbb N$ and operators $x_1,\dots,x_k\in B_1\vnotimes B_2$, intertwiners $w_{1,1},\dots,w_{1,k}$ of $B_1$ in $M_1$ and intertwiners $w_{2,1},\dots,w_{2,k}$ of $B_2$ in $M_2$ such that
\begin{equation}
\left\|v-\sum_{j=1}^kx_j(w_{1,j}\otimes w_{2,j})\right\|_2<\vp,
\end{equation}
where the $\|\cdot\|_2$-norm arises from the trace $\tau_1\otimes\tau_2$ on $M_1\vnotimes M_2$.  If in addition $v$ is a groupoid normalizer, then each $w_{i,j}$ can be taken to be a groupoid normalizer rather than just an intertwiner.
\end{thm}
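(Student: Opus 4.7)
The plan is to handle the intertwiner statement first and then derive the groupoid normalizer refinement by symmetry. Fix $v \in \oneGN_{M_1 \vnotimes M_2}(B_1 \vnotimes B_2)$ and $\vp > 0$, and write $B = B_1 \vnotimes B_2$, $M = M_1 \vnotimes M_2$. The commutant hypotheses on the individual inclusions combine to give $B' \cap M \subseteq B$, so that, as observed in the excerpt, both $v^*v$ and $vv^*$ lie in $B$. For the final sentence of the theorem I would run the intertwiner argument for both $v$ and $v^*$ simultaneously and symmetrize the resulting finite approximations, arranging that each $w_{i,j}$ and its adjoint are both intertwiners, i.e.\ that $w_{i,j}$ is a genuine groupoid normalizer.

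The core of the argument would go through a two-step Pimsner--Popa orthonormal basis expansion. First, choose an orthonormal basis $\{m_\alpha\}_\alpha$ of $M_2$ over $B_2$ and write $v$ as a $\|\cdot\|_2$-convergent sum $v = \sum_\alpha (1 \otimes m_\alpha) y_\alpha$ with $y_\alpha \in M_1 \vnotimes B_2$, truncating to a finite sub-collection within $\vp/4$ of $v$. Second, expand each $y_\alpha$ against an orthonormal basis $\{n_\beta\}_\beta$ of $M_1$ over $B_1$ to obtain a finite double sum $\sum_{\alpha,\beta} (n_\beta \otimes m_\alpha)\, b_{\alpha,\beta}$, with $b_{\alpha,\beta} \in B_1 \vnotimes B_2$, that is within $\vp/2$ of $v$. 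The intertwining identity $vBv^* \subseteq B$, conjugated against elementary tensors $b_1 \otimes b_2 \in B$ and projected by the relevant conditional expectations, produces algebraic constraints on the matrix $(b_{\alpha,\beta})$. The plan is to exploit these constraints, together with the commutant hypothesis on each factor, to replace each $(n_\beta \otimes m_\alpha)$ by an elementary tensor of intertwiners, absorbing the resulting error into the remaining $\vp/2$ budget.

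The main obstacle is this last replacement step. A priori the Pimsner--Popa basis elements $n_\beta$ and $m_\alpha$ need not themselves lie in $\oneGN_{M_i}(B_i)$, and the intertwining condition on $v$ is a single coupled condition linking the two tensor factors through the $B$-valued matrix of coefficients. Decoupling the two factors while preserving an intertwining property in each will require a careful argument, presumably via a preparatory lemma (likely proved earlier in the paper using Popa's intertwining-by-bimodules applied one factor at a time) to the effect that any partial isometry in $M_i$ whose conjugation moves $B_i$ approximately into $B_i$ can be $\|\cdot\|_2$-approximated by honest intertwiners of $B_i$ in $M_i$. Propagating and controlling these approximation errors through the double basis expansion, and verifying that the replacement can be carried out so the final $w_{i,j}$ are genuine partial isometries with $w_{i,j}B_i w_{i,j}^* \subseteq B_i$, is where the real technical work of the proof lies.
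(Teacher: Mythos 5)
There is a genuine gap: your proposal defers the entire difficulty to a ``preparatory lemma'' that you presume exists but do not state precisely or prove, and the lemma you gesture at is not what the paper establishes and would not obviously suffice. After the Pimsner--Popa double expansion you have $v \approx \sum_{\alpha,\beta}(n_\beta\otimes m_\alpha)b_{\alpha,\beta}$ with basis elements that have no intertwining properties whatsoever, and the single coupled condition $vBv^*\subseteq B$ gives constraints only on the aggregate; nothing in this setup forces the individual tensor legs to be close to intertwiners. A lemma of the form ``a partial isometry that approximately intertwines $B_i$ is $\|\cdot\|_2$-close to an exact intertwiner'' is exactly the kind of local perturbation statement that fails to capture the global obstruction here --- Example \ref{gpexm4.2} in the paper shows that even exact projections in $B'\cap\langle M,e_B\rangle$ commuting with $B$ and subequivalent to $e_B$ need not come from intertwiners without an additional tracial condition, so any decoupling argument must track that tracial data, which your expansion does not.

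The paper's route is entirely different and is built to handle precisely this point. It passes to the basic construction, replaces $v$ by the projection $v^*e_Bv$, shows this is central in $(B'\cap\langle M,e_B\rangle)v^*v$ (Lemma \ref{gplem3.2}, using $B'\cap M\subseteq B$), extends it to a canonical central projection $P_v$ (Definition \ref{DefPv}, Lemma \ref{gplem4.3}), and then proves the key inequality $P_v\le Q_1\otimes Q_2$ (Lemma \ref{gplem4.7}), where $Q_i$ is built from intertwiners of $B_i$. That inequality rests on Theorem \ref{gpthm4.1} (a characterization of which projections arise from intertwiners, with a tracial hypothesis), on the order properties of the pull-down map (Lemmas \ref{gplem2.2}--\ref{gplem2.5}), and on a direct-integral/measure-theoretic argument; only then does one approximate $P_v$ by elementary tensors of intertwiner projections and push back down to $M$. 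Your treatment of the groupoid normalizer refinement is also too optimistic: running the argument for $v$ and $v^*$ and ``symmetrizing'' does not produce two-sided $w_{i,j}$, because there is no mechanism to match the two approximations term by term. The paper instead decomposes each intertwiner into two-sided and strictly one-sided pieces (Lemma \ref{gplem5.3}), proves that a tensor product involving a strictly one-sided factor is strictly one-sided (Lemma \ref{gplem5.4}, via perturbation theory of subalgebras), and shows the strictly one-sided terms are annihilated by $P_v$ (Lemma \ref{gplem5.2}), so they can simply be discarded. None of this structure is present in your outline.
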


The intertwiner form of Theorem \ref{Main} is established as Theorem \ref{gpthm4.8} and additional analysis in Section \ref{gpsec5} enables us to deduce the groupoid normalizer form of Theorem \ref{Main} as Theorem \ref{gpthm5.5}. For the remainder of the introduction we give a summary of the main steps used to establish these results and where they can be found in the paper.

Given inclusions $B_i\subseteq M_i$ of finite von Neumann algebras with $B_i'\cap M_i\subseteq B_i$, write $B\subset M$ for the tensor product inclusion $B_1\vnotimes B_2\subseteq M_1\vnotimes M_2$. Let $v\in\oneGN_M(B)$. Then the element $v^*e_Bv$ is a projection in the basic construction algebra $\langle M,e_B\rangle$, the properties of which are recalled in  Section \ref{gpsec2}. Section \ref{gpsec3} discusses the properties of these projections in the basic construction arising from intertwiners. In particular, we show that the projection $v^*e_Bv$ is central in the cutdown $(B'\cap\langle M,e_B\rangle)v^*v$ (Lemma \ref{gplem3.2}) and construct an explicit projection $P_v\in Z(B'\cap\langle M,e_B\rangle)$ with $P_vv^*v=v^*e_Bv$. We need to construct this projection explicitly rather than appeal to general theory, as its properties (established in Lemma \ref{gplem4.5}) are crucial subsequently. 

Since the basic construction factorizes as a tensor product $\langle M,e_B\rangle\cong\langle M_1,e_{B_1}\rangle\vnotimes\langle M_2,e_{B_2}\rangle$, Tomita's commutation theorem gives
\begin{equation}\label{TBC}
Z(B'\cap \langle M,e_B\rangle)\cong Z(B_1'\cap\langle M_1,e_{B_1}\rangle)\vnotimes Z(B_2'\cap\langle M_2,e_{B_2}\rangle).
\end{equation}
For each $i=1,2$, let $Q_i$ denote the supremum of all projections in $Z(B_i'\cap\langle M_i,e_{B_i}\rangle)$ of the form $\sum_jw_{i,j}^*e_{B_i}w_{i,j}$, where the $w_{i,j}$ lie in $\oneGN_{M_i}(B_i)$ and satisfy $w_{i,j}w_{i,k}^*=0$ when $j\neq k$. If we can show that 
\begin{equation}\label{IntroPLeqQ}
P_v\leq Q_1\otimes Q_2,
\end{equation}
then it will follow that we can approximate $P_v$ in $L^2(\langle M,e_B\rangle)$ by projections of the form $\sum_j(w_{1,j}\otimes w_{2,j})^*e_B(w_{1,j}\otimes w_{2,j})$ for intertwiners $w_{i,j}\in\oneGN_{M_i}(B_i)$. To do this, we use the fact that projections in the tensor product (\ref{TBC}) of \emph{abelian} von Neumann algebras can be approximated by sums of elementary tensors of projections, and so it is crucial that the original projection $v^*e_Bv$ be \emph{central} in $(B'\cap\langle M,e_B\rangle)v^*v$, for which the hypothesis $B'\cap M\subseteq B$ is necessary. Finally, we push the approximation for $P_v$ down to $M$ and obtain the required approximation for $v$ in $M$ (see Theorem \ref{gpthm4.8}).

Most of Section \ref{gpsec4} is taken up with establishing (\ref{IntroPLeqQ}). We give a technical result (Theorem \ref{gpthm4.1}), which in particular characterizes when a projection in the basic construction arises from an intertwiner. By applying Theorem \ref{gpthm4.1} to $P_v$ and the inclusion
$$
Z(B_1'\cap\langle M_1,e_{B_1}\rangle)\vnotimes B_2\subseteq Z(B_1'\cap\langle M_1,e_{B_1}\rangle)\vnotimes M_2,
$$
regarded as a direct integral of inclusions of finite von Neumann algebras, we are able to establish $P_v\leq 1\otimes Q_2$ in Lemma \ref{gplem4.7} and so (\ref{IntroPLeqQ}) follows by symmetry. It should be noted that  the introduction of the projections $Q_i$ is essential in order to make use of measure theory, particularly the uniqueness of product measures on $\sigma$--finite spaces, \cite[p. 312]{Roy}.  The canonical trace on the basic construction need not be a semifinite weight on $Z(B_i'\cap\langle M_i,e_{B_i}\rangle)$ but does have this property on the compression 
$Z(B_i'\cap\langle M_i,e_{B_i}\rangle)Q_i$ where it can be treated as a measure (see Lemma \ref{gplem4.4} and the discussion preceding Definition \ref{DefQ}). The remaining difficulty is to check that the projection $P_v$ satisfies the hypotheses of Theorem \ref{gpthm4.1}, for which we require certain  order properties of the pull-down map on the basic construction. These are described in the next section, in which we also set out our notation, review 
the properties of the basic construction, and establish some technical lemmas. Finally, the paper ends with Section \ref{gpsec5}, which handles the additional details required to deduce the groupoid normalizer result (Theorem \ref{gpthm5.5}) from our earlier work.

\textbf{Acknowledgment: } The work in this paper originated during the \emph{Workshop in Analysis and Probability}, held at Texas A\& M University during Summer 2007.  It is a pleasure to express our thanks to both the organizers of the workshop and to the NSF for providing financial support to the workshop.

\section{Notation and preliminaries}\label{gpsec2}

\indent 

Throughout the paper, all von Neumann algebras are assumed to have separable preduals. The basic object of study in this paper is an inclusion $B\subseteq M$ of finite von Neumann algebras, where $M$ is equipped with a faithful normal trace $\tau$ satisfying $\tau(1)=1$. We always assume that $M$ is standardly represented on the Hilbert space $L^2(M,\tau)$, or simply $L^2(M)$. The letter $\xi$ is reserved for the image of $1\in M$ in this Hilbert space, and $J$ will denote the isometric conjugate linear operator on $L^2(M)$ defined on $M\xi$ by $J(x\xi) = x^*\xi$, $x\in M$, and extended by continuity to $L^2(M)$ from this dense subspace. Then $L^2(B)$ is a closed subspace of $L^2(M)$, and $e_B$ denotes the projection of $L^2(M)$ onto $L^2(B)$, called the Jones projection. The von~Neumann algebra generated by $M$ and $e_B$ is called the basic construction and is denoted by $\langle M,e_B\rangle$, \cite{Chr,Jones.Index}.  Let ${\bb E}_B$ denote the unique trace preserving conditional expectation of $M$ onto $B$. In the next proposition we collect together standard properties of $e_B,\mathbb E_B$ and $\langle M,e_B\rangle$ from \cite{Jones.Index,Popa.Entropy,Jones.SubfactorsBook,Sinclair.MasaBook}.

\begin{pro}\label{ListOfFacts}
\begin{enumerate}[(i)]
 \item $e_B(x\xi) = {\bb E}_B(x)\xi,\ x\in M$.
\item $e_Bxe_B = {\bb E}_B(x)e_B = e_B{\bb E}_B(x), \ x\in M$.
\item $M\cap \{e_B\}'=B$.
\item $\langle M,e_B\rangle'=JBJ,\quad Z(\langle M,e_B\rangle)=JZ(B)J$.
\item $e_B$ has central support 1 in $\langle M,e_B\rangle$.
\item $\mathrm{Span}\{xe_By\colon \ x,y\in M\}$ generates a $*$-strongly dense subalgebra, denoted $Me_BM$, of $\langle M,e_B\rangle$.
\item $x\mapsto e_Bx$ and $x\mapsto xe_B$ are injective maps for $x\in M$.
\item $Me_B$ and $e_BM$ are $*$-strongly dense in $\langle M,e_B\rangle e_B$ and $e_B\langle M,e_B\rangle$ respectively.
\item $e_B\langle M,e_B\rangle e_B = Be_B = e_BB$.
\item $(Me_BM) \langle M,e_B\rangle (Me_BM)\subseteq Me_BM$.
\item There is a unique faithful normal semifinite trace $\Tr$ on $\langle M,e_B\rangle$ satisfying
\begin{equation}\label{gpeq2.1}
\Tr(xe_By) = \tau(xy),\qquad x,y\in M.
\end{equation}
This trace is given by the formula
\begin{equation}\label{DefTrace}
\Tr(t)=\sum_{i=1}^\infty\left<tJv_i^*\xi,Jv_i^*\xi\right>,t\in\langle M,e_B\rangle^+,
\end{equation}
where the $v_i$'s are partial isometries in $\langle M,e_B\rangle$ satisfying $\sum_{i=1}^\infty v_i^*e_Bv_i=1$.
\item The algebra $Me_BM$ is $\|\cdot\|_{2,\Tr}$-dense in $L^2(\langle M,e_B\rangle,\Tr)$ and $\|\cdot\|_{1,\Tr}$-dense in $L^1(\langle M,e_B\rangle,\Tr)$.
\item Given inclusions $B_i\subset M_i$ of finite von Neumann algebras for $i=1,2$, the basic construction $\langle M_1\vnotimes M_2,e_{B_1\vnotimes B_2}\rangle$ is isomorphic to $\langle M_1,e_{B_1}\rangle\vnotimes\langle M_2,e_{B_2}\rangle$. Under this isomorphism, the canonical trace $\Tr$ on $\langle M_1\vnotimes M_2,e_{B_1\vnotimes B_2}\rangle$ is given by $\Tr_1\otimes\Tr_2$, where $\Tr_i$ is the canonical trace on $\langle M_i,e_{B_i}\rangle$.
\item There is a well defined map $\Psi:Me_BM\rightarrow M$, given by
\begin{equation}\label{gpeq2.2}
 \Psi(xe_By) = xy,\qquad x,y\in M.
\end{equation}
This is the pull down map of \cite{Popa.Entropy}, where it was shown to extend to a contraction from $L^1(\langle M,e_B\rangle,\Tr)$ to $L^1(M,\tau)$. 
\end{enumerate}
\end{pro}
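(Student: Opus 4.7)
The statement is a compendium of standard facts about the basic construction, almost all of which appear (in various combinations) in \cite{Jones.Index,Popa.Entropy,Jones.SubfactorsBook,Sinclair.MasaBook}, so my plan is not to give a single unified proof but rather to indicate the key one or two ingredients behind each item and then invoke the cited sources. The starting point is (i): the trace-preserving conditional expectation $\mathbb E_B$ exists because $\tau$ is a faithful normal trace, and by the uniqueness of the orthogonal projection onto $L^2(B)$, any vector of the form $b\xi$ with $b\in B$ achieving $\tau(bc^*)=\tau(xc^*)$ for all $c\in B$ must be $\mathbb E_B(x)\xi$; but this is exactly the characterization of the closest point in $L^2(B)$ to $x\xi$. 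All of (ii), (iii), (vii), (ix) are then direct consequences obtained by manipulating $x\xi$ on the dense subspace $M\xi\subset L^2(M)$ and using the faithfulness of $\tau$ (so that $x\xi=0$ forces $x=0$).

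Items (iv) and (v) rely on the commutation theorem for standard form: one observes $JbJ\in M'$ for $b\in B$ and checks $JbJ$ commutes with $e_B$ (since $J$ preserves $L^2(B)$); the reverse inclusion uses that $\xi$ is cyclic and separating for $M$ on $L^2(M)$ together with (iii). The central support statement in (v) follows because the smallest central projection in $\langle M,e_B\rangle$ dominating $e_B$ fixes $\xi=e_B\xi$, hence equals $1$. For (vi), (viii), (x), (xii), one uses (ii) to show that $Me_BM$ is a two-sided ideal in the algebra it generates, and then that the double commutant of $Me_BM$ is $\langle M,e_B\rangle$; $L^2$-density then follows from Kaplansky-type approximation together with (xi).

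The main content is (xi): the existence of the faithful normal semifinite trace $\Tr$ is the substantive statement, proved by defining $\Tr$ on the dense $*$-subalgebra $Me_BM$ via (\ref{gpeq2.1}), checking well-definedness using (ii) and the trace property of $\tau$, extending by normality, and verifying semifiniteness via a partition of unity argument that produces the partial isometries $v_i$ in (\ref{DefTrace}). The formula in (\ref{DefTrace}) is then obtained by evaluating $\Tr(v_i^*e_Bv_i\cdot t\cdot v_i^*e_Bv_i)$ and summing, using that $\sum v_i^*e_Bv_i=1$. Item (xiii) is the natural identification of basic constructions under tensor products: one checks that the Jones projection for $B_1\vnotimes B_2\subset M_1\vnotimes M_2$ coincides with $e_{B_1}\otimes e_{B_2}$ on $L^2(M_1)\otimes L^2(M_2)=L^2(M_1\vnotimes M_2)$, and then the identification $\langle M_1\vnotimes M_2,e_{B_1\vnotimes B_2}\rangle\cong\langle M_1,e_{B_1}\rangle\vnotimes\langle M_2,e_{B_2}\rangle$ follows from (iv) together with the commutation theorem; the trace identification follows by evaluating both sides on elementary tensors $x_1e_{B_1}y_1\otimes x_2e_{B_2}y_2$.

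Finally, (xiv) requires checking that $\Psi$ is well-defined on $Me_BM$ (the key identity $e_Bxe_B=\mathbb E_B(x)e_B$ shows this is compatible with the multiplicative structure, cf.\ \cite{Popa.Entropy}), and then the extension to $L^1$ with $\|\Psi\|\leq 1$ uses that $\Psi$ is the predual of the inclusion $M\hookrightarrow\langle M,e_B\rangle$ via the duality $L^1(\langle M,e_B\rangle,\Tr)\cong\langle M,e_B\rangle_*$ and $L^1(M,\tau)\cong M_*$. The hardest step in the whole list, and the one requiring the most care, is the construction and well-definedness of the semifinite trace $\Tr$ in (xi), since it underpins the $L^1$-$L^2$ theory used throughout the paper.
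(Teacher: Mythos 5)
Your proposal is correct and matches the paper's treatment: the paper offers no proof of Proposition \ref{ListOfFacts}, simply collecting these standard facts from \cite{Jones.Index,Popa.Entropy,Jones.SubfactorsBook,Sinclair.MasaBook}, and your sketch accurately outlines the standard arguments found in those references. The only point worth flagging is that the identity $Z(\langle M,e_B\rangle)=JZ(B)J$ in (iv) also uses $Z(M)\subseteq B$-independent reasoning via $\langle M,e_B\rangle\cap\langle M,e_B\rangle'=M'\cap JBJ$ together with $JMJ=M'$, but this is a routine supplement to what you wrote.
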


Using Part (xii) of the previous proposition, the equation
\begin{equation}\label{gpeq2.3}
\Tr((xe_By)z) = \tau((xy)z) = \tau(\Psi(xe_By)z),\qquad x,y,z\in M,
\end{equation}
shows that $\Psi$ is the pre-adjoint of the identity embedding $M\hookrightarrow \langle M,e_B\rangle$ and is, in particular, positive. The basic properties of $\Psi$ are set out in \cite{Popa.Entropy}, but we will need more detailed information on this map than is currently available in the literature. We devote much of this section to obtaining further properties of $\Psi$, the main objective being to apply them in Lemma \ref{gplem4.6}.

In the next three lemmas, the inclusion $B\subset M$ is always of arbitrary finite von Neumann algebras with a fixed faithful normalized normal trace $\tau$ on $M$, inducing the trace $\Tr$ on $\langle M,e_B\rangle$. 
\begin{lem}\label{gplem2.2}
 Let $x\in L^1(\langle M,e_B\rangle)^+ \cap \langle M,e_B\rangle$. If $\Psi(x)\in L^1(M)\cap M$, then $\Psi(x)\ge x$.
\end{lem}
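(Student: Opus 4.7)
The plan is to first establish the inequality on the $*$-strongly dense subalgebra $Me_BM$ and then extend by $L^1$-approximation, noting that $\Psi(x)-x$ is a bounded element of $\langle M,e_B\rangle$ (since $\Psi(x)\in M$ by hypothesis), so operator positivity is a meaningful notion. The key algebraic observation is that every positive $x\in Me_BM$ admits a finite decomposition $x=\sum_k c_k^*\,e_B\,c_k$ with $c_k\in M$. Starting from $x=y^*y$ with $y=\sum_{i=1}^n a_ie_Bb_i$, Proposition \ref{ListOfFacts}(ii) gives
\[
x=\sum_{i,j=1}^n b_i^*\,\mathbb{E}_B(a_i^*a_j)\,e_Bb_j.
\]
The $n\times n$ matrix $A=(\mathbb{E}_B(a_i^*a_j))$ is positive in $M_n(B)$, so it factors as $D^*D$ with $D=(D_{ki})\in M_n(B)$; setting $c_k=\sum_j D_{kj}b_j\in M$ yields the desired form. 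Because $\Psi$ is $M$-bimodular with $\Psi(e_B)=1$, for such $x$ one has
\[
\Psi(x)-x=\sum_k c_k^*(1-e_B)c_k\ \geq\ 0.
\]

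For general $x$ as in the statement, set $r=x^{1/2}\in\langle M,e_B\rangle$; since $\|r\|_{2,\Tr}^2=\Tr(x)<\infty$, one has $r\in L^2(\langle M,e_B\rangle,\Tr)$. Using Part (xii) of Proposition \ref{ListOfFacts}, choose $y_n\in Me_BM$ with $y_n\to r$ in $\|\cdot\|_{2,\Tr}$, and set $x_n=y_n^*y_n\in(Me_BM)^+$. A standard H\"older estimate gives $x_n\to x$ in $\|\cdot\|_{1,\Tr}$, and the $L^1$-contractivity of $\Psi$ then forces $\Psi(x_n)\to\Psi(x)$ in $L^1$. By the previous step, $\Psi(x_n)-x_n\geq 0$ for each $n$.

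To pass the inequality to the limit, test the bounded self-adjoint element $\Psi(x)-x\in\langle M,e_B\rangle$ against $\Tr$-finite positive operators: for any such $p$,
\[
\Tr\bigl((\Psi(x)-x)p\bigr)=\lim_{n\to\infty}\Tr\bigl((\Psi(x_n)-x_n)p\bigr)\ \geq\ 0,
\]
with continuity following from $|\Tr(zp)|\leq\|p\|_\infty\|z\|_{1,\Tr}$. Semifiniteness of $\Tr$ then forces $\Psi(x)-x\geq 0$ as a bounded operator.

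The main obstacle I anticipate is the decomposition step: finding the normal form $\sum c_k^*\,e_B\,c_k$ for positive elements of $Me_BM$, which relies on factoring a positive matrix over $B$ as $D^*D$. Once this is in hand, the inequality on the dense subalgebra follows from the trivial bound $e_B\leq 1$, and the extension to $L^1$ is a routine density-and-duality argument.
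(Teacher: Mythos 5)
Your core algebraic idea is sound and genuinely different from the paper's argument. You prove positivity on the dense subalgebra by writing $y_n^*y_n=\sum_k c_k^*e_Bc_k$ (via factoring the positive matrix $(\mathbb{E}_B(a_i^*a_j))$ over $B$) so that $\Psi(y_n^*y_n)-y_n^*y_n=\sum_kc_k^*(1-e_B)c_k\ge0$, and then approximate; the paper instead proves the inequality in one step, testing against vectors $y\xi$ and expanding $\langle\Psi(x)y\xi,y\xi\rangle=\Tr(y^*xy)=\sum_{i}\langle xy\xi_i,y\xi_i\rangle$ with $\xi_1=\xi$, so that the desired term is the first summand of a series of nonnegative terms. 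Your decomposition step and the $L^2\to L^1$ approximation $x_n=y_n^*y_n\to x$ in $\|\cdot\|_{1,\Tr}$ are correct (the only quibble is that your argument justifies the normal form $\sum_kc_k^*e_Bc_k$ for elements of the form $y^*y$ with $y\in Me_BM$, not for arbitrary positive elements of $Me_BM$; but that is all you use).

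The genuine gap is in the limiting step. The estimate $|\Tr(zp)|\le\|p\|_\infty\|z\|_{1,\Tr}$ requires $z\in L^1(\langle M,e_B\rangle,\Tr)$, but your $z$ contains the difference $\Psi(x_n)-\Psi(x)$, which converges to $0$ only in $L^1(M,\tau)$: the pull-down map is a contraction into $L^1(M,\tau)$, not into $L^1(\langle M,e_B\rangle,\Tr)$, and bounded elements of $M$ are generically not $\Tr$-integrable (for $B=\mathbb{C}1$ one has $\langle M,e_B\rangle=B(L^2(M))$ and $\Tr$ restricted to $M^+$ takes only the values $0$ and $\infty$). So for a general $\Tr$-finite positive $p$ you cannot pass to the limit in $\Tr(\Psi(x_n)p)$. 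The repair is to test only against $p=ye_By^*$ with $y\in M$: for bounded $m\in M$ one has $\Tr(mye_By^*)=\tau(myy^*)$, so $\Tr(\Psi(x_n)p)\to\Tr(\Psi(x)p)$ follows from $\|\Psi(x_n)-\Psi(x)\|_{1,\tau}\to0$ together with the boundedness of $yy^*$, while $\Tr(x_np)\to\Tr(xp)$ uses the $\|\cdot\|_{1,\Tr}$-convergence you already have. These test operators suffice because $\Tr\bigl((\Psi(x)-x)ye_By^*\bigr)=\langle(\Psi(x)-x)y\xi,y\xi\rangle$ and $M\xi$ is dense in $L^2(M)$, so nonnegativity of these quantities yields $\Psi(x)-x\ge0$ for the bounded self-adjoint operator $\Psi(x)-x$. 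With this modification your proof is complete, and in substance it has rejoined the paper's: testing against $ye_By^*$ is exactly testing against the vectors $y\xi$.
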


\begin{proof}
It suffices to show that
\begin{equation}\label{gpeq2.4}
 \langle \Psi(x) y\xi,y\xi\rangle \ge \langle xy\xi, y\xi\rangle, \qquad y\in M.
\end{equation}
The maximality argument, preceding \cite[Lemma 4.3.4]{Sinclair.MasaBook}, to establish part (xi) of Proposition \ref{ListOfFacts} can be easily modified to incorporate the requirement that $v_1=1$. Thus there are vectors $\xi_i = Jv^*_i\xi\in L^2(M)$ so that \eqref{DefTrace} becomes
\begin{equation}\label{gpeq2.7}
\Tr(t) = \sum^\infty_{i=1} \langle t\xi_i,\xi_i\rangle,\qquad t\in \langle M,e_B\rangle^+,
\end{equation}
where $\xi_1=\xi$. Now, for $y\in M$, we may use the $M$-modularity of $\Psi$ to write
\begin{equation}\label{gpeq2.8}
\langle\Psi(x)y\xi,y\xi\rangle = \langle\Psi(y^*xy)\xi,\xi\rangle= \tau(\Psi(y^*xy))=\Tr(y^*xy).
\end{equation}
It follows from \eqref{gpeq2.7} and \eqref{gpeq2.8} that
\begin{equation}\label{gpeq2.9}
 \langle\Psi(x)y\xi,y\xi\rangle = \langle xy\xi,y\xi\rangle + \sum^\infty_{i=2} \langle xy\xi_i, y\xi_i\rangle\ge \langle xy\xi, y\xi\rangle,\qquad y\in M,
\end{equation}
establishing that $\Psi(x)\ge x$.
\end{proof}

We now extend this result to tensor products.  Let $N$ be a semifinite von Neumann algebra with a specified faithful normal semifinite trace $\mathrm{TR}$. In \cite{Effros.ApproximationProperties}, Effros and Ruan identified the predual of a tensor product of von~Neumann algebras $X$ and $Y$ by $(X\vnotimes Y)_* = X_* \otimes_{op} Y_*$, the operator space projective tensor product of the preduals. In the presence of traces, this identifies $L^1(X\vnotimes Y)$ with $L^1(X)\otimes_{op} L^1(Y)$, so $I\otimes \Psi$ is well defined, positive, and bounded from $L^1(N\vnotimes \langle M,e_B\rangle,\mathrm{TR}\otimes\Tr)$ to $L^1(N\vnotimes M,\mathrm{TR}\otimes\tau)$, being the pre-adjoint of the identity embedding $N\vnotimes M\hookrightarrow N\vnotimes \langle M,e_B\rangle$. Following \cite[Chapter IX]{Takesaki.2}, we will always assume that $N$ is faithfully represented on $L^2(N,\mathrm{TR})$, for which $\text{span}\{y\in N\colon \ \mathrm{TR}(y^*y)<\infty\}$ is a dense subspace.

\begin{lem}\label{gplem2.3}
Let $x\in L^1(N\vnotimes \langle M,e_B\rangle)^+\cap (N\vnotimes \langle M,e_B\rangle)$. If $(I\otimes \Psi)(x) \in L^1(N\vnotimes M)\cap (N\vnotimes M)$, then $(I\otimes \Psi)(x)\ge x$.
\end{lem}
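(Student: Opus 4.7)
The plan is to lift the argument of Lemma~\ref{gplem2.2} to the tensor setting.  Both $x$ and $(I\otimes\Psi)(x)$ act as bounded operators on $L^2(N)\otimes L^2(M)$, so it suffices to verify
\begin{equation*}
\langle((I\otimes\Psi)(x)-x)\eta,\eta\rangle\ge 0
\end{equation*}
for $\eta$ ranging over a dense set of vectors.  I take $\eta=a\otimes b\xi$, where $a\in N$ satisfies $\mathrm{TR}(a^*a)<\infty$ and $b\in M$; these are dense by the hypothesis on $N$ together with the cyclicity of $\xi$ for $M$ on $L^2(M)$.  The $(N\vnotimes M)$-modularity of $I\otimes\Psi$ then lets me absorb the factor $1\otimes b$: replacing $x$ by $x'=(1\otimes b^*)x(1\otimes b)$, which remains positive in $N\vnotimes\langle M,e_B\rangle$ and has $(I\otimes\Psi)(x')\in(N\vnotimes M)\cap L^1$, reduces the problem to the case $\eta=a\otimes\xi$.

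For this reduced case, I plan to combine three ingredients.  First, the vector-state formula
\begin{equation*}
\langle S(a\otimes\xi),a\otimes\xi\rangle=(\mathrm{TR}\otimes\tau)((a^*\otimes 1)S(a\otimes 1)),\qquad S\in N\vnotimes M,
\end{equation*}
which is immediate on simple tensors and extends by normality.  Second, the pre-adjoint identity $(\mathrm{TR}\otimes\tau)\circ(I\otimes\Psi)=\mathrm{TR}\otimes\Tr$ on $L^1(N\vnotimes\langle M,e_B\rangle)$, inherited by tensoring $\tau\circ\Psi=\Tr$ (the scalar version used implicitly in \eqref{gpeq2.3}).  Third, the vector-state decomposition $\Tr=\sum_{i\ge 1}\langle\,\cdot\,\xi_i,\xi_i\rangle$ from \eqref{gpeq2.7}, normalised so that $\xi_1=\xi$.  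Stringing these together, using modularity once more to slide $a^*\otimes 1$ and $a\otimes 1$ through $I\otimes\Psi$, yields
\begin{align*}
\langle(I\otimes\Psi)(x')(a\otimes\xi),a\otimes\xi\rangle &=(\mathrm{TR}\otimes\Tr)((a^*\otimes 1)x'(a\otimes 1))\\
&=\sum_{i=1}^\infty\langle x'(a\otimes\xi_i),a\otimes\xi_i\rangle.
\end{align*}
The $i=1$ summand is exactly $\langle x'(a\otimes\xi),a\otimes\xi\rangle$, and the remaining summands are nonnegative by positivity of $x'$, giving the desired inequality.

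The principal technical care is forced by the semifiniteness of $\mathrm{TR}$: $a$ must be confined to the square-integrable part of $N$, and both the vector-state formula and the expansion $\mathrm{TR}\otimes\Tr=\sum_i\mathrm{TR}\otimes\omega_{\xi_i}$ have to be established first on simple tensors and then extended by $\sigma$-weak continuity/normality.  Once this bookkeeping is in place the proof is a direct transport of the scalar argument of Lemma~\ref{gplem2.2}.
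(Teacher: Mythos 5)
The reduction at the start of your argument has a genuine gap: the set of simple tensors $a\otimes b\xi$ is \emph{not} dense in $L^2(N,\mathrm{TR})\otimes_2 L^2(M,\tau)$; only its linear span is. For a bounded self-adjoint operator $T$, knowing $\langle T\eta,\eta\rangle\ge 0$ for every simple tensor $\eta$ does not imply $T\ge 0$ --- this is precisely the distinction between block-positive and positive operators (the flip $\zeta\otimes\eta\mapsto\eta\otimes\zeta$ on $\mathbb C^2\otimes\mathbb C^2$ satisfies $\langle F(\zeta\otimes\eta),\zeta\otimes\eta\rangle=|\langle\zeta,\eta\rangle|^2\ge 0$ on all simple tensors yet has $-1$ as an eigenvalue). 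Moreover your reduction step, replacing $x$ by $(1\otimes b^*)x(1\otimes b)$, is tailored to a single elementary tensor and cannot absorb a genuine sum $\sum_i a_i\otimes b_i\xi$, so what your argument establishes is only block-positivity of $(I\otimes\Psi)(x)-x$, not the claimed operator inequality.

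The computation that follows (the vector-state formula, the pre-adjoint identity $(\mathrm{TR}\otimes\tau)\circ(I\otimes\Psi)=\mathrm{TR}\otimes\Tr$, and the expansion of $\Tr$ over the vectors $\xi_i$ with $\xi_1=\xi$) is sound, and the gap is repairable within your framework: test instead against vectors $c(a\otimes\xi)$ with $c\in N\vnotimes M$ and $\mathrm{TR}(a^*a)<\infty$, which do form a dense set (any $\sum_i a_i\otimes b_i\xi$ with square-integrable $a_i$ can be written this way by factoring $a_i=c_ia$ with $a=(\sum_i a_i^*a_i)^{1/2}$ and contractions $c_i\in N$), and then use modularity to replace $x$ by $c^*xc$ before running your trace expansion. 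The paper takes a shorter route that avoids redoing the scalar argument altogether: it tests against general sums $\sum_i y_i\otimes z_i\xi$ with $y_i\in pNp$ for a finite projection $p\in N$, observes that $I\otimes\Psi$ restricted to $L^1(pNp\vnotimes\langle M,e_B\rangle)$ is exactly the pull-down map of the \emph{finite} inclusion $pNp\vnotimes B\subseteq pNp\vnotimes M$, and invokes Lemma \ref{gplem2.2} for that inclusion to reach a contradiction.
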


\begin{proof}
Suppose that the result is not true. Then we may find a finite projection $p\in N$, elements $y_i\in pNp$ and $z_i\in M$, $1\le i\le k$, so that
\begin{equation}\label{gpeq2.10}
 \left\langle(x-(I\otimes\Psi)(x)) \sum^k_{i=1} y_i\otimes z_i\xi, \sum^k_{i=1} y_i\otimes z_i\xi \right\rangle > 0,
\end{equation}
since such sums $\sum_{i=1}^ky_i\otimes z_i\xi$ are dense in $L^2(N,\mathrm{TR})\otimes_2 L^2(M,\tau)$. 
Then the inequality
\begin{equation}\label{gpeq2.11}
 (I\otimes\Psi) \left((p\otimes 1)x(p\otimes 1)\right) \ge (p\otimes 1)x(p\otimes 1)
\end{equation}
fails. The element on the left of \eqref{gpeq2.11} is $(p\otimes 1)\left((I\otimes \Psi)(x)\right) (p\otimes 1)$, and so is bounded by hypothesis. The restriction of $I\otimes\Psi$ to $L^1(pNp\vnotimes \langle M,e_B\rangle)$ is the pull down map for the inclusion $pNp \vnotimes B\subseteq pNp\vnotimes M$ of finite von~Neumann algebras with basic construction $pNp\vnotimes \langle M,e_B\rangle$. The failure of \eqref{gpeq2.11} then  contradicts Lemma \ref{gplem2.2} applied to this inclusion, establishing that $(1\otimes\Psi)(x)\ge x$.
\end{proof}

The next lemma completes our investigation of the order properties of pull down maps.

\begin{lem}\label{gplem2.4}
 If $x\in L^1(N\vnotimes \langle M,e_B\rangle)^+$ is unbounded, then so also is $(1\otimes\Psi)(x)$.
\end{lem}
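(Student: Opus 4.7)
My plan is to establish the contrapositive: assume $(1\otimes\Psi)(x)\in L^1(N\vnotimes M)\cap (N\vnotimes M)$, and deduce that $x$ itself is bounded, i.e., $x\in N\vnotimes\langle M,e_B\rangle$. The idea is to truncate $x$ through its spectral projections so that Lemma \ref{gplem2.3} applies, and then push the resulting pointwise bound through the truncation using that the right-hand side $(1\otimes\Psi)(x)$ is already bounded.

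Concretely, for each $n\in\mathbb N$ let $p_n=\chi_{[0,n]}(x)$ be the spectral projection of $x$, which lies in $N\vnotimes\langle M,e_B\rangle$. Set $x_n=p_nxp_n=xp_n$. Then $x_n$ is a positive element bounded in norm by $n$, so it belongs to $L^1(N\vnotimes\langle M,e_B\rangle)^+\cap(N\vnotimes\langle M,e_B\rangle)$. Moreover $0\le x_n\le x$ in $L^1$, and since $1\otimes\Psi$ is positive on $L^1$ we get
\begin{equation}
0\le (1\otimes\Psi)(x_n)\le (1\otimes\Psi)(x).
\end{equation}
By hypothesis $(1\otimes\Psi)(x)$ lies in the von Neumann algebra $N\vnotimes M$; as the positive $L^1$-element $(1\otimes\Psi)(x_n)$ is dominated by it, $(1\otimes\Psi)(x_n)$ is itself bounded with operator norm at most $\|(1\otimes\Psi)(x)\|$. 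Thus the hypotheses of Lemma \ref{gplem2.3} are met, yielding
\begin{equation}
x_n\le (1\otimes\Psi)(x_n)\le (1\otimes\Psi)(x),\qquad n\in\mathbb N.
\end{equation}

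Finally, I invoke the spectral theorem: if $x$ were unbounded, then for any $C>0$ the spectral projection $\chi_{(C,\infty)}(x)$ would be nonzero, so $\sup_n\|x_n\|=\sup_n\sup(\mathrm{spec}(x)\cap[0,n])=+\infty$. But the uniform bound $\|x_n\|\le\|(1\otimes\Psi)(x)\|$ derived above forbids this, forcing $x$ to be bounded. This completes the contrapositive and hence the lemma.

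The only subtle point, and the one I anticipate as the main obstacle, is verifying the two membership conditions needed to apply Lemma \ref{gplem2.3} to $x_n$: that $x_n$ is bounded in $N\vnotimes\langle M,e_B\rangle$ (handled by the spectral truncation) and that $(1\otimes\Psi)(x_n)$ is bounded in $N\vnotimes M$ (handled by the domination $0\le x_n\le x$ together with positivity of the pull-down on $L^1$ and the standing hypothesis on $(1\otimes\Psi)(x)$). Once these are in place the proof is essentially a spectral-theoretic truncation argument.
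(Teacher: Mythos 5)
Your proof is correct and follows essentially the same route as the paper's: truncate $x$ by its spectral projections $p_n$, use positivity of the pull-down (as a pre-adjoint of a positive map) to see that $(1\otimes\Psi)(p_nx)$ is dominated by the bounded element $(1\otimes\Psi)(x)$, apply Lemma \ref{gplem2.3} to the truncation, and conclude from the uniform bound that $x$ is bounded. The only cosmetic difference is that you write $p_nxp_n$ where the paper writes $p_nx$; these coincide since $p_n$ commutes with $x$.
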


\begin{proof}
Suppose that $(1\otimes\Psi)(x)$ is bounded. Following \cite[Section IX.2]{Takesaki.2}, we may regard $x$ as a self-adjoint positive densely defined operator on $L^2(N\vnotimes\langle M,e_B\rangle)$. For $n\ge 1$, let $p_n\in N$ be the spectral projection of $x$ for the interval $[0,n]$. Then $p_nx\le x$, so $(I\otimes\Psi(p_nx))\le (I\otimes\Psi)(x)$, since $I\otimes \Psi$ is the pre-adjoint of a positive map. In particular, $I\otimes\Psi(p_nx)$ is bounded. By Lemma \ref{gplem2.3} applied to $p_nx$,
\begin{equation}\label{gpeq2.12}
 (I\otimes\Psi)(x) \ge(I\otimes\Psi)(p_nx) \ge p_nx.
\end{equation}
Since $n\ge 1$ was arbitrary, we conclude from \eqref{gpeq2.12} that $x$ is bounded, a contradiction which completes the proof.
\end{proof}

We note for future reference that these results are equally valid for pull down maps of the form $\Psi\otimes I$, due to symmetry. These lemmas will be used in  Section \ref{gpsec4} to derive an important inequality. The next lemma formulates exactly what will be needed.

\begin{lem}\label{gplem2.5}
 Let $B_i\subseteq M_i$, $i=1,2$, be inclusions of finite von Neumann algebras with pull down maps $\Psi_i$. Let $B\subseteq M$ be the inclusion $B_1\vnotimes  B_2\subseteq M_1\vnotimes M_2$. If $x\in L^1(\langle M,e_B\rangle)^+\cap \langle M,e_B\rangle$ is such that $(\Psi_1\otimes\Psi_2) (x) \in L^1(M)\cap M$ and the inequality
\begin{equation}\label{gpeq2.13}
 \|(\Psi_1\otimes\Psi_2)(x)\|\le 1
\end{equation}
is satisfied, then $(I\otimes\Psi_2)(x)\in L^1(\langle M_1,e_{B_1}\rangle \vnotimes 
M_2)^+\cap (\langle M_1,e_{B_1}\rangle\vnotimes M_2)$ and $\|(I\otimes \Psi_2)(x)\|\le 1$.
\end{lem}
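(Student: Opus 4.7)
The plan is to apply the symmetric ($\Psi\otimes I$) versions of Lemmas \ref{gplem2.3} and \ref{gplem2.4} noted in the remark after Lemma \ref{gplem2.4}, taking the inclusion to be $B_1\subseteq M_1$ with pull-down $\Psi_1$ and the ambient semifinite algebra to be $N=M_2$. Since $\Psi_2$ extends to a positive contraction $L^1(\langle M_2,e_{B_2}\rangle)\to L^1(M_2)$, the element $y:=(I\otimes\Psi_2)(x)$ automatically belongs to $L^1(\langle M_1,e_{B_1}\rangle\vnotimes M_2)^+$. It remains only to show that $y$ is bounded and that $\|y\|\le 1$.

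For boundedness I would argue by contradiction. Part (xiii) of Proposition \ref{ListOfFacts} identifies $\langle M,e_B\rangle$ with $\langle M_1,e_{B_1}\rangle\vnotimes\langle M_2,e_{B_2}\rangle$, under which the inclusion $M\hookrightarrow\langle M,e_B\rangle$ factors as a tensor product of the two individual inclusions. Since $\Psi$ is the pre-adjoint of this inclusion, and the Effros--Ruan identification of tensor product preduals used just before Lemma \ref{gplem2.3} converts tensor products of pre-adjoints into pre-adjoints of tensor products, one has $\Psi_1\otimes\Psi_2=(\Psi_1\otimes I)\circ(I\otimes\Psi_2)$ as bounded maps between the appropriate $L^1$ spaces. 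If $y$ were unbounded, then the $\Psi\otimes I$ version of Lemma \ref{gplem2.4} applied to $B_1\subseteq M_1$ with $N=M_2$ would force $(\Psi_1\otimes I)(y)$ to be unbounded; but $(\Psi_1\otimes I)(y)=(\Psi_1\otimes\Psi_2)(x)$ lies in $M_1\vnotimes M_2$ by hypothesis, a contradiction. Hence $y\in \langle M_1,e_{B_1}\rangle\vnotimes M_2$.

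Once boundedness is established, the norm bound follows from the $\Psi\otimes I$ version of Lemma \ref{gplem2.3} applied to $y$: the hypothesis that $(\Psi_1\otimes I)(y)=(\Psi_1\otimes\Psi_2)(x)$ belongs to $L^1(M_1\vnotimes M_2)\cap(M_1\vnotimes M_2)$ is exactly what is assumed, so the lemma yields the operator inequality $(\Psi_1\otimes\Psi_2)(x)\ge y\ge 0$. Taking norms gives $\|(I\otimes\Psi_2)(x)\|\le\|(\Psi_1\otimes\Psi_2)(x)\|\le 1$, completing the proof. The only substantive point is the factorization $\Psi_1\otimes\Psi_2=(\Psi_1\otimes I)\circ(I\otimes\Psi_2)$ for unbounded $L^1$ elements, which I expect to be the main technical step; it will follow by establishing the identity on the $\|\cdot\|_{1,\Tr}$-dense subspace spanned by elementary tensors $x_1e_{B_1}y_1\otimes x_2e_{B_2}y_2$ (using Part (xii) of Proposition \ref{ListOfFacts}) and then extending by continuity of each factor.
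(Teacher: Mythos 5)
Your proposal is correct and takes essentially the same route as the paper: the paper likewise uses the factorization $(\Psi_1\otimes I)\circ(I\otimes\Psi_2)=\Psi_1\otimes\Psi_2$ together with the symmetric version of Lemma \ref{gplem2.4} to get boundedness of $(I\otimes\Psi_2)(x)$, and then Lemma \ref{gplem2.3} to obtain $(\Psi_1\otimes\Psi_2)(x)\ge(I\otimes\Psi_2)(x)\ge 0$ and hence the norm bound. The only cosmetic differences are that the paper states the full chain $(\Psi_1\otimes\Psi_2)(x)\ge(I\otimes\Psi_2)(x)\ge x$ while you only need the first inequality, and that you explicitly flag the density argument for the factorization identity, which the paper takes for granted.
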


\begin{proof}
Using the isomorphism of Proposition \ref{ListOfFacts} (xiii), $\Psi_1\otimes \Psi_2$ is the pull down map for $\langle M,e_B\rangle$. Since $(\Psi_1\otimes I) ((I\otimes \Psi_2)(x)) = (\Psi_1\otimes \Psi_2)(x)$ is a bounded operator by hypothesis, it follows from Lemma \ref{gplem2.4} that $(I\otimes \Psi_2)(x)$ is also bounded in $\langle M_1,e_{B_1}\rangle \vnotimes M_2$. Thus the three operators $x$, $(I\otimes \Psi_2)(x)$ and $(\Psi_1\otimes \Psi_2)(x)$ are all bounded, and so we may apply Lemma \ref{gplem2.3} twice to the pull down maps $I\otimes \Psi_2$ and $\Psi_1\otimes I$ to obtain
\begin{equation}\label{gpeq2.14}
 (\Psi_1\otimes \Psi_2)(x) \ge (I\otimes \Psi_2)(x) \ge x.
\end{equation}
The result then follows from \eqref{gpeq2.14} and the hypothesis \eqref{gpeq2.13}.
\end{proof}

In the proof of Lemma \ref{gplem4.7}, we will need the following fact regarding inclusions of finite von Neumann algebras $B\subset M$ with $B'\cap M\subseteq B$. Here, and elsewhere in the paper, we consider inclusions induced by cut-downs. Recall that if $Q\subseteq N$ is an inclusion of von Neumann algebras and $q$ is a projection in $Q$, then
\begin{equation}\label{Cutdown}
(Q'\cap N)q=(qQq)'\cap (qNq),\quad Z((Q'\cap N)q)=Z((qQq)'\cap (qNq)),
\end{equation}
see, for example, \cite[Section 5.4]{Sinclair.MasaBook}.

 \begin{lem}\label{gplem4.4}
Let $B\subseteq M$ be a containment of finite von Neumann algebras such that $B'\cap M \subseteq B$. If $p\in M$ is a nonzero projection, then there exists a nonzero projection $q\in B$ which is equivalent to a subprojection of $p$. 
\end{lem}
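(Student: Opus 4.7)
The plan is to exploit the hypothesis through the inclusion $Z(M)\subseteq B'\cap M\subseteq B$, decompose $B\subseteq M$ as a direct integral of inclusions of finite factors, and reduce the statement to an elementary fibre-wise argument.

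First I would use the centre-valued trace $T_M\colon M\to Z(M)\subseteq B$ to recast the problem as finding a nonzero projection $q\in B$ with $T_M(q)\le T_M(p)$, since in a finite von Neumann algebra such an inequality is equivalent to $q$ being Murray--von Neumann equivalent to a subprojection of $p$. Writing $Z(M)\cong L^\infty(X,\mu)$, both $B$ and $M$ admit direct integral decompositions
\[M=\int_X^\oplus M_x\,d\mu(x),\quad B=\int_X^\oplus B_x\,d\mu(x),\quad p=\int_X^\oplus p_x\,d\mu(x),\]
with each $M_x$ a finite factor, $B_x'\cap M_x\subseteq B_x$ for $\mu$-a.e.\ $x$, and $S=\{x:p_x\ne 0\}$ of positive measure. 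The goal becomes constructing, measurably in $x\in S$, a nonzero projection $q_x\in B_x$ with $\tau_x(q_x)\le\tau_x(p_x)$, after which $q=\int_X^\oplus q_x\,d\mu(x)\in B$ satisfies $T_M(q)\le T_M(p)$ and hence is equivalent to a subprojection of $p$.

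The fibre-wise argument splits into two easy cases. If $M_x$ is of type $\mathrm{II}_1$, then any minimal projection $r\in B_x$ would satisfy $rM_xr=r(B_x'\cap M_x)r\subseteq rB_xr=\mathbb{C}r$ by the cut-down identity~\eqref{Cutdown}, contradicting that $rM_xr$ is itself a $\mathrm{II}_1$ factor; hence $B_x$ has no minimal projections, is of type $\mathrm{II}_1$, and contains projections of any trace in $(0,\tau_x(p_x)]$. If $M_x\cong\mathbb{M}_{n(x)}$, the same hypothesis forces $B_x'\cap M_x=Z(B_x)$, whence $B_x\subseteq M_x$ is a multiplicity-free subalgebra whose minimal projections have trace $1/n(x)\le\tau_x(p_x)$; any such minimal projection serves as $q_x$.

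The main obstacle is carrying out the fibre-wise selection measurably in $x$, so that the pointwise choices assemble into a genuine element of $B$. This is standard direct-integral bookkeeping once one notes that both the fibre subalgebras $B_x$ and the scalar bound $\tau_x(p_x)$ depend measurably on $x$, but it is the only step requiring genuine care; the conceptual content of the lemma is entirely captured by the two fibre-wise cases above.
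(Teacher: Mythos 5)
Your proposal is correct in substance and follows the alternative route that the paper explicitly mentions but declines to write out: immediately before the proof the authors remark that ``one can establish the lemma by taking a direct integral over the center.'' Your fibre-wise dichotomy is the right one, and both cases check out: minimality of $r\in B_x$ plus the cut-down identity forces $rM_xr=\mathbb{C}r$, which is impossible in a type $\mathrm{II}_1$ fibre, and in a matrix fibre the hypothesis kills all multiplicities, so minimal projections of $B_x$ have rank one and trace $1/n(x)\le\tau_x(p_x)$. (One small correction: the absence of minimal projections makes $B_x$ \emph{diffuse}, not necessarily of type $\mathrm{II}_1$ --- it could be a diffuse masa --- but diffuseness already yields projections of every trace in $(0,\tau_x(p_x)]$, which is all you use.) The paper instead argues globally with the centre-valued trace $\Delta$: after reducing to $\Delta(p)\ge c1$, it takes a maximal family $\{q_1,p_2,\dots,p_n\}$ of equivalent orthogonal projections with $q_1\in B$ and shows, via comparison theory and the relative commutant hypothesis, that in the residual case $q_1Bq_1=q_1Mq_1$ is abelian and $M\cong L^\infty(\Omega)\otimes\mathbb{M}_n$, whence $\Delta(q_1)\le\Delta(p)$. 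What that buys is precisely the avoidance of the step you defer: assembling the $q_x$ into a single projection of $B$ requires a measurable field of choices (splitting $S$ into the type $\mathrm{II}_1$ part and the various type $\mathrm{I}_n$ parts and selecting measurably on each), and on the diffuse part in particular this is more than bookkeeping, since the inequality you need involves the $M$-centre-valued trace rather than the $B$-centre-valued one, so no off-the-shelf global statement about diffuse algebras applies directly. Your argument is therefore a legitimate and arguably more conceptual alternative, but to be complete it needs the measurable selection carried out, and it leans on the separable predual hypothesis more essentially than the paper's maximality argument does.
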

Observe that if $M$ is a finite factor, then Lemma \ref{gplem4.4} is immediate.  Our proof of Lemma \ref{gplem4.4} is classical, proceeding by analysing the center-valued trace on $P$. Alternatively one can establish the lemma by taking a direct integral over the center.   Since we have been unable to find this fact in the literature we give the details for completeness.

\begin{proof}[Proof of Lemma \ref{gplem4.4}.]
Let $\Delta$ denote the center-valued trace on $M$. We will make use of two properties of $\Delta$ from \cite[Theorem 8.4.3]{KR.2}. The first is that $p_1\precsim p_2$ if and only if $\Delta(p_1) \le \Delta(p_2)$, and the second is that $p_1\sim p_2$ if and only if $\Delta(p_1) = \Delta(p_2)$.

The hypothesis $B'\cap M\subseteq B$ implies that $B'\cap M = Z(B)$ and, in particular, that $Z(M) \subseteq Z(B)$. For some sufficiently small $c>0$, the spectral projection $z$ of $\Delta(p)$ for the interval $[c,1]$ is nonzero, and $\Delta(pz)\ge cz$. Since $Bz\subseteq Mz$ also satisfies the relative commutant hypothesis, it suffices to prove the result under the additional restriction $\Delta(p) \ge c1$ for some constant $c>0$.

Let $n\ge c^{-1}$ be any integer. Suppose that it is possible to find a nonzero projection $q\in B$ and an orthogonal set $\{q,p_2,\ldots, p_n\}$ of equivalent projections in $M$. The sum of these projections has central trace equal to $n\Delta(q)$ and is also bounded by 1, so that $\Delta(q)\le n^{-1}1\le c1$. But then $q\precsim p$ and we are done. Thus we may assume that there is an absolute bound on the length of any such set, and we may then choose one, $\{q_1,p_2,\ldots, p_n\}$, of maximal length. By cutting by the central support of $q_1$, we may assume that this 
central support is 1.

Now consider the inclusion $q_1Bq_1\subseteq q_1Mq_1$, and note that
\begin{equation}\label{gpeq4.23}
 (q_1Bq_1)'\cap q_1Mq_1 = q_1(B'\cap M) = q_1Z(B) = Z(q_1Bq_1)
\end{equation}
from (\ref{Cutdown}). Let $f_1$ and $f_2$ be non-zero orthogonal projections in $q_1Bq_1$ and $q_1Mq_1$ respectively. By the comparison theory of projections, there exists a projection $z\in Z(q_1Mq_1) \subseteq Z(q_1Bq_1)$ so that
\begin{equation}\label{gpeq4.24}
zf_1 \precsim zf_2,\quad (1-z)f_2\precsim (1-z)f_1,
\end{equation}
the equivalence being taken in $q_1Mq_1$. Now $zf_1\in q_1Bq_1$ and is equivalent to a subprojection $p_0$ of $zf_2\le q_1$. Then the pair $zf_1,p_0$ is equivalent to orthogonal pairs below each $p_i$, $2\le i\le n$, which will contradict the maximal length of $\{q_1,p_2, \ldots, p_n\}$ unless $zf_1=0$. Similarly $(1-z)f_2 = 0$. Thus $f_1$ and $f_2$ have orthogonal central supports in $q_1Bq_1$ and so \cite[Lemma 5.5.3]{Sinclair.MasaBook} shows that $q_1Bq_1$ is abelian. Equation (\ref{gpeq4.23}) then shows that $q_1Bq_1$ is a masa in $q_1Mq_1$, and so another application of \cite[Lemma 5.5.3]{Sinclair.MasaBook} shows that $q_1Mq_1$ is also abelian. Thus $q_1Bq_1=q_1Mq_1$.

Now the projection $1-q_1-p_2 \cdots -p_n$ must be $0$, otherwise it would have a non-zero subprojection equivalent to a nonzero projection $\tilde q_1 \in q_1Mq_1 = q_1Bq_1$, since $q_1$ has central support 1, and $\tilde q_1$ would lie in a set of $n+1$ equivalent orthogonal projections. Thus $q_1,p_2,\ldots, p_n$ are abelian projections in $M$ with sum 1, so $M$ is isomorphic to $L^\infty(\Omega)\otimes {\bb M}_n$ for some measure space $\Omega$. Identify $p$ and $q_1$ with measurable ${\bb M}_n$-valued functions. Since $q_1$ is abelian, the rank of $q_1(\omega)$ is 1 almost everywhere, and the rank of $p(\omega)$ is at least 1 almost everywhere since $\Delta(p)\ge c1$. Then $q_1$ is equivalent to a subprojection of $p$ since $\Delta(q_1) \le \Delta(p)$. This completes the proof.
\end{proof}

We conclude this section with a brief explanation of an averaging technique in finite von Neumann algebras which we will use subsequently. It has its origins in \cite{Chr}, but is also used extensively in \cite{Popa.Betti}. If $\eta\in L^2(M)$ and $\mathcal{U}$ is a group of unitaries in $M$ then the vector can be averaged over ${\cl U}$. This is normally associated with amenable groups, but can be made to work in this setting without this assumption. Form the $\|\cdot\|_2$-norm closure $\ovl K$ of
\[
K = \text{conv}\{u\eta u^*\colon \ u\in {\cl U}\}.
\]
There is a unique vector $\tilde\eta\in \ovl K$ of minimal norm, and uniqueness of $\tilde\eta$ implies that $u\tilde \eta u^* = \tilde \eta$ for all $u\in {\cl U}$. We refer to $\tilde\eta$ as the result of averaging $\eta$ over ${\cl U}$, and many variations of this are possible.  We give an example of this technique by establishing a technical result which will be needed in the proof of Theorem \ref{gpthm4.8}.

Recall (Proposition \ref{ListOfFacts}, (xii)) that $Me_BM$ is $\|\cdot\|_{2,\Tr}$-dense in $L^2(\langle M,e_B\rangle,\Tr)$. Consider a $*$-subalgebra $A$  which is strongly dense in $M$. If $x,y\in M$, then fix  sequences $\{x_n\}^\infty_{n=1}$, $\{y_n\}^\infty_{n=1}$ from $A$ converging strongly to $x$ and $y$, respectively. Then
\begin{align}
 \|(x-x_n)e_B\|^2_{2,\Tr} &=\Tr(e_B(x-x_n)^* (x-x_n)e_B)\notag\\
& = \tau((x-x_n)^*(x-x_n))=\langle (x-x_n)\xi, (x-x_n)\xi\rangle,
\end{align}
so $x_ne_B\to xe_B$ in $\|\cdot\|_{2,\Tr}$-norm. Thus $x_ne_By \to xe_By$ so, given $\vp>0$, we may choose $n_0$ so large that $\|x_{n_0}e_By- xe_By\|_{2,\Tr} < \vp/2$. The same argument on the right allows us to choose $n_1$ so large that $\|x_{n_0}e_By- x_{n_0}e_By_{n_1}\|_{2,\Tr} < \vp/2$, whereupon $\|xe_By - x_{n_0}e_By_{n_1}\|_{2,\Tr} < \vp$. The conclusion reached is that the algebra $Ae_BA=\{\sum_{i=1}^nx_ie_Ny_i:x_i,y_i\in A\}$ is $\|\cdot\|_{2,\Tr}$-norm dense in $L^2(\langle M,e_B\rangle,\Tr)$. In the next lemma, we will use this when $M$ is a tensor product $M_1\vnotimes M_2$ where we take $A$ to be the algebraic tensor product $M_1\otimes M_2$.

\begin{lem}\label{gplem3.7}
 Let $B_1,B_2$ be von Neumann subalgebras of finite von Neumann algebras $M_1,M_2$ and let $B = B_1\vnotimes B_2$, $M = M_1\vnotimes M_2$. Then
\[
 L^2(Z(B'\cap \langle M,e_B\rangle),\Tr) = L^2(Z(B'_1\cap \langle M_1,e_{B_1}\rangle), \Tr_1) \otimes_2 L^2(Z(B'_2\cap \langle M_2,e_{B_2}\rangle),\Tr_2).
\]
\end{lem}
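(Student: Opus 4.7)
By Proposition~\ref{ListOfFacts}(xiii), identify $\langle M,e_B\rangle \cong \langle M_1,e_{B_1}\rangle\vnotimes\langle M_2,e_{B_2}\rangle$ with canonical trace $\Tr = \Tr_1\otimes\Tr_2$, giving the Hilbert space factorization $L^2(\langle M,e_B\rangle,\Tr) = L^2(\langle M_1,e_{B_1}\rangle,\Tr_1)\otimes_2 L^2(\langle M_2,e_{B_2}\rangle,\Tr_2)$. Tomita's commutation theorem, combined with the fact that the center of a tensor product of von Neumann algebras is the tensor product of centers, gives $Z(B'\cap\langle M,e_B\rangle) = Z_1\vnotimes Z_2$, where $Z_i := Z(B_i'\cap\langle M_i,e_{B_i}\rangle)$. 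The lemma therefore reduces to the Hilbert space identity
\[
L^2(Z_1\vnotimes Z_2,\Tr) = L^2(Z_1,\Tr_1)\otimes_2 L^2(Z_2,\Tr_2)
\]
as closed subspaces of $L^2(\langle M,e_B\rangle,\Tr)$.

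The containment $\supseteq$ is immediate from the isometric identity $\|\xi_1\otimes\xi_2\|_{2,\Tr}^2 = \|\xi_1\|_{2,\Tr_1}^2\|\xi_2\|_{2,\Tr_2}^2$ for trace-finite $\xi_i\in Z_i$ and the closedness of $L^2(Z_1\vnotimes Z_2,\Tr)$. For the containment $\subseteq$, note that
\[
Z(B'\cap\langle M,e_B\rangle) = \bigl(B\cup(B'\cap\langle M,e_B\rangle)\bigr)'\cap\langle M,e_B\rangle,
\]
so the orthogonal projection $E\colon L^2(\langle M,e_B\rangle,\Tr)\to L^2(Z_1\vnotimes Z_2,\Tr)$ is realized, in the sense of the paragraph before the lemma, by successive averaging over the four unitary groups $\mathcal{U}(B_1)\otimes 1$, $1\otimes\mathcal{U}(B_2)$, $\mathcal{U}(B_1'\cap\langle M_1,e_{B_1}\rangle)\otimes 1$, $1\otimes\mathcal{U}(B_2'\cap\langle M_2,e_{B_2}\rangle)$; these together weakly generate $B\vee(B'\cap\langle M,e_B\rangle)$, and their common invariant subspace in $L^2(\langle M,e_B\rangle,\Tr)$ is exactly $L^2(Z_1\vnotimes Z_2,\Tr)$. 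Each of these averages acts on only one tensor factor, so when applied to an elementary tensor $\alpha\otimes\beta$ with $\alpha\in L^2(\langle M_1,e_{B_1}\rangle,\Tr_1)$ and $\beta\in L^2(\langle M_2,e_{B_2}\rangle,\Tr_2)$ the net effect is $E(\alpha\otimes\beta) = G_1(\alpha)\otimes G_2(\beta) \in L^2(Z_1)\otimes_2 L^2(Z_2)$, where $G_i\colon L^2(\langle M_i,e_{B_i}\rangle,\Tr_i)\to L^2(Z_i,\Tr_i)$ is the analogous orthogonal projection. By linearity, $E$ sends the algebraic tensor product of the two factor $L^2$ spaces into $L^2(Z_1)\otimes_2 L^2(Z_2)$. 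Given $\zeta\in L^2(Z_1\vnotimes Z_2,\Tr)$ and $\vp>0$, the $\|\cdot\|_{2,\Tr}$-density established just before the lemma (applied with $A=M_1\otimes M_2$) yields a finite sum $\eta'=\sum_j \alpha_j\otimes\beta_j$ with $\alpha_j\in M_1e_{B_1}M_1$, $\beta_j\in M_2e_{B_2}M_2$ and $\|\zeta-\eta'\|_{2,\Tr}<\vp$. Then $E(\eta')\in L^2(Z_1)\otimes_2 L^2(Z_2)$ and $\|E(\eta')-\zeta\|_{2,\Tr} = \|E(\eta'-\zeta)\|_{2,\Tr}<\vp$, so closedness of $L^2(Z_1)\otimes_2 L^2(Z_2)$ places $\zeta$ in it.

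The main obstacle will be to justify the averaging machinery in the semifinite (rather than finite) trace setting: one must verify that $\eta\mapsto u\eta u^*$ preserves the $\|\cdot\|_{2,\Tr}$-norm (which is automatic from the trace property), that a $\|\cdot\|_{2,\Tr}$-closed convex subset of $L^2(\langle M,e_B\rangle,\Tr)$ has a unique minimum-norm element implementing the orthogonal projection onto the invariant subspace, and that the $\mathcal{U}(Q)$-invariants for a von Neumann subalgebra $Q\subseteq \langle M,e_B\rangle$ coincide with $L^2(Q'\cap\langle M,e_B\rangle,\Tr)$. Each of these is a routine adaptation of the finite-trace arguments already used in the paper, but they require care because neither $\Tr$ on $\langle M,e_B\rangle$ nor its restriction to the $Z_i$ need be finite.
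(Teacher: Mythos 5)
Your proof is correct and rests on the same two pillars as the paper's: the $\|\cdot\|_{2,\Tr}$-density of $M_1e_{B_1}M_1\otimes M_2e_{B_2}M_2$ in $L^2(\langle M,e_B\rangle,\Tr)$ and averaging over unitary groups to push the tensor components into the centres $Z_i$. The packaging, however, differs in a way worth recording. The paper averages the specific approximant $\sum_i f_i\otimes g_i$ over the single group $\mathcal{U}(N_1)\otimes 1$ per factor, where $N_i=B_i\vee(B_i'\cap\langle M_i,e_{B_i}\rangle)$, and uses the linear independence of the $g_i$ to force each averaged first component into $N_1'\cap\langle M_1,e_{B_1}\rangle=Z_1$; since the $f_i$ are bounded, the averaged elements stay bounded, and the delicate question of identifying the $\mathcal{U}(Q)$-invariant vectors of $L^2(N,\Tr)$ with $L^2(Q'\cap N,\Tr)$ for a merely semifinite $\Tr$ never has to be confronted. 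You instead realize the fixed-point projection as $E=G_1\otimes G_2$ and apply it to the approximant; this is conceptually cleaner but obliges you to verify the semifinite-trace facts you list at the end, and it also needs one observation you do not state explicitly: the four unitary groups pairwise commute (each $B_i$ commutes with $B_i'\cap\langle M_i,e_{B_i}\rangle$, and the two tensor legs commute with each other), and this is precisely what guarantees that a \emph{single} pass of successive averaging lands in the joint fixed subspace and yields a norm-one idempotent, hence the orthogonal projection onto it --- for non-commuting groups one pass of alternating averaging does not realize the projection onto the intersection. With that observation added and the verifications you flag carried out (they do all go through; e.g.\ an invariant $\xi\in L^2$ lies in $L^2(Q'\cap N,\Tr)$ by taking the polar decomposition and the finite-trace spectral projections of $\xi^*\xi$), your argument is complete, and in fact the paper's linear-independence device can be viewed as a way of avoiding exactly these extra verifications.
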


Note that, although $\Tr_i$ is a semifinite trace on $M_i$, it need not be semifinite on $Z(B_i'\cap\langle M_i,e_{B_i}\rangle)$. This is why the lemma cannot be obtained immediately from the uniqueness of product measures on $\sigma$-finite measure spaces.

\begin{proof}[Proof of Lemma \ref{gplem3.7}.]
If $z_i\in Z(B'_i\cap \langle M_i,e_{B_i}\rangle)$, $i=1,2$, then $z_1\otimes z_2\in Z(B'\cap \langle M,e_B\rangle)$ and $\|z_1\otimes z_2\|_{2,\Tr} = \|z_1\|_{2,\Tr_1} \|z_2\|_{2, \Tr_2}$. This shows the containment from right to left.

Suppose that $z\in Z(B'\cap \langle M,e_B\rangle)$ with $\Tr(z^*z)<\infty$. Then $z$ lies in $L^2(\langle M,e_B\rangle,\Tr)$ so can be approximated in $\|\cdot\|_{2,\Tr}$-norm by sums of the form $\sum\limits^k_{i=1} x_ie_By_i$ with $x_i,y_i\in M_1\vnotimes M_2$. The preceding remarks then allow us to assume that $x_i$ and $y_i$ lie in the algebraic tensor product $M_1\otimes M_2$. Thus, given $\vp>0$, we may find elements $a_i,c_i\in M_1$, $b_i,d_i\in M_2$ so that
\begin{equation}\label{gpeq3.32}
 \left\|z - \sum^n_{i=1} (a_i\otimes b_i)(e_{B_1}\otimes e_{B_2}) (c_i\otimes d_i)\right\|_{2,\Tr} \le \vp.
\end{equation}
This may be rewritten as
\begin{equation}\label{gpeq3.33}
 \left\|z - \sum^n_{i=1} (a_ie_{B_1}c_i) \otimes (b_ie_{B_2}d_i)\right\|_{2,\Tr} \le \vp,
\end{equation}
and then as
\begin{equation}\label{gpeq3.34}
 \left\|z - \sum^n_{i=1} f_i\otimes g_i\right\|_{2,\Tr} \le \vp,
\end{equation}
where $f_i\in M_1 e_{B_1}M_1$ and $g_i\in M_2e_{B_2}M_2$. We may further suppose that the set $\{g_1,\ldots, g_n\}$ is linearly independent.

For $j=1,2$, let $N_j$ be the von Neumann algebra generated by $B_j$ and $B'_j \cap \langle M_j,e_{B_j} \rangle$, and note that $z$ commutes with $N_1\ovl\otimes N_2$. Let
\[
K = \ovl{\rm conv}^w \left\{\sum^n_{i=1} uf_iu^* \otimes g_i\colon \ u\in {\cl U}(N_1)\right\}, \ \ K_i = \ovl{\rm conv}^w\{uf_iu^*\colon \ u\in {\cl U}(N_1)\}
\]
for $1\le i\le n$. Then $K \subseteq \sum\limits^n_{i=1} K_i\otimes g_i$. By \cite[Lemma 9.2.1]{Sinclair.MasaBook} $K$ and each $K_i$ are closed in their respective $\|\cdot\|_2$-norms. If $k\in K$ is the element of minimal $\|\cdot\|_2$-norm in $K$ then it may be written as $k = \sum\limits^n_{i=1}k_i\otimes g_i$ with $k_i\in K_i$. Since $k$ is invariant for the action of ${\cl U}(N_1\otimes 1)$, we see that
\begin{equation}\label{gpeq3.35}
 \sum^n_{i=1} (uk_iu^*-k_i) \otimes g_i = 0,\qquad u\in {\cl U}(N_1).
\end{equation}
The linear independence of the $g_i$'s allows us to conclude that $uk_iu^* = k_i$ for $1\le i\le n$ and $u\in {\cl U}(N_1)$. Thus $k_i \in N'_1\cap \langle M_1,e_{B_1}\rangle = Z(B'_1\cap \langle M_1,e_{B_1}\rangle)$. The inequality \eqref{gpeq3.34} is preserved by averaging in this manner over ${\cl U}(N_1\otimes 1)$ so, replacing each $f_i$ by $k_i$ if necessary, we may assume that $f_i\in Z(B'_1\cap \langle M_1,e_{B_1}\rangle)$ for  $1\le i\le n$.  Now repeat this argument on the right, averaging over ${\cl U}(1\otimes N_2)$, to replace the $g_i$'s by elements of $Z(B'_2\cap \langle M_2,e_{B_2}\rangle)$. With these changes, \eqref{gpeq3.34} now approximates $z$ by a sum from
\[
L^2(Z(B'_1\cap \langle M_1,e_{B_1}\rangle),\Tr_1) \otimes_2 L^2(Z(B'_2\cap \langle M_2, e_{B_2}\rangle), \Tr_2)
\]
which proves the containment from left to right and establishes equality.
\end{proof}

\section{Projections in the basic construction}\label{gpsec3}

\indent 

In this section, we relate intertwiners of a subalgebra to certain projections in the basic construction. We consider a finite von~Neumann algebra $M$ and a von~Neumann subalgebra $B$ whose unit will always coincide with that of $M$. For the most part, we will be interested in the condition $B'\cap M \subseteq B$ (equivalent to $B'\cap M = Z(B)$), but we will make this requirement explicit when it is needed.

\begin{lem}\label{gplem3.1}
Let $B$ be a von Neumann subalgebra of a finite von Neumann algebra $M$ and let $v\in \oneGN (B)$. 
\begin{enumerate}
\item Then $v^*e_Bv$ is a projection in $(B'\cap \langle M,e_B\rangle)v^*v$.  
\item Suppose $q$ is a projection in $B$. Then $v^*e_Bv$ lies in $(B' \cap\langle M,e_B\rangle)q$ if, and only if, $v^*v\in Z(B)q=Z(qBq)$. 
\end{enumerate}
\end{lem}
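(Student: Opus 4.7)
The plan is to establish part (1) by direct computation, and then deduce part (2) by applying the pull-down map $\Psi$ of Proposition \ref{ListOfFacts}(xiv).

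For part (1), I will set $p = v^*v$ and first note that $vv^* = v(v^*v)v^* \in vBv^* \subseteq B$ by the intertwiner hypothesis, so both $p$ and $vv^*$ lie in $B$ and hence commute with $e_B$ by Proposition \ref{ListOfFacts}(iii). The containment $v^*e_Bv \in p\langle M, e_B\rangle p$ is immediate from $v = vp$, and the projection property follows from $(v^*e_Bv)^2 = v^*e_B(vv^*)e_Bv = v^*(vv^*)e_Bv = v^*e_Bv$ by sliding $vv^* \in B$ through $e_B$. The heart of the argument is the commutation identity
\[
v^*e_Bv \cdot b = pbp \cdot v^*e_Bv, \qquad b \in B.
\]
I will derive this by first observing that $\mathbb{E}_B(vbx) = (vbv^*)\mathbb{E}_B(vx)$ for every $x \in M$, which is a direct consequence of $vbv^* \in B$ and the $B$-bimodularity of $\mathbb{E}_B$; this gives $e_Bvb = (vbv^*)e_Bv$ as operators on $L^2(M)$, whence $v^*e_Bvb = (v^*vbv^*)e_Bv = pb \cdot v^*e_Bv = pbp \cdot v^*e_Bv$, the last equality using $pv^*e_Bv = v^*e_Bv$. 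Replacing $b$ by $pbp$ then shows that $v^*e_Bv$ commutes with $pBp$, yielding $v^*e_Bv \in (pBp)'\cap p\langle M, e_B\rangle p = (B'\cap \langle M, e_B\rangle)p$ via (\ref{Cutdown}).

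For part (2), my main tools are the identity $\Psi(v^*e_Bv) = v^*v = p$ together with the $M$-bimodularity of $\Psi$. For the forward implication, assume $v^*e_Bv \in (B'\cap\langle M, e_B\rangle)q = (qBq)' \cap q\langle M, e_B\rangle q$. Then $qv^*e_Bv = v^*e_Bv$ pulls down under $\Psi$ to $qp = p$, so $p \leq q$. Similarly, the commutation $v^*e_Bv \cdot qbq = qbq \cdot v^*e_Bv$ pulls down via $M$-bimodularity to $p \cdot qbq = qbq \cdot p$ for each $b \in B$, so $p$ commutes with $qBq$. Since $p \leq q$ places $p$ inside $qBq$, we conclude $p \in Z(qBq) = Z(B)q$ via (\ref{Cutdown}). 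Conversely, if $p \in Z(qBq)$, then $p = qpq$ gives $p \leq q$, so $qv^*e_Bv = qpv^*e_Bv = pv^*e_Bv = v^*e_Bv$, and the adjoint identity likewise gives $v^*e_Bv = v^*e_Bv q$, establishing $v^*e_Bv \in q\langle M, e_B\rangle q$. Applying the part (1) identity with $b$ replaced by $qbq \in B$ then yields $v^*e_Bv \cdot qbq = p(qbq)p \cdot v^*e_Bv = pbp \cdot v^*e_Bv$, and symmetrically $qbq \cdot v^*e_Bv = v^*e_Bv \cdot pbp$; these agree by the commutation of $v^*e_Bv$ with $pBp$ from part (1).

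The main obstacle is the commutation identity in part (1): it must be proved using only the one-sided hypothesis $vBv^* \subseteq B$ (the hypothesis $v^*Bv \subseteq B$ is \emph{not} available for general intertwiners). Once this identity is in hand, both directions of part (2) reduce to routine manipulations with the $M$-bimodularity of $\Psi$ together with the algebraic identities $pq = qp = p$.
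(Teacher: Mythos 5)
Your overall architecture is close to the paper's, but the displayed ``heart of the argument'' is false as stated. The identity $\mathbb{E}_B(vbx) = (vbv^*)\mathbb{E}_B(vx)$ does not hold for arbitrary $b\in B$: $B$-bimodularity only gives $(vbv^*)\mathbb{E}_B(vx) = \mathbb{E}_B(vbv^*vx) = \mathbb{E}_B(vb\,px)$ with $p=v^*v$, and this coincides with $\mathbb{E}_B(vbx)$ exactly when $pb(1-p)=0$, which fails for general $b$ when $p$ is not central in $B$. Consequently the commutation identity $v^*e_Bv\cdot b = pbp\cdot v^*e_Bv$ claimed for all $b\in B$ is also false: applying the pull-down map to both sides forces $pb=pbp$. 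A concrete counterexample is $B=M=\mathbb{M}_2$, $v=p=e_{11}$, $b=e_{12}$, where $e_B=1$, so the left side is $e_{12}$ while the right side is $0$. This is harmless for part (1), because there you only ever invoke the identity for elements of the form $pbp$, where it is valid --- this is precisely the restriction the paper imposes, proving commutation only with the corner $v^*vBv^*v$ and then citing (\ref{Cutdown}).

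The error does bite in your converse direction of part (2), where you apply the identity with $b$ replaced by $qbq$, which is generally not in $pBp$; as written that step is unjustified. It is repairable: once $p\in Z(qBq)$ and $v^*e_Bv = v^*e_Bv\,p$, you have $v^*e_Bv\cdot qbq = v^*e_Bv\cdot(p\,qbq) = v^*e_Bv\cdot(p\,qbq\,p)$, and $p\,qbq\,p = pbp\in pBp$ (using $pq=p$), so the valid corner commutation from part (1) finishes the computation. The paper sidesteps this calculation entirely: from $v^*v\in Z(qBq)$ it notes $Z(qBq)\subseteq Z((B'\cap\langle M,e_B\rangle)q)$, decomposes $(B'\cap\langle M,e_B\rangle)q$ as the direct sum of $(B'\cap\langle M,e_B\rangle)v^*v$ and $(B'\cap\langle M,e_B\rangle)(q-v^*v)$, and observes that $v^*e_Bv$ already lies in the first summand by part (1). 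Your other direction of part (2), pulling down $qv^*e_Bv=v^*e_Bv$ and the commutation with $qBq$ via the $M$-bimodularity of $\Psi$, is exactly the paper's argument.
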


\begin{proof}
1.~~ The element $v^*e_Bv$ is positive in $\langle M,e_B\rangle$. Since $vv^*\in B$ and so commutes with $e_B$, the following calculation establishes that $v^*e_Bv$ is a projection:
\begin{equation}\label{gpeq3.1}
 (v^*e_Bv)^2 = v^*e_Bvv^*e_Bv = v^*vv^*e_Bv = v^*e_Bv.
\end{equation}
For an arbitrary $b\in v^*vBv^*v$,
\begin{equation}\label{gpeq3.2}
 (v^*e_Bv)b = v^*e_Bvbv^*v = v^*vbv^*e_Bv= v^*vbv^*(vv^*)e_Bv = b(v^*e_Bv),
\end{equation}
where the second equality uses $vbv^*\in B$ to commute this element with $e_B$. Thus \eqref{gpeq3.2} establishes that $v^*e_Bv\in (v^*vBv^*v)'\cap v^*v\langle M,e_B\rangle v^*v$ which is $(B'\cap \langle M,e_B\rangle)v^*v$ (see (\ref{Cutdown})).

\n2.~~ Suppose now that $v^*v\in Z(B)q=Z(qBq)$. It is immediate that $Z(qBq) \subseteq Z((B'\cap \langle M,e_B\rangle)q)$, so we have a decomposition
\begin{equation}\label{gpeq3.3}
(B'\cap \langle M,e_B\rangle)q = (B' \cap \langle M,e_B\rangle) v^*v \oplus (B'\cap \langle M,e_B\rangle) (q-v^*v).
\end{equation}
We have already shown that $v^*e_Bv$ is in the first summand of \eqref{gpeq3.3} so must lie in $(B'\cap \langle M,e_B\rangle)q$.

Conversely, the hypothesis on $v^*e_Bv$ implies that $v^*e_Bv = v^*e_Bvq = qv^*e_Bv$, so the pull down map gives $v^*v = v^*vq = qv^*v$, showing that $v^*v\in qBq$. For each $b\in B$, $v^*e_Bvqbq=qbqv^*e_Bv$. Applying the pull down map gives $v^*vqbq=qbqv^*v$ and hence $v^*v\in Z(qBq)$.
\end{proof}

We now strengthen this lemma under the additional hypothesis that $B'\cap M\subseteq B$. Recall from part (iv) of Proposition \ref{ListOfFacts} that $JZ(B)J = Z(\langle M,e_B\rangle)$. When $B\subset M$ is a finite index inclusion of irreducible subfactors, Lemma \ref{gplem3.2} is contained in \cite[Proposition 1.7 (2) and Proposition 1.9]{Popa.Entropy}. The proof follows the extension to infinite index inclusions of irreducible subfactors in \cite[Lemma 3.3]{Saw.NormalisersSubfactors}.

\begin{lem}\label{gplem3.2}
Let $B$ be a von Neumann subalgebra of a finite von~Neumann algebra $M$ and suppose that $B'\cap M\subseteq B$. Let $v\in \oneGN (B)$. Then the projection $v^*e_Bv$ is central in $(B'\cap \langle M,e_B\rangle)v^*v$.
\end{lem}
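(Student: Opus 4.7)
Let $p = v^*v$ and $q = vv^*$. Both projections lie in $B$: the first by definition of $\oneGN(B)$, and the second because $vv^* = v(v^*v)v^* \in vBv^* \subseteq B$. In particular $p$ and $q$ commute with $e_B$ by Proposition \ref{ListOfFacts}(iii). By \eqref{Cutdown}, the ambient algebra $(B'\cap\langle M,e_B\rangle)p$ equals $(pBp)'\cap p\langle M,e_B\rangle p$, so I take $y$ in the latter algebra and aim to show $[y, v^*e_Bv] = 0$.

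The first step is a reduction via conjugation. Using the partial-isometry identities $vp = v = qv$ and $pv^* = v^* = v^*q$, the map $T\mapsto vTv^*$ is a $*$-isomorphism $\phi\colon p\langle M,e_B\rangle p \to q\langle M,e_B\rangle q$ with inverse $T\mapsto v^*Tv$, since $v^*(vTv^*)v = pTp = T$ for $T\in p\langle M,e_B\rangle p$. It carries $pBp$ onto $vBv^*\subseteq qBq$ and $v^*e_Bv$ onto $qe_B$. A direct calculation, using $qe_B = e_Bq$, gives
\[
v[y, v^*e_Bv]v^* = vyv^*e_B - e_Bvyv^* = [vyv^*, e_B],
\]
so by injectivity of $\phi$, the centrality statement $[y, v^*e_Bv]=0$ is equivalent to $[vyv^*, e_B] = 0$. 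Moreover, $\phi(y) = vyv^*$ lies in $(vBv^*)'\cap q\langle M,e_B\rangle q$: indeed, for $b\in pBp$, the identities $(vbv^*)(vyv^*) = v(by)v^*$ and $(vyv^*)(vbv^*) = v(yb)v^*$ coincide because $y\in (pBp)'$.

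The remaining (and main) step is to promote commutation with the corner $vBv^*\subseteq qBq$ to commutation with the Jones projection $e_B$. This is where the hypothesis $B'\cap M\subseteq B$ enters essentially: by \eqref{Cutdown} one has $(qBq)'\cap qMq = q(B'\cap M)q = qZ(B)q = Z(qBq)$, so the cut-down inclusion $qBq\subseteq qMq$ also satisfies the relative commutant condition, and $q\langle M,e_B\rangle q$ is the basic construction for this cut-down with Jones projection $qe_B$. The plan is to decompose $qL^2(M) = qL^2(B) \oplus qL^2(B)^\perp$, write $vyv^*$ in the resulting block form, and use commutation with $vBv^*$ together with the rigidity of the $B$-$B$-correspondence structure on $L^2(M)$ afforded by the relative commutant hypothesis to force the off-diagonal blocks of $vyv^*$ to vanish, giving $[vyv^*,e_B]=0$.

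The main obstacle is precisely this last step. Example \ref{gpexm4.10} from the introduction shows that the hypothesis $B'\cap M\subseteq B$ cannot be omitted, so it must be invoked in a genuinely non-trivial, bimodular way; the analysis must carefully track how the partial isometry $v$ intertwines the $B$-bimodule structure on $pL^2(M)$ with that on $qL^2(M)$.
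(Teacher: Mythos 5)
Your reduction is sound and coincides exactly with the skeleton of the paper's argument: setting $p=v^*v$, $q=vv^*$, observing that conjugation by $v$ carries $(pBp)'\cap p\langle M,e_B\rangle p$ into $(vBv^*)'\cap q\langle M,e_B\rangle q$ and $v^*e_Bv$ to $qe_B$, and noting that the desired centrality is equivalent to $[vyv^*,e_B]=0$. The problem is that you stop there: the entire analytic content of the lemma is the claim that $qe_B$ commutes with every element of $(vBv^*)'\cap q\langle M,e_B\rangle q$, and your proposal offers only a ``plan'' for this (a block decomposition of $qL^2(M)$ and an appeal to unspecified ``rigidity of the $B$-$B$-correspondence structure''), followed by an admission that this is the main obstacle. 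That is a genuine gap, not a routine verification, and the block-matrix picture by itself will not close it: commutation of $vyv^*$ with the subalgebra $vBv^*$ of $qBq$ gives no direct control over the off-diagonal blocks relative to $qL^2(B)\oplus qL^2(B)^\perp$ without further input.

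The missing ingredient is the averaging technique described at the end of Section \ref{gpsec2}. Take $t$ self-adjoint in $(vBv^*)'\cap q\langle M,e_B\rangle q$ and set $\eta=t\xi$. Choose $x_n\in qMq$ with $x_n\xi\to\eta$ in $\|\cdot\|_2$, and average each $x_n$ over the unitary group $v\,\mathcal{U}(pBp)\,v^*\subseteq qBq$; the invariance identity $Jvuv^*Jvuv^*\eta=\eta$ shows that averaging does not increase the distance to $\eta$, and the averaged elements $y_n$ lie in $(vBv^*)'\cap qMq=v(B'\cap M)v^*=vZ(B)v^*\subseteq qBq$. This is the precise point where $B'\cap M\subseteq B$ enters: it identifies the relative commutant of $vBv^*$ in the \emph{compression of $M$} (not of the basic construction) as a subalgebra of $qBq$. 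One concludes $\eta\in L^2(qBq)$, then that $L^2(qB)$ is invariant under $t$ via $tqb\xi=\lim_n y_nqb\xi$, hence $tqe_B=qe_Btqe_B$, and self-adjointness of $t$ yields $[t,qe_B]=0$. Without this (or an equivalent argument transferring the relative commutant condition from $qMq$ to the $L^2$-level), your proof is incomplete.
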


\begin{proof}
Define two projections $p,q\in B$ by $p=v^*v$ and $q=vv^*$. Now consider an arbitrary $x\in (B'\cap \langle M,e_B\rangle) p = (pBp)'\cap p\langle M,e_B\rangle p$. Then, for each $b\in B$,
\begin{equation}\label{gpeq3.6}
 (vxv^*)(vbv^*) = vxpbpv^* = vpbpxv^* = (vbv^*)(vxv^*),
\end{equation}
showing that $vxv^*\in (vBv^*)'\cap (q\langle M,e_B\rangle q)$. We next prove that $qe_B$ is central in $(vBv^*)'\cap (q\langle M,e_B\rangle q)$. It lies in this algebra by the previous calculation and Lemma \ref{gplem3.1}, as $qe_B=v(v^*e_Bv)v^*$.

Take $t\in (vBv^*)' \cap (q\langle M,e_B\rangle q)$ to be self-adjoint and let $\eta$ be $t\xi\in L^2(M)$. Now take a sequence $\{x_n\}^\infty_{n=1}$ from $M$ converging in $\|\cdot\|_2$-norm to $t\xi$. Since $t=qt=tq$, we may assume that the sequence $\{x_n\}^\infty_{n=1}$ lies in $qMq$, otherwise replace it by $\{qx_nq\}^\infty_{n=1}$.

For each $u$ in the unitary group ${\cl U}(pBp), t$ commutes with $vuv^*$ and so
\begin{align}
 Jvuv^*Jvuv^*\eta &= Jvuv^*Jvuv^*t\xi = Jvuv^*Jtvuv^*\xi\notag\\
&= tJvuv^* Jvuv^*\xi = tvuv^*vu^*v^*\xi\notag\\
\label{gpeq3.8}
&= tq\xi = t\xi = \eta,
\end{align}
where the third equality holds because $vuv^* \in B$ so that $Jvuv^*J\in (\langle M,e_B\rangle)'$. For each $n\ge 1$ and each $u\in {\cl U}(pBp)$,
\begin{align}
 \|Jvuv^*Jvuv^*x_n\xi - \eta\|_2 &= \|Jvuv^*Jvuv^*(x_n\xi-\eta)\|_2\notag\\
\label{gpeq3.9}
&\le \|x_n\xi-\eta\|_2,
\end{align}
from \eqref{gpeq3.8}. If we let $y_n$ be the element of $qMq$ obtained by averaging $x_n$ over the unitary group $v{\cl U}(pBp)v^* \subseteq qBq$, then \eqref{gpeq3.9} gives
\begin{equation}\label{gpeq3.10}
 \|y_n\xi-\eta\|_2 \le \|x_n\xi-\eta\|_2,\qquad n\ge 1,
\end{equation}
while $y_n\in (vBv^*)'\cap qMq$. Since
\begin{align}
 (vBv^*)' \cap qMq &= (vBv^*)'\cap (vMv^*) = v(B'\cap M)v^*\notag\\
\label{gpeq3.11}
&= vZ(B) v^*\subseteq qBq,
\end{align}
we see that $y_n\in qBq$ for $n\ge 1$. From \eqref{gpeq3.10} it follows that $\eta\in L^2(qBq)$. For $b\in B$,
\begin{align}
 tqb\xi &= tJb^*qJ\xi = Jb^*qJt\xi\notag\\
\label{gpeq3.12}
&= \lim_{n\to\infty} Jb^*qJy_n\xi = \lim_{n\to\infty} y_nqb\xi,
\end{align}
showing that $tqb\xi\in L^2(qB)$. Thus $L^2(qB)$ is an invariant subspace for $t$. The projection onto it is $qe_B$, so $tqe_B = qe_Btqe_B$. Since $t$ is self-adjoint, we obtain that $qe_B$ commutes with $(vBv^*)'\cap (q\langle M,e_B\rangle q)$, establishing centrality.

It was established in equation \eqref{gpeq3.6} that $vxv^*\in (vBv^*)'\cap (q\langle M,e_B\rangle q)$ whenever $x\in (B'\cap \langle M,e_B\rangle)p$, and so each such $vxv^*$ commutes with $qe_B$. Thus
\begin{align}
 v^*e_Bvx &= v^*qe_Bvxv^*v = v^*vxv^*qe_Bv\notag\\
\label{gpeq3.13}
&= xv^*e_Bv
\end{align}
for $x\in (B'\cap \langle M,e_B\rangle)p$, showing that $v^*e_Bv$ is central in $(B'\cap \langle M,e_B\rangle)p$.
\end{proof}

Since the centrality of the projections $v^*e_Bv$ in $(B'\cap \langle M,e_B\rangle)v^*v$ is crucial to our subsequent arguments, the following corollary highlights why the hypothesis $B'\cap M\subseteq B$ is essential.
\begin{cor}\label{gpcor3.3}
Let $B$ be a von Neumann subalgebra of a finite von Neumann algebra $M$. Then $e_B$ is central in $B'\cap \langle M,e_B\rangle$ if and only if $B'\cap M\subseteq B$.
\end{cor}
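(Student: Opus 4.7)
The plan is to prove both implications directly, with each being a one-line consequence of results already established.

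For the forward direction, suppose $B'\cap M\subseteq B$. I would apply Lemma \ref{gplem3.2} with $v=1$, noting that $1\in\oneGN_M(B)$ trivially since $1\cdot B\cdot 1=B\subseteq B$ and $1^*1=1\in B$. Since $v^*e_Bv=e_B$ and $v^*v=1$, Lemma \ref{gplem3.2} yields that $e_B$ is central in $(B'\cap\langle M,e_B\rangle)\cdot 1=B'\cap\langle M,e_B\rangle$.

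For the reverse direction, suppose that $e_B$ is central in $B'\cap \langle M,e_B\rangle$, and let $x\in B'\cap M$. The inclusion $M\subseteq \langle M,e_B\rangle$ gives $x\in B'\cap \langle M,e_B\rangle$, so by the hypothesized centrality $x$ commutes with $e_B$. Since $x\in M\cap \{e_B\}'$, Proposition \ref{ListOfFacts}(iii) forces $x\in B$, establishing $B'\cap M\subseteq B$.

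There is no substantive obstacle here; the corollary simply packages the content of Lemma \ref{gplem3.2} (specialized to $v=1$) together with the elementary identification $M\cap\{e_B\}'=B$ to characterize exactly when the hypothesis $B'\cap M\subseteq B$ holds in terms of centrality of $e_B$ in the relative commutant inside the basic construction.
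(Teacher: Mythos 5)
Your proposal is correct and follows exactly the paper's own argument: the forward direction is the $v=1$ specialization of Lemma \ref{gplem3.2}, and the converse uses that any $x\in B'\cap M$ lies in $B'\cap\langle M,e_B\rangle$, hence commutes with $e_B$, so $x\in B$ by Proposition \ref{ListOfFacts}(iii). Nothing further is needed.
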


\begin{proof}
If $B'\cap M\subseteq B$, then centrality of $e_B$ is a special case of Lemma \ref{gplem3.2} (i) with $v=1$. Conversely, suppose that $e_B$ is central in $B'\cap \langle M,e_B\rangle$ and consider $x\in B'\cap M$. Then $x$ commutes with $e_B$ so $x\in B$ by Proposition \ref{ListOfFacts} (iii). Thus $B'\cap M\subseteq B$.
\end{proof}

\begin{lem}\label{gplem3.6}
Let $B$ be a von Neumann subalgebra of a finite von Neumann algebra $M$, suppose that $B'\cap M\subseteq B$, and let $v\in \oneGN (B)$. Then any subprojection of $v^*e_Bv$ in $(B'\cap \langle M,e_B\rangle)v^*v$ has the form $pv^*e_Bv$ where $p$ is a central projection in $B$.
\end{lem}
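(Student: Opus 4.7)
The plan is to transport the problem via conjugation by $v$ to the Jones projection $qe_B$ (where $q=vv^*$), describe subprojections there explicitly, and then pull back. Set $p_0=v^*v$ and $q=vv^*$, so that $v(v^*e_Bv)v^*=qe_B$ because $q\in B$ commutes with $e_B$. From the proof of Lemma~\ref{gplem3.2}, conjugation by $v$ maps $(B'\cap\langle M,e_B\rangle)p_0$ into $(vBv^*)'\cap q\langle M,e_B\rangle q$, and $qe_B$ is central in the latter algebra. Hence $vfv^*$ is a projection below $qe_B$ which commutes with $vBv^*$.

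Proposition~\ref{ListOfFacts}(ix) gives $qe_B\langle M,e_B\rangle qe_B=qBqe_B$, and together with the injectivity of $x\mapsto xe_B$ on $M$ (Proposition~\ref{ListOfFacts}(vii)) this forces every projection below $qe_B$ in $\langle M,e_B\rangle$ to have the form $ce_B$ with $c$ a projection in $qBq$. Write $vfv^*=ce_B$. Since $vBv^*\subseteq B$ commutes with $e_B$, the commutation $ce_B\cdot vbv^*=vbv^*\cdot ce_B$ for $b\in B$ combined once more with the injectivity of $x\mapsto xe_B$ yields $c\in(vBv^*)'\cap qMq$. Equation~\eqref{gpeq3.11} --- the place where the hypothesis $B'\cap M\subseteq B$ enters --- identifies this commutant with $vZ(B)v^*$, so $c=vzv^*$ for some $z\in Z(B)$.

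The remaining step is to promote $z$ to an honest projection $p\in Z(B)$ with $vpv^*=c$. Let $e\in Z(B)$ be the central support of $p_0$ in $B$ and set $p=ze$. Viewing $B$ as a direct integral over $Z(B)$, one has the elementary fact that $wp_0=0\iff we=0$ for $w\in Z(B)$. Since $v^*cv=p_0zp_0=zp_0$ is a projection, $(z^2-z)p_0=0$ and hence $(z^2-z)e=0$, so $p=ze$ is a projection in $Z(B)$. The relation $ep_0=p_0$ together with $v=vp_0$ gives $ve=v$ (and $ev^*=v^*$), whence $vpv^*=vzev^*=vzv^*=c$. Then, using $f=(v^*v)f(v^*v)=v^*(vfv^*)v$ and $v^*vv^*=v^*$,
\[
f=v^*(vpv^*)e_Bv=pv^*e_Bv,
\]
since $p\in Z(B)$ commutes with $v^*v$.

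I expect the main obstacle to be this final lifting step: cutting by $p_0$ identifies $Z(p_0Bp_0)$ with $Z(B)p_0$, and the central-support trick $p=ze$ is precisely what keeps the lift inside $Z(B)$ itself rather than merely inside the corner $Z(B)p_0$. Without the hypothesis $B'\cap M\subseteq B$, equation~\eqref{gpeq3.11} fails and the identification of $c$ as $vzv^*$ breaks down entirely, so the whole strategy depends on this assumption.
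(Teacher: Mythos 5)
Your proposal is correct and follows essentially the same route as the paper: conjugate the subprojection by $v$ to a projection under $vv^*e_B$, write it as $ce_B$ with $c$ a projection in $vv^*Bvv^*$ commuting with $vBv^*$, use the relative commutant hypothesis to place $c$ in $vZ(B)v^*$, and pull back by $v^*$. Your explicit lifting $p=ze$ via the central support of $v^*v$ just spells out a step the paper leaves implicit when it asserts $f=vpv^*$ for a central \emph{projection} $p$.
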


\begin{proof}
By Lemma \ref{gplem3.1} (i), $v^*e_Bv\in (B'\cap \langle M,e_B\rangle)v^*v$. Suppose that a projection $q\in (B'\cap \langle M,e_B\rangle)v^*v$ lies below $v^*e_Bv$. Then $(vqv^*)^2 = vqv^*vqv^* = vqv^*$, so $vqv^*$ is a projection. The relation
\begin{equation}\label{gpeq3.27}
 (vqv^*)(vbv^*) = vq(v^*vbv^*v)v^* = v(v^*vbv^*v)qv^*=(vbv^*)(vqv^*),\qquad b\in B,
\end{equation}
shows that $vqv^*\in (vBv^*)'\cap vv^*\langle M,e_B\rangle vv^*$. Moreover, $vqv^*\le vv^*e_Bvv^* = e_Bvv^*$, so there exists a projection $f\in vv^*Bvv^*$ such that $vqv^* = fe_B$. For $b\in B$,
\begin{equation}\label{gpeq3.28}
fvbv^*e_B= fe_Bvbv^* = vqv^*vbv^* = vbv^*vqv^*= vbv^*fe_B,
\end{equation}
and so $fvbv^* = vbv^*f$. Thus 
\begin{equation}
f\in (vBv^*)'\cap vv^*Bvv^*.
\end{equation}
If $b_0\in B$ is such that $vv^*b_0vv^*$ commutes with
$vBv^*$, then
\begin{equation}
vv^*b_0vv^*vbv^*=vbv^*vv^*b_0vv^*,\ \ b\in B.
\end{equation}
Multiply on the left by $v^*$ and on the right by $v$ to
obtain
\begin{equation}
v^*b_0vv^*vbv^*v=v^*vbv^*vv^*b_0v,\ \ b\in B.
\end{equation}
Thus
\begin{equation}
v^*b_0v\in (v^*vBv^*v)'\cap v^*vMv^*v=v^*v(B'\cap M)v^*v
=v^*vZ(B)v^*v.
\end{equation}
Consequently, $vv^*b_0vv^*\in vZ(B)v^*$. It follows that
$(vBv^*)'\cap vv^*Bvv^*\subseteq vZ(B)v^*$,
and so there is a central projection $p\in Z(B)$ so that $f=vpv^*$. We now have
\begin{equation}\label{gpeq3.30}
 q = v^*fe_Bv = v^*vpv^*e_Bv = pv^*e_Bv,
\end{equation}
as required.
\end{proof}

In Section \ref{gpsec4}, we wish to use the projections $v^*e_Bv$ to investigate an intertwiner $v$ of a tensor product $B=B_1\vnotimes B_2\subset M_1\vnotimes M_2=M$, where each $B_i'\cap M_i\subseteq B_i$.  In conjunction with Proposition \ref{ListOfFacts} (xiii), Tomita's commutation theorem gives
\begin{equation}
(B'\cap\langle M,e_B\rangle)\cong (B_1'\cap\langle M_1,e_{B_1}\rangle)\vnotimes(B_2'\cap\langle M_2,e_{B_2}\rangle).
\end{equation}
By Lemma \ref{gplem3.2}, such an intertwiner gives rise to a central projection $v^*e_Bv$ in $(B'\cap\langle M,e_B\rangle)v^*v$. Unfortunately, in general the projection $v^*v$ will not factorise as an elementary tensor of projections $b_1\otimes b_2$, with $b_i\in B_i$, and so the algebra $(B'\cap\langle M,e_B\rangle)v^*v$ will not decompose as a tensor product. This prevents us from applying tensor product techniques to the projection $v^*e_Bv$ directly. However, standard von Neumann algebra theory (see for example \cite{KR.1}) gives a central projection $P\in B'\cap\langle M,e_B\rangle$ such that $v^*e_Bv = Pv^*v$. Since we need to ensure that the projection $P$ fully reflects the properties of $v$, we cannot just appeal to the general theory to obtain $P$ so we give an explicit construction in Definition \ref{DefPv} below.  The subsequent lemmas set out the properties of $P$ that we require later.

\begin{defn}\label{DefPv}
Let $B\subset M$ be an inclusion of finite von Neumann algebras with $B'\cap M\subseteq B$ and let $v\in\oneGN_M(B)$. Let $z\in Z(B)$ be the central support of $v^*v$. Define $p_0$ to be $v^*v$, and let $\{p_0,p_1,\ldots\}$ be a family of nonzero pairwise orthogonal projections in $B$ which is maximal with respect to the requirements that $p_n\le z$ and each $p_n$ is equivalent in $B$ to a subprojection in $B$ of $p_0$. Since  two projections in a von Neumann algebra with non-orthogonal central supports have equivalent non-zero subprojections, maximality gives $\sum\limits_{n\ge 0} p_n=z$. For $n\ge1$, choose partial isometries $w_n\in B$ so that $w^*_nw_n =q_n \le p_0$ and $w_nw^*_n = p_n$. Then define $v_n=vw^*_n\in \oneGN (B)$. Lemma \ref{gplem3.1} shows that $v^*_ne_Bv_n\in (B'\cap \langle M,e_B\rangle)v^*_nv_n$ and this space is $(B'\cap \langle M,e_B\rangle)p_n$ since $v^*_nv_n = w_nv^*vw^*_n = p_n$. In particular, $\{v^*_ne_Bv_n\}_{n\ge 0}$ is a set of pairwise orthogonal projections so we may define a projection $P_v=\sum\limits_{n\ge 0}v^*_ne_Bv_n$ in $\langle M,e_B\rangle$. 
\end{defn}

\begin{lem}\label{gplem4.3}
With the notation of Definition \ref{DefPv}, the projection $P_v$ is central in $B'\cap \langle M,e_B\rangle$ and satisfies $P_vv^*v = v^*e_Bv$.
\end{lem}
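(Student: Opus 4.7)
The plan is to verify the two claims of the lemma directly from the construction of $P_v$, the orthogonality of the projections $p_n$, and Lemma \ref{gplem3.2}.

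First I would record the elementary orthogonality observation that underlies everything: since $v_n^*v_n = w_n v^*v w_n^* = p_n$ and the projections $\{p_n\}_{n\ge 0}$ are pairwise orthogonal, the projections $v_n^*e_Bv_n \le v_n^*v_n = p_n$ are also pairwise orthogonal, so $P_v = \sum_{n\ge 0} v_n^* e_B v_n$ is indeed a well-defined projection in $\langle M, e_B\rangle$. Moreover, each $v_n^*e_Bv_n$ satisfies $p_n v_n^*e_Bv_n = v_n^*e_Bv_n = v_n^*e_Bv_n p_n$.

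Now I would prove the identity $P_v v^*v = v^*e_Bv$. Since $p_0 = v^*v$ and $p_n p_0 = 0$ for $n \ge 1$, the absorption relations above give $v_n^* e_B v_n \cdot v^*v = v_n^* e_B v_n \cdot p_n p_0 = 0$ for $n \ge 1$. For $n = 0$, $v_0 = v$, and $v^*e_Bv \le v^*v$ by Lemma \ref{gplem3.1}(1), so $v^*e_Bv \cdot v^*v = v^*e_Bv$. Summing over $n$ yields $P_v v^*v = v^*e_Bv$.

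For centrality of $P_v$ in $B' \cap \langle M, e_B\rangle$, it suffices to show that each summand $v_n^*e_Bv_n$ commutes with every $x \in B' \cap \langle M, e_B\rangle$, and then sum. Fix $n$ and such an $x$. Since $p_n \in B$, we have $xp_n = p_n x$, and this element lies in $(B' \cap \langle M, e_B\rangle)p_n$. Applying Lemma \ref{gplem3.2} to $v_n \in \oneGN(B)$ (which requires the hypothesis $B'\cap M\subseteq B$), we know that $v_n^*e_Bv_n$ is central in the cut-down $(B'\cap\langle M,e_B\rangle) v_n^*v_n = (B'\cap\langle M,e_B\rangle) p_n$. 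Thus
\begin{equation}
v_n^*e_Bv_n \cdot x = v_n^*e_Bv_n \cdot p_n x = (v_n^*e_Bv_n)(xp_n) = (xp_n)(v_n^*e_Bv_n) = x \cdot v_n^*e_Bv_n,
\end{equation}
using $v_n^*e_Bv_n p_n = v_n^*e_Bv_n$ at the last step. Summing over $n$ gives $P_v x = xP_v$, completing the proof.

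There is no serious obstacle: the argument is a bookkeeping exercise built on Lemma \ref{gplem3.2}, the orthogonality of the $p_n$ (guaranteed by the maximal family construction in Definition \ref{DefPv}), and the fact that the projections $p_n$ lie in $B$ and hence commute with the relative commutant $B' \cap \langle M, e_B\rangle$. Note that the additional feature of the construction, namely $\sum p_n = z$ being the central support of $v^*v$, is not required for the present lemma; it will be needed only in later results where $P_v$ is shown to be compatible with the tensor product decomposition.
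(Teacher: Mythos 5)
Your verification of $P_vv^*v=v^*e_Bv$ and your check that each summand $v_n^*e_Bv_n$ (hence $P_v$) commutes with every element of $B'\cap\langle M,e_B\rangle$ are both correct. But there is a genuine gap in the centrality claim: centrality in $B'\cap\langle M,e_B\rangle$ means membership in $Z(B'\cap\langle M,e_B\rangle)=(B'\cap\langle M,e_B\rangle)\cap(B'\cap\langle M,e_B\rangle)'$, and you have only established the second containment. You still need $P_v\in B'\cap\langle M,e_B\rangle$, i.e.\ that $P_v$ commutes with $B$, and this does not follow from "summing the summands." Lemma \ref{gplem3.1} places $v_n^*e_Bv_n$ only in the cut-down $(B'\cap\langle M,e_B\rangle)p_n=(p_nBp_n)'\cap p_n\langle M,e_B\rangle p_n$, whose elements commute with $p_nBp_n$ but \emph{not} with all of $B$ (an element $yp_n$ with $y\in B'\cap\langle M,e_B\rangle$ commutes with $b\in B$ only when $y(p_nb-bp_n)=0$). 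So the individual summands need not lie in $B'\cap\langle M,e_B\rangle$, and neither, a priori, does their sum.

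This is precisely where the feature you dismiss in your last paragraph --- that the maximal family satisfies $\sum_n p_n=z\in Z(B)$ --- is indispensable. A proper partial sum $P^S=\sum_{n\in S}v_n^*e_Bv_n$ (with $0\in S$) enjoys both of the properties you verified, yet in general fails to commute with $B$: if $P^S$ did lie in $B'\cap\langle M,e_B\rangle$, then for $m\notin S$ one would have $P^Sv_m^*e_Bv_m=w_mP^S v^*e_Bv\,w_m^*=w_mv^*e_Bvw_m^*=v_m^*e_Bv_m$, while orthogonality gives $P^Sp_m=0$ and hence $P^Sv_m^*e_Bv_m=0$, forcing $v_m^*e_Bv_m=0$, which is false. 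The paper closes the gap by strengthening the use of Lemma \ref{gplem3.2}: since $Z\bigl((B'\cap\langle M,e_B\rangle)v^*v\bigr)=Z(B'\cap\langle M,e_B\rangle)v^*v$, one writes $v^*e_Bv=tv^*v$ with $t\in Z(B'\cap\langle M,e_B\rangle)$, whence $P_v=\sum_n w_ntp_0w_n^*=t\sum_n p_n=tz$, and this lies in $Z(B'\cap\langle M,e_B\rangle)$ because $z\in Z(B)\subseteq Z(B'\cap\langle M,e_B\rangle)$. If you prefer to stay with your term-by-term approach, you must additionally prove $P_vb=bP_v$ for $b\in B$, e.g.\ by reducing to $b\in zBz$ (using $z\in Z(B)$) and checking the cross-term identity $v_n^*e_Bv_n(p_nbp_m)=(p_nbp_m)v_m^*e_Bv_m$, which relies on $v^*e_Bv$ commuting with $w_n^*bw_m\in p_0Bp_0$; note that this uses pairs of members of the family and therefore genuinely requires its completeness.
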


\begin{proof}
 This projection is $P_v = \sum\limits_{n\ge 0} w_nv^*e_Bvw^*_n$. By Lemma \ref{gplem3.2} there exists $t\in Z(B'\cap \langle M,e_B\rangle)$ so that $v^*e_Bv = tv^*v$, and so $P_v$ becomes
\begin{equation}\label{gpeq4.22}
 P_v = \sum_{n\ge 0} w_ntv^*vw^*_n= \sum_{n\ge 0} tw_nv^*vw^*_n = \sum_{n\ge 0} tw_np_0w^*_n = tz.
\end{equation}
Thus $P_v\in B'\cap \langle M,e_B\rangle$ and $P_vv^*v = tv^*v = v^*e_Bv$ since $z$ is the central support  of $v^*v$. Since $z\in Z(B)\subset Z(B'\cap \langle M,e_B\rangle)$, it follows that $P_v=tz$ also lies in $Z(B'\cap\langle M,e_B\rangle)$
\end{proof}

\begin{rem}\label{gprem3.7}
The proof of Lemma \ref{gplem4.3} shows that $P_v$ is the minimal projection in $B'\cap\langle M,e_B\rangle$ with $P_vv^*v=v^*e_Bv$. This gives a canonical description of $P_v$ which is independent of the choices made in Definition \ref{DefPv}. The explicit formulation of the definition is useful in transfering properties from $v^*e_Bv$ to $P_v$.
\end{rem}
We now identify the subprojections of $P_v$. This will be accomplished by the next lemma, which considers a wider class of projections needed subsequently. Let $\{v_i\}^\infty_{i=1}$ be a sequence from $\oneGN (B)$ satisfying $v_iv^*_j=0$ for $i\ne j$, let $p\in B$ be the projection $\sum\limits^\infty_{i=1}v^*_iv_i$ and let $P\in \langle M,e_B\rangle$ be the projection $\sum\limits^\infty_{i=1} v^*_ie_Bv_i$. In particular, the projection $P_v$ of Definition \ref{DefPv} is of this form. Let $N(P)$ denote the von Neumann algebra
\begin{equation}\label{DefNP}
N(P) = \{x\in pMp\colon \ xP=Px\}\subseteq M.
\end{equation}

\begin{lem}\label{gplem4.5}
Let $P = \sum\limits^\infty_{i=1} v^*_ie_Bv_i$ be as above, and let $p = \sum\limits^\infty_{i=1} v^*_iv_i\in B$.
\begin{itemize}
 \item[\rm (i)] If $x\in pMp$ satisfies $xP=0$, then $x=0$; 
\item[\rm (ii)] The map $x\to xP$ is a $*$-isomorphism of $N(P)$ into $\langle M,e_B\rangle$;
\item[\rm (iii)] A projection $Q\in \langle M,e_B\rangle$ satisfies $Q\le P$ if and only if there exists a projection $f\in N(P)$ such that $Q=fP$. Moreover, if $P$ and $Q$ lie in $B'\cap \langle M,e_B\rangle$, then $f\in Z(B)$ and $Q$ has the same form as $P$.
\end{itemize}
\end{lem}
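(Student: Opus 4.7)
My plan is to handle the three parts sequentially, using throughout Proposition \ref{ListOfFacts} and the observation that the hypothesis $v_iv_j^* = 0$ for $i \ne j$ forces the initial projections $q_i := v_i^*v_i$ to be pairwise orthogonal (take the adjoint $v_jv_i^* = 0$, multiply on the right by $v_i$ and on the left by $v_j^*$ to obtain $q_jq_i = 0$).

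For (i), I would right-multiply $xP = 0$ by a fixed $v_j^*$. The orthogonality $v_iv_j^* = 0$ for $i \ne j$, together with $v_jv_j^* \in v_jBv_j^* \subseteq B$ commuting with $e_B$, collapses the sum to $xv_j^*e_B = 0$; Proposition \ref{ListOfFacts}(vii) then yields $xv_j^* = 0$. Right-multiplying by $v_j$ and summing gives $xp = x = 0$. Part (ii) is formal once (i) is in hand: the map $x \mapsto xP$ is multiplicative since $P^2 = P$ and $xP = Px$, adjoint-preserving since $P = P^*$, and injective by (i).

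For (iii), I aim to prove the stronger identity $P\langle M,e_B\rangle P = N(P)P$. The inclusion $\supseteq$ is immediate from $fP = PfP$. For $\subseteq$ I compute on the dense subalgebra $Me_BM$: for $X = ae_Bb$, two applications of Proposition \ref{ListOfFacts}(ii) give
\begin{equation*}
PXP \;=\; \sum_{i,j} v_i^*\,\mathbb{E}_B(v_ia)\,\mathbb{E}_B(bv_j^*)\,e_B v_j \;=\; fP
\end{equation*}
with $f := \sum_{i,j} v_i^*\,\mathbb{E}_B(v_ia)\,\mathbb{E}_B(bv_j^*)\,v_j \in pMp$, and the same collapse of $v_kv_i^*$-type terms confirms $Pf = fP$. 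Since (ii) realises $x \mapsto xP$ as a normal injective $*$-homomorphism, its image $N(P)P$ is a von Neumann subalgebra of $\langle M,e_B\rangle$, hence SOT-closed; the $*$-strong density of $Me_BM$ in $\langle M,e_B\rangle$ (Proposition \ref{ListOfFacts}(vi)) then propagates the inclusion to all of $P\langle M,e_B\rangle P$. A subprojection $Q \le P$ satisfies $Q = PQP = fP$, and the $*$-isomorphism in (ii) forces $f$ to be a projection.

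For the ``moreover'' clause I apply the pull-down map $\Psi$. Pairwise orthogonality of the $q_i$'s gives $\Tr(P) = \tau(p) < \infty$, so $Q \in L^1(\langle M,e_B\rangle)$ and the $M$-bimodularity of $\Psi$ yields $\Psi(Q) = f\Psi(P) = fp = f$. The relation $bQ = Qb$ for $b \in B$ then transports to $bf = fb$, placing $f$ in $B' \cap M = Z(B)$ by the standing hypothesis. Setting $w_i = v_if$, centrality of $f$ in $B$ gives $w_i^*w_i = fq_i \in B$ and $w_iBw_i^* \subseteq v_iBv_i^* \subseteq B$, so each $w_i \in \oneGN_M(B)$; since $f$ commutes with $e_B$ and with each $v_i^*e_Bv_i = Pq_i$ (as $f$ commutes with both $P$ and $q_i$), one obtains $w_i^*e_Bw_i = fv_i^*e_Bv_i$, which sums to $Q$, while $w_iw_j^* = v_ifv_j^* = 0$ for $i \ne j$ follows from $v_iq_j = v_iq_iq_j = 0$. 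The main obstacle I anticipate is the density step in (iii): one has to recognise $N(P)P$ as a von Neumann subalgebra via (ii) in order to propagate the explicit generator-level identity; the rest is bookkeeping with the orthogonality relations.
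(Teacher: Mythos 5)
Parts (i) and (ii) are correct and essentially the paper's argument (in (i) you cancel via injectivity of $x\mapsto xe_B$ where the paper uses the pull-down map applied to $xv_k^*e_Bv_k=0$; both are fine). For part (iii) you take a genuinely different route: the paper never proves the identity $P\langle M,e_B\rangle P=N(P)P$. Instead it works only with the given projection $Q\le P$, forms the truncations $P_kQP_k=\sum_{m,n\le k}v_m^*b_{m,n}e_Bv_n$, builds the \emph{finite} sums $x_k=\sum_{m,n\le k}v_m^*b_{m,n}v_n\in pMp$, deduces that each $x_k$ is self-adjoint (hence in $N(P)$) by applying the pull-down map to $x_kP=Px_k^*$, gets the uniform bound $\|x_k\|\le 1$ from the isometry in (ii), and takes a $\sigma$-weak accumulation point. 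Your version, which identifies the whole corner $P\langle M,e_B\rangle P$ with $N(P)P$ by computing on generators $ae_Bb$ and invoking $*$-strong density, is cleaner and yields a stronger statement; the generator-level identity $PXP=fP$ and the closedness of $N(P)P$ (image of a normal injective $*$-homomorphism) are both correct.

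The one real gap is the unexamined assertion that $f=\sum_{i,j}v_i^*\mathbb{E}_B(v_ia)\mathbb{E}_B(bv_j^*)v_j$ lies in $pMp$: this is a doubly infinite series and its convergence to a \emph{bounded} operator is precisely the issue the paper flags with ``the introduction of partial sums below is to circumvent some questions of convergence''; it is not bookkeeping, since if $f$ were only affiliated the factorization $PXP=fP$ with $f\in N(P)$ would collapse. It is true, and can be repaired in either of two ways. One can factor $f=\bigl(\sum_iv_i^*\mathbb{E}_B(v_ia)\bigr)\bigl(\sum_j\mathbb{E}_B(bv_j^*)v_j\bigr)$ and use $v_jv_k^*=0$ together with the Kadison--Schwarz inequality to get
\begin{equation*}
\Bigl(\sum_j\mathbb{E}_B(bv_j^*)v_j\Bigr)\Bigl(\sum_k\mathbb{E}_B(bv_k^*)v_k\Bigr)^*=\sum_j\mathbb{E}_B(bv_j^*)v_jv_j^*\mathbb{E}_B(bv_j^*)^*\le\sum_j\mathbb{E}_B(bv_j^*v_jb^*)=\mathbb{E}_B(bpb^*)\le\|b\|^2,
\end{equation*}
and similarly for the other factor, so $\|f\|\le\|a\|\,\|b\|$. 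Alternatively, and closer to the paper, one can define the finite truncations $f_k$ (sums over $i,j\le k$), check $f_kP=P_kXP_k=Pf_k$ so that $f_k\in N(P)$, conclude $\|f_k\|=\|P_kXP_k\|\le\|X\|$ from the isometry in (ii), and pass to a $\sigma$-weak accumulation point. With either repair your argument is complete. The ``moreover'' clause is handled correctly: your use of $M$-bimodularity of the pull-down map ($\Psi(Q)=f\Psi(P)=f$, so $bQ=Qb$ gives $bf=fb$ and $f\in B'\cap M=Z(B)$) is a mild and valid variant of the paper's argument, which instead first shows $p\in Z(B)$ and then applies part (i) to $(bf-fb)P=0$; your verification that $Q=\sum_i(v_if)^*e_B(v_if)$ has the required form matches the paper.
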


\begin{proof}
 (i)~~Suppose that $x\in pMp$ and $xP=0$. Then
\begin{equation}\label{gpeq4.26}
 x \sum^\infty_{i=1} v^*_ie_Bv_i = 0.
\end{equation}
Multiply on the right in \eqref{gpeq4.26} by $v^*_kv_k$ to obtain $xv^*_ke_Bv_k = 0$ for $k\ge 1$. The pull down map gives $xv^*_kv_k=0$. Summing over $k$ shows that $xp=0$ and the result follows since $x=xp$.

\n (ii)~~Since $P\in N(P)'$, the map $x\mapsto xP$ is a $*$-homomorphism on $N(P)$. It has trivial kernel, by (i), so is a $*$-isomorphism.

\n (iii)~~If $f\in N(P)$, then it is clear that $fP$ is a projection below $P$, since $f$ commutes with $P$. Conversely, consider a projection $Q\le P$ with $Q\in \langle M,e_B\rangle$. The introduction of partial sums below is to circumvent some questions of convergence.

Define $P_k = \sum\limits^k_{i=1} v^*_ie_Bv_i$. Then $\lim\limits_{k\to\infty} P_k=P$ strongly, so $P_kQP_k$ converges strongly to $PQP=Q$. For $m,n\ge 1$, let $b_{m,n}\in B$ be the element such that $b_{m,n}e_B = e_Bv_mQv^*_ne_B$. Then
\begin{align}
 P_kQP_k &= \sum^k_{m,n=1} v^*_me_Bv_mQv^*_ne_Bv_n
= \sum^k_{m,n=1} v^*_mb_{m,n}e_Bv_n\notag\\
&= \sum^k_{m,n=1} v^*_mb_{m,n}e_Bv_nv^*_nv_n
\label{gpeq4.27}
= \sum^k_{m,n=1} v^*_mb_{m,n}v_nv^*_ne_Bv_n.
\end{align}
Now define $x_k\in pW^*(\oneGN (B))p$ by $x_k = \sum\limits^k_{m,n=1} v^*_mb_{m,n}v_n$. The relations $v_iv^*_j = 0$ for $i\ne j$ allow us to verify that $x_kP = P_kQP_k$, and consequently
\begin{equation}\label{gpeq4.28}
 x_kP = Px^*_k, \qquad k\ge 1,
\end{equation}
since $P_kQP_k$ is self-adjoint. Thus
\begin{equation}\label{gpeq4.29}
 \sum^\infty_{i=1} x_kv^*_ie_Bv_i = \sum^\infty_{i=1} v^*_ie_Bv_ix^*_k.
\end{equation}
The sums in \eqref{gpeq4.29} converge in $\|\cdot\|_1$--norm, so we may apply the pull down map to obtain $x_kp = px^*_k$. Since $x_k=x_kp$, we conclude that $x_k$ is self-adjoint. Thus, from \eqref{gpeq4.28}, $x_k$ commutes with $P$, and so lies in $N(P)$. From above, $x_kP\ge 0$ and $\|x_kP\|\le 1$, so $x_k\ge 0$ and $\|x_k\|\le 1$ by (ii). Let $f$ be a $\sigma$-weak accumulation point of the sequence $\{x_k\}^\infty_{k=1}$. Then $f\ge 0$, $\|f\|\le 1$, and $f\in N(P)$. Since $x_kP = P_kQP_k$, we conclude that $fP=Q$. It now follows from (ii) that $f$ is a projection in $N(P)$. 

If $P\in B'\cap \langle M,e_B\rangle$, then the pull down map gives $bp=pb$ for $b\in B$, so $p\in Z(B)$. If also $Q\in B'\cap \langle M,e_B\rangle$, then $Q=fP$ and $f\le p$. If $b\in Bp$ then commutation with $Q$ gives $(bf-fb)P=0$, so $f\in B'\cap M=Z(B)$, by (i). Finally $Q=fP=\sum_i(v_if)^*e_B(v_if)$, so is of the same form as $P$.
\end{proof}

\section{Intertwiners of tensor products}\label{gpsec4}

\indent 

In this section we will prove one of our main results, the equality of $W^*(\oneGN (B_1))\ovl\otimes\break W^*(\oneGN (B_2))$ and $W^*(\oneGN (B_1\ovl\otimes B_2))$, where $B_i \subseteq M_i$, $i=1,2$, are inclusions of finite von Neumann algebras satisfying $B'_i\cap M_i\subseteq B_i$. The key theorem for achieving this is the following one, which enables us to detect those central projections in corners of the relative commutant of the basic construction which arise from intertwiners. It is inspired by \cite[Proposition 2.7]{Chifan.Normalisers},
  although is not a direct generalization of that result. For comparison,
\cite[Proposition 2.7]{Chifan.Normalisers} shows that, in the case of a masa $A$, a projection $P\in A'\cap \langle M,e_A\rangle$ which is subequivalent to $e_A$ dominates an operator $v^*e_Bv$ for some $v\in \twoGN (A)$. Example \ref{gpexm4.2} below will show that such a result will not hold in general without additional hypotheses.

\begin{thm}\label{gpthm4.1}
Let $A$ be an abelian von Neumann algebra with a fixed faithful normal semifinite weight $\Phi$. Let $B$ be a von Neumann subalgebra of a finite von Neumann algebra $M$ with a faithful normal trace $\tau$ satisfying $B'\cap M\subseteq B$. Fix a projection $q\in A\vnotimes B$ and suppose that $P\in (A\vnotimes (B'\cap \langle M,e_B\rangle))q$ is a nonzero projection such that $P \precsim (1\otimes e_B)$ in $A\vnotimes \langle M,e_B\rangle$, and satisfies
\begin{equation}\label{gpeq4.1}
(\Phi\otimes\Tr)(Pr) \le (\Phi\otimes\tau)(qr)
\end{equation}
for all projections $r\in Z(q(A\vnotimes B)q)$. Then there exists an element $v\in \oneGN _{A\vnotimes M}(A\vnotimes B)$ such that $P=v^*(1\otimes e_B)v$.
\end{thm}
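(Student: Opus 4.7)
The plan is to construct $v$ from a partial isometry witnessing the subequivalence $P \precsim 1 \otimes e_B$, using the pull-down map together with the trace condition to secure boundedness and exactness.

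First, apply the subequivalence to select a partial isometry $w \in A \vnotimes \langle M, e_B\rangle$ with $w^*w = P$ and $ww^* \le 1 \otimes e_B$. Since $(1 \otimes e_B)(A \vnotimes \langle M, e_B\rangle)(1 \otimes e_B) = (A \vnotimes B)(1 \otimes e_B)$ by Proposition~\ref{ListOfFacts}(ix) applied to the tensor inclusion $A \vnotimes B \subseteq A \vnotimes M$, the final projection takes the form $ww^* = f(1 \otimes e_B)$ for some projection $f \in A \vnotimes B$. Consequently $(1 \otimes e_B) w = w$ and hence $w^*(1 \otimes e_B) w = w^*w = P$. Were $w$ already in $A \vnotimes M$ we would be done; the task is to promote $w$ to such an element.

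Second, I would translate the trace hypothesis into a pull-down bound. Since $B' \cap M = Z(B)$ (from $B' \cap M \subseteq B$) and $P$ commutes with $A \vnotimes B$, the bimodularity of $I \otimes \Psi$ gives $(I \otimes \Psi)(P) \in A \vnotimes Z(B)$. The pre-adjoint identity $\Tr(Py) = (\Phi \otimes \tau)((I \otimes \Psi)(P) y)$ for $y \in A \vnotimes M$ translates the hypothesis $(\Phi \otimes \Tr)(Pr) \le (\Phi \otimes \tau)(qr)$ into $(\Phi \otimes \tau)((I \otimes \Psi)(P) r) \le (\Phi \otimes \tau)(qr)$ for every projection $r$ central in $q(A \vnotimes B)q$. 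Since $(I \otimes \Psi)(P)$ itself lies in the abelian algebra $Z(q(A \vnotimes B)q)$, a standard measure-theoretic approximation forces the pointwise bound $(I \otimes \Psi)(P) \le q$; in particular $(I \otimes \Psi)(P)$ is bounded.

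Third, I would build $v$ as a bounded element of $A \vnotimes M$ via a suitable pull-down of $w$ or $w^*$. After localizing by projections $e \in A$ of finite $\Phi$-weight so that $(e \otimes 1)w \in L^1(A \vnotimes \langle M, e_B\rangle)$, the bound on $(I \otimes \Psi)(P)$ together with Lemma~\ref{gplem2.2} and its tensor versions in Lemmas~\ref{gplem2.3}--\ref{gplem2.5} forces the local pull-downs to be bounded; these piece together across $e$ to yield a single $v \in A \vnotimes M$. The $M$-bimodularity of $I \otimes \Psi$ then gives $v^*v \in A \vnotimes B$ and $v(A \vnotimes B)v^* \subseteq A \vnotimes B$, so that $v$ is an intertwiner; the identity $v^*(1 \otimes e_B) v = P$ follows from the corresponding identity $w^*(1 \otimes e_B) w = P$ using the injectivity of $x \mapsto x(1 \otimes e_B)$ on $A \vnotimes M$ (Proposition~\ref{ListOfFacts}(vii)).

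The main obstacle is this third step, namely producing $v$ as an honestly bounded partial isometry rather than merely an $L^1$-element, since boundedness of pull-downs is not automatic in general. Here boundedness is forced precisely by the trace condition, through the order-theoretic properties of $\Psi$ developed in Section~\ref{gpsec2}. The relative-commutant hypothesis $B' \cap M \subseteq B$ is essential throughout, as it places pull-downs of $(A \vnotimes B)$-commuting elements inside $A \vnotimes Z(B)$, where the abelian approximation argument of Step~2 applies.
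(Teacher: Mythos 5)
Your overall strategy is the one the paper uses: witness $P\precsim 1\otimes e_B$ by a partial isometry $w$ with $w^*w=P$ and $ww^*\le 1\otimes e_B$, use $B'\cap M\subseteq B$ to place the relevant positive $L^1$-datum inside the abelian algebra $(A\vnotimes Z(B))q$, let the trace hypothesis \eqref{gpeq4.1} force boundedness there, and then promote $w$ to an element of $A\vnotimes M$; the reduction to finite $\Phi$ by localizing over projections in $A$ is also exactly the paper's third step. Your Step~2 is correct, and the bimodularity observation is in fact a tidier route than the paper's: the paper reaches the same conclusion (that $\eta^*\eta\in L^1(Z(B)q)$ for the vector $\eta=JV^*\xi$, which is $(I\otimes\Psi)(P)$ in disguise) via a $\phi$-twisted averaging argument over $\mathcal U(qBq)$.

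The gap is in Step~3. Lemmas \ref{gplem2.2}--\ref{gplem2.5} are order statements about \emph{positive} elements ($\Psi(x)\ge x$ when $x\ge 0$ has bounded pull-down, and unboundedness of $\Psi(x)$ for unbounded $x\ge 0$); they say nothing about the pull-down of the non-positive partial isometry $w$, so they cannot ``force the local pull-downs to be bounded'' as you assert. What actually makes the promotion work is that $w=(1\otimes e_B)w$, so that under the isometry $(1\otimes e_B)a\mapsto a\xi$ identifying $(1\otimes e_B)L^2(\langle A\vnotimes M,e_{A\vnotimes B}\rangle,\Tr)$ with $L^2(A\vnotimes M)$, the candidate $v$ satisfies $v^*v=(I\otimes\Psi)(w^*w)=(I\otimes\Psi)(P)$; only through an identity of this kind does boundedness of $(I\otimes\Psi)(P)$ give boundedness of $v$, via the polar-decomposition remark made before the paper's proof. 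The paper implements precisely this by working with $\eta=JV^*\xi$ and showing $\eta^*\eta\in Z(B)q$ with $\|\eta\|\le 1$; you would need to prove the corresponding identity rather than cite the order lemmas. In addition, before you can invoke injectivity of $x\mapsto (1\otimes e_B)x$ you must first establish $(1\otimes e_B)v=w$ (the computation \eqref{gpeq4.17}), and the assertions that $v$ is a partial isometry with $v^*v,\,vv^*\in A\vnotimes B$ and $v(A\vnotimes B)v^*\subseteq A\vnotimes B$ require the arguments of \eqref{gpeq4.18}--\eqref{gpeq4.21}; none of these follow from bimodularity of $I\otimes\Psi$ alone.
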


Before embarking on the proof, let us recall that, for a finite von Neumann algebra $M$ with a faithful normal trace $\tau$, we regard the Hilbert space $L^2(M)$ as the completion of $M$ in the norm $\|x\|_{2,\tau}=\tau(x^*x)^{1/2}$ and $L^1(M)$ as the completion of $M$ in the norm $\|x\|_{1,\tau}=\tau(|x|)$. The Cauchy-Schwarz inequality gives $\|xy^*\|_{1,\tau}\leq\|x\|_{2,\tau}\|y\|_{2,\tau}$ for $x,y\in M$ and so this inequality allows us to define $\zeta\eta^*\in L^1(M)$ for $\zeta,\eta\in L^2(M)$. In particular, if $(y_n)$ is a sequence in $M$ converging to $\eta\in L^2(M)$, then $y_n^*y_n\rightarrow \eta^*\eta$ in $L^1(M)$.  

Recall too that we can regard elements of $L^2(M)$ as unbounded operators on $L^2(M)$ affiliated to $M$. The only fact we need about these unbounded operators is that if $\eta\in L^2(M)$ satisfies $\eta^*\eta\in M$ (regarded as a subset of $L^1(M)$), then in fact $\eta\in M$. This follows as $\eta$ has a polar decomposition $v(\eta^*\eta)^{1/2}$, where $v$ is a partial isometry in $M$ and $(\eta^*\eta)^{1/2}$ is an element of $L^2(M)$, which lies in $M$ if $\eta^*\eta$ does.

\begin{proof}[Proof of Theorem \ref{gpthm4.1}.]
 The first case that we will consider is where $A={\bb C}$ and $\Phi$ is the identity map. Then the hypothesis becomes
\begin{equation}\label{gpeq4.2}
\Tr(Pr) \le\tau(qr)
\end{equation}
for all projections $r\in Z(B)q$. Since $P\precsim e_B$, there exists a partial isometry $V\in \langle M,e_B\rangle$ such that $P=V^*V$ and $VV^*\le e_B$. Define the map $\theta\colon \ qBq\to Be_B$ by
\begin{equation}\label{gpeq4.3}
\theta(qbq) = VqbqV^* = e_BVqbqV^*e_B,\qquad b\in B.
\end{equation}
Then $\theta$ is a $*$-homomorphism since $V^*V$ commutes with $qBq$, and so there is a $*$-homomorphism $\phi\colon \ qBq\to B$ so that $\theta(qBq) = \phi(qbq)e_B$ for $qbq\in qBq$. Thus
\begin{equation}\label{gpeq4.4}
 qbqV^* = qbqV^*VV^* = V^*VqbqV^*= V^*e_B\phi(qbq) = V^*\phi(qbq)
\end{equation}
for $qbq\in qBq$. Now define $\eta\in L^2(M)$ by $\eta = JV^*\xi$, and observe that $\eta\ne 0$ since $VJ\eta = VV^*\xi = b_0\xi$, where $VV^* = b_0e_B$ for some $b_0\in B$. If we apply \eqref{gpeq4.4} to $\xi$, then the result is
\begin{equation}\label{gpeq4.5}
 qbqJ\eta = V^*\phi(qbq)\xi = V^*J\phi(qb^*q)J\xi= J\phi(qb^*q)JV^*\xi= J\phi(qb^*q)\eta,\qquad qbq\in qBq,
\end{equation}
where we have used $\langle M,e_B\rangle = (JBJ)'$ to commute $J\phi(qb^*q)J$ with $V^*$. Multiply \eqref{gpeq4.10} on the left by $J$ and replace $b$ by $b^*$ to obtain
\begin{equation}\label{gpeq4.6}
 \eta qbq = \phi(qbq)\eta,\qquad qbq\in qBq.
\end{equation}
Taking $b=1$, \eqref{gpeq4.5} becomes
\begin{equation}\label{gpeq4.7}
 JqJ\eta = \phi(q)\eta.
\end{equation}
 so \begin{equation}\label{gpeq4.8}
 JqV^*\xi = \phi(q)\eta.
\end{equation}
Multiply on the left by $VJ$ to obtain
\begin{equation}\label{gpeq4.9}
 \phi(q)\xi = VJ\phi(q)\eta,
\end{equation}
showing that $\phi(q)\eta \ne 0$. 
From \eqref{gpeq4.7}, $\phi(q)\eta q = \phi(q)\eta\ne 0$, and this allows us to assume in \eqref{gpeq4.6} that the vector $\eta$ is nonzero and satisfies $\eta q=\eta$, $\phi(q)\eta = \eta$ by replacing $\eta$ with $\phi(q)\eta q$ if necessary. For unitaries $u\in qBq$, \eqref{gpeq4.6} becomes 
\begin{equation}\label{gpeq4.10}
 \phi(u^*)\eta u = \phi(u^*) \phi(u)\eta = \phi(q)\eta = \eta.
\end{equation}

Choose a sequence $\{x_n\}^\infty_{n=1}$ from $M$ such that $x_n\xi\to \eta$ in $\|\cdot\|_{2,\tau}$-norm. Since $\phi(q)\eta q = \eta$, we may assume that $\phi(q)x_nq = x_n$ for $n\ge 1$. Let $y_n$ be the element of minimal $\|\cdot\|_{2,\tau}$-norm in $\ovl{\rm conv}^w\{\phi(u^*)x_nu\colon \ u\in {\cl U}(qbq)\}$. Since
\begin{equation}\label{gpeq4.11}
 \|\phi(u^*)x_nu\xi - \eta\|_{2,\tau} = \|\phi(u^*)(x_n\xi-\eta)u\|_{2,\tau}\le \|x_n\xi-\eta\|_{2,\tau}
\end{equation}
for all $u\in {\cl U}(qBq)$, we see that $\|y_n\xi-\eta\|_{2,\tau} \le \|x_n\xi-\eta\|_{2,\tau}$, so $y_n\xi\to \eta$ in $\|\cdot\|_{2,\tau}$-norm and $\phi(u^*)y_nu = y_n$ for $n\ge 1$ by the choice of $y_n$. Then $y_nu = \phi(u)y_n$ for $u\in {\cl U}(qBq)$, so
\begin{equation}\label{gpeq4.12}
 y_nqbq = \phi(qbq)y_n, \qquad n\ge 1,\quad qbq\in qBq.
\end{equation}
Thus
\begin{equation}\label{gpeq4.13}
 y^*_ny_nqbq = y^*_n\phi(qbq)y_n,\qquad n\ge 1,\quad qbq\in qBq,
\end{equation}
and this implies that $y^*_ny_n\in(qBq)'\cap qMq = (B'\cap M)q = Z(B)q$ for each $n\ge 1$. The discussion preceding the proof ensures that $y^*_ny_n\to \eta^*\eta$ in $L^1(M)$, so we see that $\eta^*\eta \in L^1(Z(B)q)$. For each $z\in Z(B)q$,
\begin{align}
|\tau(\eta^*\eta zq)| &= \lim_{n\to\infty} |\tau(y^*_ny_nqz)|
= \lim_{n\to\infty} |\langle y_nqz\xi, y_n\xi\rangle|\notag\\
&= |\langle Jz^*qJ\eta,\eta\rangle|
= |\langle Jz^*qJJV^*\xi, JV^*\xi\rangle|\notag\\
&= |\langle z^*qV^*\xi, V^*\xi\rangle|= |\langle Vz^*qV^*\xi,\xi\rangle|\notag\\
&= |\langle \phi(z^*q)\xi,\xi\rangle|\
\label{gpeq4.14}
= |\tau(\phi(zq))|.
\end{align}

Now
\begin{align}
\Tr(Pzq) &=\Tr(V^*Vzq) =\Tr(VzqV^*)\notag\\
\label{gpeq4.15}
&= \Tr(\phi(zq)e_B) = \tau(\phi(zq)).
\end{align}
Thus, from \eqref{gpeq4.14}, \eqref{gpeq4.15} and the hypothesis \eqref{gpeq4.2},
\begin{equation}\label{gpeq4.16}
 |\tau(\eta^*\eta r)| = |\Tr(Pr)|\le \tau(r)
\end{equation}
for all projections $r\in Z(B)q$. Since $Z(B)q$ is abelian, simple measure theory allows us to conclude from \eqref{gpeq4.16} that $\eta^*\eta\in Z(B)q$ (rather than just $L^1(Z(B)q))$ and so $\eta \in M$, by the discussion prior to the start of the proof. Moreover, \eqref{gpeq4.16} also gives $\|\eta\|\le 1$, by taking $r$ to be the spectral projection of $\eta^*\eta$ for the interval $(c,\infty)$ where $c>1$ is arbitrary.

Since $\eta\in L^2(M)$ has been proved to lie in $M$, we rename this nonzero operator as $x\in M$. From above, $\|x\|\le 1$ and $x^*x\in Z(B)q$. Since $JV^*\xi = \eta = x\xi=xJ\xi$,  for $y\in M$ and $b\in B$,
\begin{align}
 \langle (V-x)y\xi,b\xi\rangle &= \langle y\xi, V^*Jb^*J\xi\rangle - \langle xy\xi,b\xi\rangle\notag\\
&= \langle y\xi, Jb^*JV^*\xi\rangle - \langle xy\xi,b\xi\rangle\notag\\
&= \langle y\xi, Jb^*xJ\xi\rangle - \langle xy\xi,b\xi\rangle\notag\\
&= \langle y\xi, x^*b\xi\rangle - \langle xy\xi, b\xi\rangle
\label{gpeq4.17}
= 0,
\end{align}
and so $e_BV = e_Bx$, implying that $V=e_Bx$. Thus $P=V^*V = x^*e_Bx$. Since $P^2=P$,
\begin{equation}\label{gpeq4.18}
 x^*e_Bx = x^*e_Bxx^*e_Bx = x^*{\bb E}_B(xx^*)e_Bx,
\end{equation}
so the pull down map gives
\begin{equation}\label{gpeq4.19}
 x^*x = x^*{\bb E}_B(xx^*)x.
\end{equation}
This equation is $x^*(1-{\bb E}_B(xx^*))x = 0$, so $(1-{\bb E}_B(xx^*))^{1/2}x = 0$. Thus ${\bb E}_B(xx^*)x=x$. If we multiply on the right by $x^*$ and apply ${\bb E}_B$, then we conclude that ${\bb E}_B(xx^*)$ is a projection. Moreover,
\begin{equation}\label{gpeq4.20}
 {\bb E}_B(xx^*)xx^* = xx^*
\end{equation}
so ${\bb E}_B(xx^*)\ge xx^*$ since $\|x\|\le 1$. The trace then gives equality, and so $x$ is a partial isometry with $x^*x$, $xx^*\in B$. Since $x=xq$ and $x^*e_Bx = P$, which commutes with $qBq$, we obtain
\begin{equation}\label{gpeq4.21}
xbx^*e_B = xqbqx^*xx^*e_B= xqbqx^*e_Bxx^*= xx^*e_Bxqbqx^*= e_Bxbx^*,\qquad b\in B,
\end{equation}
showing that $xBx^*\subseteq B$. Thus $x\in \oneGN (B)$ and $P=x^*e_Bx$. This completes the proof when $A=\mathbb C$ and $\Phi$ is the identity map.

The second case is when $A$ is an arbitrary abelian von Neumann algebra, and $\Phi$ is bounded, so we may assume that $\Phi$ is a state on $A$ since \eqref{gpeq4.1} is unaffected by scaling. The result now follows from the first case by replacing the inclusion $B\subseteq M$ by $A\vnotimes B \subseteq A\vnotimes M$. The trace on $A\vnotimes M$ is $\Phi\otimes\tau$ and $e_{A\vnotimes B}$ is $1\otimes e_B$, so the canonical trace on $\langle A\vnotimes M, e_{A\vnotimes B}\rangle$ is $\Phi \otimes Tr$.

The last case is where $\Phi$ is a faithful normal semifinite weight. Then there is a family $\{f_\lambda\}_{\lambda\in\Lambda}$ of orthogonal projections in $A$ with sum 1 such that $\Phi(f_\lambda) < \infty$ for each $\lambda\in\Lambda$. If $\Phi_\lambda$ denotes the restriction of $\Phi$ to $Af_\lambda$, then $\Phi_\lambda$ is bounded. If we replace $P,A,\Phi$ and $q$ by respectively $P(f_\lambda\otimes 1)$, $Af_\lambda$, $\Phi_\lambda$ and $q(f_\lambda\otimes 1)$, then we are in the second case. Thus there exists, for each $\lambda\in\Lambda$, a partial isometry $v_\lambda\in \oneGN _{Af_\lambda\vnotimes M}(Af_\lambda \vnotimes B)$ so that $P(f_\lambda\otimes 1) = v^*_\lambda (e_{Af_\lambda\vnotimes B})v_\lambda$. The central support of $v_\lambda$ lies below $f_\lambda\otimes 1$ so we may define $v\in \oneGN _{A\vnotimes M}(A\vnotimes B)$ by $v = \sum\limits_{\lambda\in\Lambda} v_\lambda$, and it is routine to check that $P = v^*e_Bv$.
\end{proof}

Theorem \ref{gpthm4.1} characterizes those projections in the basic construction which arise from intertwiners.
\begin{cor}
Given an inclusion $B\subseteq M$ of finite von Neumann algebras with $B'\cap M\subseteq B$ and a projection $q\in B$, a projection $P\in (B'\cap \langle M,e_B\rangle)q$ is of the form $v^*e_Bv$ for some intertwiner $v\in\oneGN(B)$ if and only if $P\precsim e_B$ in $\langle M,e_B\rangle$ and $\Tr(Pr)\leq\tau(qr)$ for all projections $r\in Z(B)q$. Furthermore in this case the domain projection $v^*v$ must lie in $Z(B)q$.
\end{cor}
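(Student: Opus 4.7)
My plan is to prove both implications. For the reverse direction, I would directly specialize Theorem \ref{gpthm4.1} by taking $A=\mathbb{C}$ and $\Phi$ the identity functional. Under this identification $1\otimes e_B$ becomes $e_B$, $\mathbb{C}\vnotimes M=M$, and the hypotheses of the corollary are precisely those of the theorem; the conclusion produces the intertwiner $v\in\oneGN_M(B)$ satisfying $P=v^*e_Bv$.

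For the forward direction, suppose $P=v^*e_Bv$ with $v\in\oneGN(B)$ and $P\in(B'\cap\langle M,e_B\rangle)q$. To show $P\precsim e_B$, I would take $w=e_Bv$ and verify that it is a partial isometry implementing this subequivalence: using that $e_B$ is a projection, $w^*w=v^*e_B^2v=v^*e_Bv=P$, which is a projection by Lemma \ref{gplem3.1}(1), while using that $vv^*\in B$ commutes with $e_B$ gives $ww^*=e_Bvv^*e_B=vv^*e_B\le e_B$.

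For the trace inequality, Lemma \ref{gplem3.1}(2) forces $v^*v\in Z(B)q$, and applying the pull-down map to the identity $Pq=P$ yields $v^*v\le q$. For any projection $r\in Z(B)q$, cyclicity of $\Tr$, the commutation of $B$ with $e_B$, and the identity $\Tr(e_By)=\tau(y)$ for $y\in M$ combine to give
\[
\Tr(Pr)=\Tr(e_B\,vrv^*)=\tau(vrv^*)=\tau(rv^*v).
\]
Since $r$, $v^*v$, and $q$ all lie in the abelian algebra $Z(B)$ with $v^*v\le q$, one has $rv^*v\le rq$, hence $\tau(rv^*v)\le\tau(qr)$, as required. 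The final clause $v^*v\in Z(B)q$ is immediate from Lemma \ref{gplem3.1}(2).

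The forward direction is essentially bookkeeping with the pull-down map, cyclicity, and the module property of $e_B$, while the reverse direction is a direct citation of Theorem \ref{gpthm4.1}. The only point requiring attention is recognizing that the seemingly asymmetric trace inequality reduces to the scalar projection inequality $v^*v\le q$ in the abelian algebra $Z(B)$, a reduction available precisely because the hypothesis $B'\cap M\subseteq B$ forces $v^*v$ to lie in $Z(B)$ rather than merely in $B$.
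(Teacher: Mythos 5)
Your argument is correct and follows essentially the same route as the paper: the existence direction is the specialization $A=\mathbb C$, $\Phi=\mathrm{id}$ of Theorem \ref{gpthm4.1}, and the converse uses Lemma \ref{gplem3.1}(2), the equivalence $v^*e_Bv\sim vv^*e_B\le e_B$ (which you make explicit via $w=e_Bv$), and the trace identity $\Tr(v^*e_Bvr)=\tau(v^*vr)\le\tau(qr)$. One trivial slip: $q$ need not lie in $Z(B)$, but your inequality only needs $v^*v\le q$ together with the fact that $r=rq$ commutes with $v^*v\in Z(B)$, so nothing is affected.
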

\begin{proof}
Taking $A=\mathbb C$ and $\Phi$ to be the identity in theorem \ref{gpthm4.1} shows that any projection $P$ satisfying the conditions of the corollary is of the form $v^*e_Bv$ for some intertwiner $v\in\oneGN(B)$. Lemma \ref{gplem3.1} (ii) then shows that $v^*v\in Z(B)q$. Conversely, given an intertwiner $v\in\oneGN(B)$, Lemma \ref{gplem3.1} (ii) shows that $v^*e_Bv\in (B'\cap\langle M,e_B\rangle)q$ precisely when $v^*v\in Z(B)q$. The other two conditions of the corollary follow as $v^*e_Bv\sim vv^*e_B\leq e_B$ in $\langle M,e_B\rangle$, and for a projection $r\in Z(B)q$, $\Tr(v^*e_Bvr)=\tau(v^*vr)\leq\tau(qr)$.
\end{proof}

The tracial hypothesis \eqref{gpeq4.1} of Theorem \ref{gpthm4.1} is an extra ingredient in this theorem as compared with \cite[Proposition 2.7]{Chifan.Normalisers}. The following example shows that Theorem \ref{gpthm4.1} can fail without this hypothesis. 

\begin{exm}\label{gpexm4.2}
Let $R$ be the hyperfinite $\text{II}_1$ factor and fix an outer automorphism $\theta$ of period two. Let $M = {\bb M}_3\otimes R$ and let
\[
B = \left\{\begin{pmatrix}
            a&0&0\\ 0&b&c\\ 0&\theta(c)&\theta(b)
           \end{pmatrix}\colon\ a,b,c\in R\right\} \subseteq M.
\]
Note that $B\cong R\oplus (R\rtimes_\theta {\bb Z}_2)$. It is straightforward to verify that $B'\cap M = Z(B) = {\bb C}e_{11} \oplus {\bb C}(e_{22}+e_{33})$ where $\{e_{i,j}\}^3_{i,j=1}$ are the matrix units. Let $P \in \langle M,e_B\rangle$ be $(\sqrt 2\ e_{12}) e_B(\sqrt 2\ e_{21})$. Since ${\bb E}_B(e_{22}) = (e_{22}+e_{33})/2$, $P$ is a projection, and it is routine to verify that $P$ commutes with $B$. If there is a nonzero intertwiner $v\in M$ such that $v^*e_Bv\leq
P$, then $v=ve_{11}$. Direct calculation shows that $v$
would then have the form $we_{11}$ for some partial isometry $w\in
R$, so $v^*e_Bv$ would be $qe_{11}e_B$ for some projection $q\in R$. However, this nonzero
projection is orthogonal to $P$ and so cannot lie under it. Thus the conclusion of Theorem \ref{gpthm4.1} fails in this case. Note that $\Tr(Pe_{11}) = \tau(2e_{11}) = 2/3$, while $\tau(e_{11}) = 1/3$, so the tracial hypothesis of Theorem \ref{gpthm4.1} is not satisfied.

It is worth noting 
 that $B'\cap \langle M,e_B\rangle$ can be explicitly calculated in this case. This algebra is abelian and five dimensional with minimal projections $e_{11}e_B$, $(1-e_{11})e_B$, $(1-e_{11})(1-e_B)$, $(\sqrt 2\ e_{12}) e_B(\sqrt 2\ e_{21})$, and $e_{21}e_Be_{12}+e_{31}e_Be_{13}$. The corresponding $B$--bimodules in $L^2(M)$ are  generated respectively by the  vectors $e_{11} $, $e_{22}+e_{33} $, $e_{22}-e_{33} $, $e_{12}$, and  
$e_{21}$. $\hfill\square$
\end{exm}

For the remainder of the section we fix inclusions $B_i\subseteq M_i$ of finite von Neumann algebras satisfying $B'_i\cap M_i\subseteq B_i$ for $i=1,2$, and we denote the inclusion $B_1\vnotimes B_2 \subseteq M_1\vnotimes M_2$ by $B\subseteq M$. 
 For $i=1,2$, let \[S_i = \sup\{P_j\in Z(B'_i \cap \langle M_i,e_{B_i}\rangle)\colon \Tr_i(P_j) < \infty\}.\] Note that $S_i$ acts as the identity on $L^2(Z(B'_i\cap \langle M_i,e_{B_i}\rangle),\Tr_i)$. Given $v\in\oneGN_M(B)$, the projection $P_v$ of Definition \ref{DefPv} satisfies $\Tr(P_v)\le 1$, and $P_v\in Z(B'\cap \langle M,e_B\rangle)$ by Lemma \ref{gplem4.3}. It follows from Lemma \ref{gplem3.7} that $P_v\le S_1\otimes S_2$. Although $\Tr_i$ restricted to $Z(B'_i \cap \langle M_i,e_{B_i}\rangle)$ might not be semifinite, it does have this property on the abelian von Neumann algebra $A_i = Z(B'_i\cap \langle M_i,e_{B_i}\rangle)S_i$ by the choice of $S_i$. Moreover, each $P_v$ is an element of $A_1\vnotimes \langle M_2,e_{B_2}\rangle$. We need two further projections which we define below.

\begin{defn}\label{DefQ}
For $i=1,2$, let $\mathcal P_i$ denote the collection of projections $R\in Z(B'_i \cap \langle M_i, e_{B_i}\rangle)$ which are expressible as
\[
 R = \sum_{n\ge 1} v^*_ne_{B_i}v_n,\qquad v_n\in \oneGN (B_i)
\]
where $\{v^*_nv_n\}_{n\ge 1}$ is an orthogonal set of projections in $B_i$. Such a projection satisfies $\Tr_i(R)\le 1$. Let $Q_i$ be the supremum of the projections in $\mathcal P_i$,  so $Q_i\le S_i$.$\hfill\square$ 
\end{defn}
Our next objective is to show that each projection $P_v$ arising from an intertwiner lies below $Q_1\otimes Q_2$. For the next two lemmas, let $v\in\oneGN_M(B)$ be fixed. We continue to employ the notation $P_v$ for the projection $\sum\limits_n v^*_ne_Bv_n \in Z(B'\cap \langle M,e_B\rangle)$ which satisfies $P_vv^*v = v^*e_Bv$. Let $A_1$ be the abelian von Neumann algebra $Z(B'_1\cap \langle M_1,e_{B_1}\rangle)S_1$ on which $\Phi$, the restriction of $\Tr_1$, is semifinite. 

\begin{lem}\label{gplem4.6}
 If $r$ is a projection in $A_1\vnotimes Z(B_2)$, then
\begin{equation}\label{gpeq4.30}
 (\Phi\otimes\Tr_2)(P_vr) \le (\Phi\otimes\tau_2)(r).
\end{equation}
\end{lem}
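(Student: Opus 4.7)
\textbf{Proof plan for Lemma \ref{gplem4.6}.} The core idea is to apply the pull-down in the second factor to replace $P_v$ (an $L^1$ object) by a bounded operator, then pair against $r$ using the pre-adjoint duality of $I\otimes\Psi_2$ established in the setup preceding Lemma \ref{gplem2.3}. First I would compute $\Psi(P_v)$: from the definition of $P_v=\sum_{n\ge 0}v_n^*e_Bv_n$ and the formula $\Psi(xe_By)=xy$,
\[
(\Psi_1\otimes\Psi_2)(P_v) \;=\; \sum_{n\ge 0}v_n^*v_n \;=\; \sum_{n\ge 0}p_n \;=\; z,
\]
the central support of $v^*v$, which lies in $B\subseteq M$ and satisfies $\|z\|\le 1$. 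Here I use Proposition \ref{ListOfFacts}(xiii) to identify $\Psi$ as $\Psi_1\otimes\Psi_2$.

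With $P_v\in L^1(\langle M,e_B\rangle)^+\cap\langle M,e_B\rangle$ (since $\Tr(P_v)\le 1$) and the bound $\|(\Psi_1\otimes\Psi_2)(P_v)\|\le 1$, I would invoke Lemma \ref{gplem2.5} to conclude that $(I\otimes\Psi_2)(P_v)\in\langle M_1,e_{B_1}\rangle\vnotimes M_2$ with operator norm at most $1$. Since $P_v\in Z(B'\cap\langle M,e_B\rangle)$ and $P_v\le S_1\otimes S_2$, Tomita's commutation theorem places $P_v$ in $A_1\vnotimes\bigl(Z(B_2'\cap\langle M_2,e_{B_2}\rangle)S_2\bigr)$, so the first leg of $(I\otimes\Psi_2)(P_v)$ still lies in $A_1$ and $(I\otimes\Psi_2)(P_v)\in A_1\vnotimes M_2$.

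Next I would apply the pre-adjoint relation. Specifically, $I\otimes\Psi_2$ is the pre-adjoint of the inclusion $A_1\vnotimes M_2\hookrightarrow A_1\vnotimes\langle M_2,e_{B_2}\rangle$ (taking $N=A_1$ with semifinite trace $\Phi$ in the discussion preceding Lemma \ref{gplem2.3}). Since $r\in A_1\vnotimes Z(B_2)\subseteq A_1\vnotimes M_2$ and $P_v\in L^1(A_1\vnotimes\langle M_2,e_{B_2}\rangle,\Phi\otimes\Tr_2)$, the duality gives
\[
(\Phi\otimes\Tr_2)(P_v r) \;=\; (\Phi\otimes\tau_2)\bigl((I\otimes\Psi_2)(P_v)\,r\bigr).
\]

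Finally, $(I\otimes\Psi_2)(P_v)$ is a positive operator of operator norm at most $1$, hence $(I\otimes\Psi_2)(P_v)\le 1$. Since $r$ is a projection, traciality of $\Phi\otimes\tau_2$ yields
\[
(\Phi\otimes\tau_2)\bigl((I\otimes\Psi_2)(P_v)\,r\bigr) \;=\; (\Phi\otimes\tau_2)\bigl(r(I\otimes\Psi_2)(P_v)r\bigr) \;\le\; (\Phi\otimes\tau_2)(r),
\]
which is the desired inequality. The only substantive difficulty is obtaining the bounded representative in $A_1\vnotimes M_2$ with norm at most $1$; this is precisely what Lemma \ref{gplem2.5} was designed to supply, and everything else is a short duality-plus-traciality computation.
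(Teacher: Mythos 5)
Your proposal is correct and takes essentially the same route as the paper's proof: both compute $(\Psi_1\otimes\Psi_2)(P_v)=\sum_n v_n^*v_n$, a projection of norm one, invoke Lemma \ref{gplem2.5} to get $\|(I\otimes\Psi_2)(P_v)\|\le 1$ in $A_1\vnotimes M_2$, and then conclude by pairing against $r$. The paper phrases the last step as a direct-integral estimate over $\Omega$ and routes the trace identity through the $M_2$-bimodularity of $\Psi_2$ followed by trace preservation, which is only cosmetically different from your pre-adjoint-duality-plus-traciality computation.
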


\begin{proof}
There is a measure space $(\Omega,\Sigma,\mu)$ so that $A_1$ corresponds to $L^\infty(\Omega)$ while $\Phi$ is given by integration with respect to the $\sigma$-finite measure $\mu$. Then $P_v$ is viewed as a projection-valued function $P_v(\omega)$, with the same representation $r(\omega)$ for $r$. For $i=1,2$, let $\Psi_i$ be the pull down map for $\langle M_i,e_{B_i}\rangle$. Then $(\Psi_1\otimes \Psi_2)(P_v) = \sum\limits_n v^*_nv_n$ which is a projection, so has norm 1. By Lemma \ref{gplem2.5}, $\|(I\otimes \Psi_2)(P_v)\|\le 1$, and so this element of $A_1\vnotimes M_2$ can be represented as a function $f(\omega)$ with $\|f(\omega)\|\le 1$ almost everywhere. It follows that
\begin{align}
 (\Phi\otimes\tau_2)(r) &= \int_\Omega \tau_2(r(\omega))d\mu(\omega)
\ge \int_\Omega \tau_2(r(\omega)f(\omega)) d\mu(\omega)\notag\\
&= (\Phi\otimes\tau_2)((I\otimes\Psi_2)(P_v)r)
= (\Phi\otimes \tau_2) ((I\otimes\Psi_2)(P_vr))\notag\\
\label{gpeq4.31}
&= (\Phi\otimes\Tr_2)(P_vr),
\end{align}
where the penultimate equality is valid because $r\in A_1\vnotimes Z(B_2)$ and the $M_2$-bimodular property of $\Psi_2$ applies.
\end{proof}

\begin{lem}\label{gplem4.7}
 For $v\in \oneGN (B)$, the associated projection $P_v\in Z(B'\cap \langle M,e_B\rangle)$ satisfies $P_v\le Q_1\otimes Q_2$.
\end{lem}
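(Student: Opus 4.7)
The plan is to prove $P_v\leq 1\otimes Q_2$; applying the same argument with the two tensor factors interchanged yields $P_v\leq Q_1\otimes 1$, and since the commuting projections $Q_1\otimes 1$ and $1\otimes Q_2$ have product $Q_1\otimes Q_2$, the desired inequality $P_v\leq Q_1\otimes Q_2$ follows.

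I will apply Theorem \ref{gpthm4.1} to $P_v$ with the abelian algebra $A=A_1$ carrying the faithful normal semifinite weight $\Phi=\Tr_1|_{A_1}$ (semifinite by the choice of $S_1$), the finite inclusion $B_2\subseteq M_2$, and the cut-down projection $q=1$. The containment $P_v\in A_1\vnotimes(B_2'\cap\langle M_2,e_{B_2}\rangle)$ is recorded in the discussion preceding Definition \ref{DefQ}, and the tracial hypothesis of Theorem \ref{gpthm4.1}, namely $(\Phi\otimes\Tr_2)(P_v r)\leq(\Phi\otimes\tau_2)(r)$ for projections $r\in A_1\vnotimes Z(B_2)$, is precisely Lemma \ref{gplem4.6}.

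The main obstacle will be the remaining hypothesis $P_v\precsim 1\otimes e_{B_2}$ in $A_1\vnotimes\langle M_2,e_{B_2}\rangle$. I plan to verify it by disintegration. Identifying $A_1\cong L^\infty(\Omega_1,\mu_1)$ for a $\sigma$-finite measure space realising $\Phi$, we have $A_1\vnotimes\langle M_2,e_{B_2}\rangle\cong\int^\oplus_{\Omega_1}\langle M_2,e_{B_2}\rangle\,d\mu_1$ and $P_v$ decomposes as a measurable field $\omega_1\mapsto P_v(\omega_1)$ of projections in $Z(B_2'\cap\langle M_2,e_{B_2}\rangle)$. Applying Lemma \ref{gplem4.6} with $r=f\otimes z_0$ for $f\in A_1$ a projection of finite $\Phi$-weight and $z_0\in Z(B_2)$ a projection, Fubini combined with a countable dense subset of projections in $Z(B_2)$ yields, for a.e.\ $\omega_1$, the inequality $\Tr_2(P_v(\omega_1)z_0)\leq\tau_2(z_0)$ simultaneously for every $z_0\in Z(B_2)$. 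Together with the comparison theory for the semifinite algebra $\langle M_2,e_{B_2}\rangle$, whose centre is $JZ(B_2)J$ by Proposition \ref{ListOfFacts}(iv), and a measurable selection of implementing partial isometries, this will produce $P_v\precsim 1\otimes e_{B_2}$ globally in $A_1\vnotimes\langle M_2,e_{B_2}\rangle$.

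Theorem \ref{gpthm4.1} then yields an intertwiner $w\in\oneGN_{A_1\vnotimes M_2}(A_1\vnotimes B_2)$ satisfying $P_v=w^*(1\otimes e_{B_2})w$. Disintegrating $w=(w(\omega_1))$ and using the separability of $B_2$ to handle a common null set, the intertwiner relations force $w(\omega_1)\in\oneGN_{M_2}(B_2)$ for a.e.\ $\omega_1$, with $P_v(\omega_1)=w(\omega_1)^*e_{B_2}w(\omega_1)$ in each fibre. For each such $\omega_1$, the associated projection $P_{w(\omega_1)}\in Z(B_2'\cap\langle M_2,e_{B_2}\rangle)$ supplied by Definition \ref{DefPv} lies in $\mathcal P_2$, so $P_{w(\omega_1)}\leq Q_2$; Lemma \ref{gplem4.3} then gives $P_v(\omega_1)=P_{w(\omega_1)}w(\omega_1)^*w(\omega_1)\leq P_{w(\omega_1)}\leq Q_2$. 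Reassembling over $\omega_1$ delivers $P_v\leq 1\otimes Q_2$, completing the proof modulo the symmetric step.
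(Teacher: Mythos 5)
Your overall skeleton---reduce to $P_v\leq 1\otimes Q_2$ by feeding $P_v$ into Theorem \ref{gpthm4.1} over the abelian algebra $A_1$ with the weight $\Tr_1|_{A_1}$, using Lemma \ref{gplem4.6} for the tracial hypothesis and then disintegrating over $\Omega_1$---matches the paper's. The gap is the hypothesis you defer to ``comparison theory'': $P_v\precsim 1\otimes e_{B_2}$ in $A_1\vnotimes\langle M_2,e_{B_2}\rangle$. The tracial control supplied by Lemma \ref{gplem4.6} is over cut-downs by projections $r\in A_1\vnotimes Z(B_2)$, but $1\otimes Z(B_2)$ is \emph{not} contained in the centre of $A_1\vnotimes\langle M_2,e_{B_2}\rangle$: by Proposition \ref{ListOfFacts} (iv) that centre is $A_1\vnotimes JZ(B_2)J$, and for a projection $z_0\in Z(B_2)$ the quantities $\Tr_2(P_v(\omega)z_0)$ and $\Tr_2(P_v(\omega)Jz_0J)$ are genuinely different. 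The generalized comparison theorem converts trace inequalities into subequivalence only after cutting by \emph{central} projections, so the fibrewise inequalities $\Tr_2(P_v(\omega)z_0)\leq\tau_2(z_0)$, $z_0\in Z(B_2)$, do not yield $P_v(\omega)\precsim e_{B_2}$. (If that implication were available, the subequivalence hypothesis in Theorem \ref{gpthm4.1} would essentially be absorbed by the tracial one and most of the machinery of Section \ref{gpsec4} would be unnecessary.) As written, this step is asserted rather than proved, and the tool you name does not apply to it.

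The paper avoids ever comparing all of $P_v$ with $1\otimes e_{B_2}$. It runs an exhaustion: take a maximal orthogonal family $\{q_n\}$ of projections in $Z(B_1)\vnotimes B_2$ with $P_vq_n=w_n^*(1\otimes e_{B_2})w_n$ for intertwiners $w_n$ of $A_1\vnotimes B_2$. If $(1-q)P_v\neq 0$, one needs only \emph{some} nonzero subprojection $Q\precsim 1\otimes e_{B_2}$ of it, which exists simply because $1\otimes e_{B_2}$ has full central support (Proposition \ref{ListOfFacts} (v)). The real work is then to replace $Q=fP_v$ (Lemma \ref{gplem4.5} (iii)) by a nonzero subprojection of the form $bP_v$ with $b$ a projection in $Z(B_1)\vnotimes B_2$---this is exactly where Lemma \ref{gplem4.4} enters---because Theorem \ref{gpthm4.1} requires the cut-down projection to lie in $A\vnotimes B$, here $Z(B_1)\vnotimes B_2$, rather than merely in $B_1'\cap N(P_v)$. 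Applying Theorem \ref{gpthm4.1} to $bP_v$ with $q=b$ then contradicts maximality, so $P_v=\sum_n w_n^*(1\otimes e_{B_2})w_n$, and only at that stage does the disintegration (finiteness of $\Tr_2(P_v(\omega))$ almost everywhere) deliver $P_v(\omega)\leq Q_2$ and hence $P_v\leq 1\otimes Q_2$. Your concluding fibrewise argument via $P_{w(\omega)}\in\mathcal P_2$ is fine, but to reach it you must either supply an exhaustion of this kind or give an honest proof that $P_v\precsim 1\otimes e_{B_2}$; neither is present.
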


\begin{proof}
The remarks preceding Definition \ref{DefQ} show that $P_v\leq S_1\otimes S_2$. We will prove the
stronger inequality $P_v\leq  
 S_1\otimes Q_2$, which is sufficient to establish the result since  we will also have $P_v\le Q_1\otimes S_2$ by a symmetric argument.

As before, let $A_1$ denote $Z(B'_1\cap \langle M_1,e_{B_1}\rangle)S_1$ and let $\Phi$ be the restriction of $\Tr_1$ to $A_1$. Then, as noted earlier, $A_1$  is an abelian von Neumann algebra and $\Phi$ is a faithful normal semifinite weight on $A_1$. Let $\{q_n\}^\infty_{n=1}$ be a maximal family of nonzero orthogonal projections in $Z(B_1)\vnotimes B_2$ so that $P_vq_n =w^*_n(1\otimes e_{B_2}) w_n$ for partial isometries $w_n\in A_1\vnotimes M_2$ which are intertwiners of $A_1\vnotimes B_2$. Let $q=\sum\limits^\infty_{n=1} q_n$, defining $q$ to be $0$ if no such projections exist.  We will first show that $P_v \le q$, so suppose that $(1-q)P_v\ne 0$.

The central support of $1\otimes e_{B_2}$ in $A_1\vnotimes \langle M_2,e_{B_2}\rangle$ is 1, so there is a nonzero subprojection $Q$ of $(1-q)P_v$ in $A_1\vnotimes \langle M_2,e_{B_2}\rangle$ with $Q\precsim 1 \otimes e_{B_2}$ in this algebra. The projection $P_v$ has the form $P_v = \sum\limits_{n\ge 0} v^*_ne_Bv_n$ where $\sum\limits_{n\ge 0} v^*_nv_n$ is the central support $z\in Z(B)$ of $v^*v\in B$. With this notation, (\ref{DefNP}) becomes
\[
 N(P_v) = \{x\in zMz\colon \ xP_v=P_vx\}.
\]
By Lemma \ref{gplem4.5} (iii), there is a projection $f\in N(P_v)$ so that $Q = fP_v$. Both $Q$ and $P_v$ commute with $B_1\otimes 1$, so the relation $(b_1\otimes 1)Q-Q(b_1\otimes 1) = 0$ for $b_1\in B_1$ becomes $((b_1\otimes 1)f - f(b_1\otimes 1))P_v = 0$. The element $(b_1\otimes 1)f-f(b_1\otimes 1)$ lies in $N(P_v)$ so, by Lemma \ref{gplem4.5} (i), $(b_1\otimes 1)f = f(b_1\otimes 1)$ for all $b_1\in B_1$. This shows that $f\in B'_1\cap N(P_v)$. Moreover, $f=(1-q)f$ follows from the equation
\begin{equation}\label{gpeq4.32}
 0 = qQ = qfP_v,
\end{equation}
which implies that $qf=0$ since $qf\in N(P_v)$. Note that $(1-q)z\ne 0$, otherwise $zf=0$. Now
\begin{equation}
 B'_1 \cap N(P_v) \subseteq B'_1\cap M = Z(B_1)\vnotimes M_2
\end{equation}
and
\begin{equation}
\Big(Z(B_1)\vnotimes B_2\Big)' \cap\Big(Z(B_1)\vnotimes M_2\Big) = Z(B_1) \vnotimes Z(B_2) \subseteq Z(B_1)\vnotimes B_2.
\end{equation}
Thus the inclusion
\begin{equation}
 (1-q)z(Z(B_1) \vnotimes B_2)z(1-q) \subseteq (1-q)z(B'_1\cap N(P_v)) z(1-q)
\end{equation}
has the property that the first algebra contains its relative commutant in the second algebra, which is the hypothesis of Lemma \ref{gplem4.4}. Thus we may choose a nonzero projection $b\in (1-q)z(Z(B_1)\vnotimes B_2)z(1-q)$ with $b\precsim f$ in $(1-q)z(B'_1\cap N(P_v)) z(1-q)$. Let $w$ be a partial isometry in this algebra with $w^*w=b$ and $ww^*\le f$, and note that $w$ commutes with $P_v$ by definition of $N(P_v)$. Then
\begin{equation}\label{gpeq4.33}
 bP_v = w^*wP_v = P_vw^*wP_v\sim wP_vw^* = ww^*P_v \le fP_v
\end{equation}
in $A_1\vnotimes \langle M_2,e_{B_2}\rangle$. Since $b\le z$, Lemma \ref{gplem4.5} (i) ensures that $bP_v\ne 0$. Moreover, $bP_v\precsim 1\otimes e_{B_2}$ in $A_1\vnotimes \langle M_2,e_{B_2}\rangle$ since $fP_v$ has this property.

Consider now a projection $r\in (A_1\vnotimes Z(B_2))b$. The inequality
\begin{equation}\label{gpeq4.34}
 \Phi \otimes \Tr_2(P_vr) \le \Phi\otimes\tau(r)
\end{equation}
is valid by Lemma \ref{gplem4.6}. Thus the hypotheses of Theorem \ref{gpthm4.1} are satisfied with $P$ replaced by $bP_v$. We conclude that there is an element $w\in \oneGN _{A_1\vnotimes M_2}(A_1\vnotimes B_2)$ so that $bP_v=w^*(1\otimes e_{B_2})w$. Since $b$ lies under $1-q$, this contradicts maximality of the $q_i$'s, proving that $P_vq=P_v$. Thus
\begin{equation}\label{neweq1}
P_v=\sum^{\infty}_{n=1}w_n^*(1\otimes e_{B_2})w_n,
\end{equation}
which we also write as $P_v=\sum\limits^{\infty}_{n=1}W_n^*W_n$ where $W_n$ is defined to be $(1\otimes e_{B_2})w_n\in A_1\vnotimes \langle M_2,e_{B_2}\rangle$.

As in Lemma \ref{gplem4.6}, we regard $(A_1,\Tr_1)$ as $L^{\infty}(\Omega)$ for a $\sigma$--finite measure space $(\Omega,\mu)$. We can then identify
$A_1\vnotimes \langle M_2,e_{B_2}\rangle$ with $L^{\infty}(\Omega,\langle M_2,e_{B_2}\rangle)$, and we write elements of this tensor product as uniformly bounded measurable functions on $\Omega$ with values in $\langle M_2,e_{B_2}\rangle$. Then
\begin{equation}\label{neweq2}
\Tr(P_v)=\int \sum^{\infty}_{n=1}\Tr_2(W_n(\omega)^*W_n(\omega))\,d\mu(\omega).
\end{equation}
Since $\Tr(P_v)<\infty$, we may neglect a countable number of null sets to conclude that
\begin{equation}\label{neweq3}
\Tr_2(P_v(\omega))=\Tr_2\left(\sum^{\infty}_{n=1}W_n(\omega)^*W_n(\omega)\right) <\infty,\ \ \ \omega\in \Omega,
\end{equation}
from which it follows that $P_v(\omega)\leq Q_2$ for $\omega\in \Omega$. Thus $P_v\leq 1\otimes Q_2$ which gives $P_v\leq S_1\otimes Q_2$, since the inequality $P_v\leq S_1\otimes S_2$ has already been established.
\end{proof}

We are now in a position to approximate an intertwiner in a tensor product.
\begin{thm}\label{gpthm4.8}
 Let $B_i\subseteq M_i$, $i=1,2$, be inclusions of finite von Neumann algebras satisfying $B'_i \cap M_i \subseteq B_i$. Given $v\in\oneGN_{M_1\vnotimes M_2}(B_1\vnotimes B_2)$ and $\vp>0$, there exist $x_1,\dots,x_k\in B_1\vnotimes B_2$, $w_{1,1},\dots,w_{1,k}\in\oneGN_{M_1}(B_1)$ and $w_{2,1},\dots,w_{2,k}\in\oneGN_{M_2}(B_2)$ such that:
 \begin{enumerate}
 \item $\|x_j\|\leq 1$ for each $j$;
\item\begin{equation}
\left\|v-\sum_{j=1}^kx_j(w_{1,j}\otimes w_{2,j})\right\|_{2,\tau}<\vp.
\end{equation}
\end{enumerate}
\end{thm}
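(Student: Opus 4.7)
The plan is to construct the approximation upstairs in $\langle M,e_B\rangle$ and then pull it back to $M$ via the pull-down map. Associated to $v$ is the projection $P_v\in Z(B'\cap\langle M,e_B\rangle)$ of Definition \ref{DefPv}, which satisfies $P_vv^*v=v^*e_Bv$ by Lemma \ref{gplem4.3} and $P_v\le Q_1\otimes Q_2$ by Lemma \ref{gplem4.7}. Since $\Tr(P_v)\le 1$, the functional $X\mapsto\Tr(P_vX)$ is a normal positive functional on $\langle M,e_B\rangle$. Considering the directed family of finite joins of elements of $\cl P_i$, which has supremum $Q_i$, the net $R_1\otimes R_2$ increases strongly to $Q_1\otimes Q_2$.

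Next I would argue that every such finite join $R_i=R_{i,1}\vee\cdots\vee R_{i,N}$ decomposes as an orthogonal sum of intertwiner projections. Expressing the join as the telescoping orthogonal sum $R_{i,1}+(1-R_{i,1})R_{i,2}+\cdots$, and applying Lemma \ref{gplem4.5}(iii) term by term to each cutdown $(1-R_{i,\le j-1})v_{j,n}^*e_{B_i}v_{j,n}$ (permissible since $1-R_{i,\le j-1}$ lies in $Z(B_i'\cap\langle M_i,e_{B_i}\rangle)$ and commutes with $v_{j,n}^*e_{B_i}v_{j,n}$), each term takes the form $(v_{j,n}f_{j,n})^*e_{B_i}(v_{j,n}f_{j,n})$ with $f_{j,n}\in Z(B_i)$. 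Truncating the countable orthogonal sum yields a finite expression $R_i=\sum_{n=1}^{N_i}w_{i,n}^*e_{B_i}w_{i,n}$ with $w_{i,n}\in\oneGN_{M_i}(B_i)$. By normality of $\Tr(P_v\,\cdot)$, the pairs $R_1,R_2$ and their truncations can be chosen so that $\Tr(P_v(1-R_1\otimes R_2))<\vp^2$.

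Enumerating the pairs as $j=1,\ldots,k:=N_1N_2$, set $\tilde w_j=w_{1,j}\otimes w_{2,j}\in\oneGN_M(B)$ and $x_j={\bb E}_B(v\tilde w_j^*)\in B_1\vnotimes B_2$; clearly $\|x_j\|\le 1$. Since $R_1\otimes R_2=\sum_j\tilde w_j^*e_B\tilde w_j$, the identity $e_Bxe_B={\bb E}_B(x)e_B$ gives
\begin{equation*}
e_Bv(R_1\otimes R_2)=\sum_{j=1}^k{\bb E}_B(v\tilde w_j^*)e_B\tilde w_j=e_B\sum_{j=1}^k x_j\tilde w_j.
\end{equation*}
Using the isometry $\|e_By\|_{2,\Tr}=\|y\|_{2,\tau}$ for $y\in M$ together with $(e_Bv)^*(e_Bv)=v^*e_Bv=P_vv^*v$ and the mutual commutation of $P_v$, $v^*v$ and $R_1\otimes R_2$, a direct expansion reduces the squared distance to
\begin{equation*}
\Big\|v-\sum_{j=1}^k x_j\tilde w_j\Big\|_{2,\tau}^2=\Tr(v^*vP_v(1-R_1\otimes R_2))\le\Tr(P_v(1-R_1\otimes R_2))<\vp^2,
\end{equation*}
completing the argument.

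The main obstacle is the structural step of decomposing finite joins of elements of $\cl P_i$ as orthogonal sums of intertwiner projections, so that the truncation producing the finite sum $R_i=\sum_{n=1}^{N_i}w_{i,n}^*e_{B_i}w_{i,n}$ is legitimate; once this is in hand, the rest reduces to the normality of $\Tr(P_v\,\cdot)$ and the $L^2$-isometry $y\mapsto e_By$.
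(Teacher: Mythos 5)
Your argument is correct, and the overall skeleton (the projection $P_v$ with $P_vv^*v=v^*e_Bv$, the bound $P_v\le Q_1\otimes Q_2$ from Lemma \ref{gplem4.7}, the coefficients $x_j={\bb E}_B(v(w_{1,j}\otimes w_{2,j})^*)$, and the isometry between $e_BM$ and $L^2(M,\tau)$) matches the paper's. Where you genuinely diverge is in the key approximation step. The paper approximates $P_v$ itself in $\|\cdot\|_{2,\Tr}$-norm by finite sums of orthogonal elementary tensors $R_1\otimes R_2$ with $R_i\in\cl P_i$, invoking Lemma \ref{gplem3.7} and the identification of $A_1\vnotimes A_2$ with $L^\infty(\Omega_1\times\Omega_2)$ so that such tensors correspond to measurable rectangles; it then transfers the estimate to $M$ by right-multiplying \eqref{gpeq4.36} by $v^*e_B$. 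You instead exhaust $Q_1\otimes Q_2$ from below by the increasing net of (truncated) finite joins of elements of $\cl P_i$ and control the error by normality of the finite positive functional $\Tr(P_v\,\cdot)$, computing the squared error exactly as $\Tr(v^*vP_v(1-R_1\otimes R_2))$. Your telescoping decomposition of finite joins is legitimate because all the $R_{i,j}$ lie in the abelian algebra $Z(B_i'\cap\langle M_i,e_{B_i}\rangle)$, and Lemma \ref{gplem4.5}(iii) does yield that each telescoped piece is again a countable orthogonal sum of intertwiner projections (this is exactly the paper's observation that every central subprojection of $Q_i$ is again of the form $\sum_k R'_{i,k}$ with $R'_{i,k}\in\cl P_i$); note also that no orthogonality of the domain projections $w_{i,n}^*w_{i,n}$ across different telescoping blocks is needed, only that each $R_i$ is a projection, which your partial sums guarantee. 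What your route buys is that the measure-theoretic apparatus (product measures, measurable rectangles, Lemma \ref{gplem3.7}) is not needed inside this proof, only order-theoretic exhaustion and normality; what the paper's route buys is an explicit $\|\cdot\|_{2,\Tr}$-approximation of $P_v$ by sums of elementary tensor projections, which is the precise form \eqref{gpeq5.23} reused in the proof of Theorem \ref{gpthm5.5} (your intermediate statement would serve there too, after multiplying by $P_v$). The remaining details — $\|x_j\|\le1$ via contractivity of ${\bb E}_B$, the commutation of $P_v$, $v^*v$ and $R_1\otimes R_2$, and the inequality $\Tr(v^*vX)\le\Tr(X)$ for positive $X$ — all check out.
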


\begin{proof}
Write $B\subseteq M$ for the inclusion $B_1\vnotimes B_2 \subseteq M_1\vnotimes M_2$ and fix $v\in \oneGN_M (B)$. Recall from Lemmas \ref{gplem4.3} and \ref{gplem4.7} that there is a projection $P_v\in Z(B'\cap \langle M,e_B\rangle)$ satisfying $P_vv^*v = v^*e_Bv$, and $P_v\le Q_1\otimes Q_2$ where $Q_i$ is the supremum of the set ${\cl P}_i$ of projections in $Z(B'_i\cap \langle M_i,e_{B_i}\rangle)$ specified in Definition \ref{DefQ}.  Thus $Q_i=\sum_kR_{i,k}$ for some countable sum of orthogonal projections $R_{i,k}\in\mathcal P_i$.  By Lemma \ref{gplem4.5} (iii), any subprojection of $R_{i,k}$ in $Z(B_i'\cap \langle M_i,e_{B_i}\rangle)$ is also in ${\cl P}_i$, so it follows that every subprojection of $Q_i$ in $B'\cap\langle M_i,e_{B_i}\rangle$ is also of the form $\sum_kR_{i,k}'$ for some countable sum of orthogonal projections $R_{i,k}'\in\mathcal P_i$.

The restriction $\Phi_i$ of $\Tr_i$ to $Z(B'_i\cap \langle M_i,e_{B_i}\rangle)Q_i$ is a normal semifinite weight on this abelian von Neumann algebra $A_i$, and $(A_i,\Tr_i)$ can be identified with $L^\infty(\Omega_i)$ for a $\sigma$-finite measure space $(\Omega_i,\Sigma_i,\mu_i)$. Since $P_v\le Q_1\otimes Q_2$, this operator can be viewed as an element of $L^\infty(\Omega_1\times \Omega_2, \mu_1\times\mu_2)$, and it also lies in the corresponding $L^2$-space since $\Tr(P_v)\le 1$. By Lemma \ref{gplem3.7} and the previous paragraph, $P_v$ can be approximated in $\|\cdot\|_{2,\Tr}$-norm by finite sums of orthogonal projections of the form $R_1\otimes R_2$, each lying in ${\cl P}_i$. These elementary tensors correspond to measurable rectangles in $\Omega_1\times \Omega_2$. By the definition of $\mathcal P_i$, each $R_i$ is close in $\|\cdot\|_{2,\Tr_i}$-norm to a finite sum $\sum\limits^k_{j=1} w^*_{i,j}e_{B_i}w_{i,j}$, with $w_{i,j}\in\oneGN_{M_i}(B_i)$.This allows us to make the following approximation:\ given $\vp>0$, there exist finite sets $\{w_{i,j}\}^k_{j=1} \in \oneGN_{M_i} (B_i)$, $i=1,2$, such that
\begin{equation}\label{gpeq4.36}
 \left\|P_v - \sum^k_{j=1} (w^*_{1,j}e_{B_1}w_{1,j})\otimes (w^*_{2,j}e_{B_2}w_{2,j})\right\|_{2,\Tr} < \vp.
\end{equation}

If we multiply on the right in \eqref{gpeq4.36} by $v^*e_B = v^*e_B vv^*e_B$, then the result is
\begin{equation}\label{gpeq4.37}
 \left\|v^*e_B - \left(\sum^k_{j=1} (w^*_{1,j}e_{B_1}w_{1,j}) \otimes (w^*_{2,j} e_{B_2}w_{2,j})\right) (v^*e_B)\right\|_{2,\Tr} < \vp,
\end{equation}
using the fact that $P_vv^*v = v^*e_Bv$. A typical element of the sum in \eqref{gpeq4.37} is
\[
 (w^*_{1j} \otimes w^*_{2,j}) (e_{B_1}\otimes e_{B_2}) (w_{1,j}\otimes w_{2,j})v^*(e_{B_1} \otimes e_{B_2})
\]
which has the form $(w^*_{1,j}\otimes w^*_{2,j})x_j^*e_B$, where $x_j^*=\mathbb E_B((w_{1,j}\otimes w_{2,j})v^*)\in B$ has $\|x_j^*\|\leq\|(w_{1,j}\otimes w_{2,j})v^*\|\leq 1$. Thus \eqref{gpeq4.37} becomes
\begin{equation}\label{gpeq4.38}
 \left\|(v^*-\sum_{j=1}^k(w_{1,j}\otimes w_{2,j})^*x_j^*)e_B\right\|_{2,\Tr} <\vp.
\end{equation}
For each $y\in M$,
\begin{equation}\label{gpeq4.39}
 \|ye_B\|^2_{2,Tr} = \Tr(e_By^*ye_B) =\tau(y^*y) = \|y\|^2_{2,\tau},
\end{equation}
and so \eqref{gpeq4.38} implies that 
$$
\left\|v-\sum_{j=1}^kx_j(w_{1,j}\otimes w_{2,j})\right\|_{2,\tau}<\vp,
$$ 
as required.
\end{proof}

\begin{cor}
 Let $B_i\subseteq M_i$, $i=1,2$, be inclusions of finite von Neumann algebras satisfying $B'_i \cap M_i \subseteq B_i$. Then
\begin{equation}\label{gpeq4.35}
 W^*(\oneGN (B_1\vnotimes B_2)) = W^*(\oneGN (B_1)) \vnotimes W^*(\oneGN (B_2)).
\end{equation}
\end{cor}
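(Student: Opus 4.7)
The plan is to deduce the corollary directly from Theorem \ref{gpthm4.8}, with the forward inclusion being routine and the reverse inclusion being a short density argument.

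First I would handle the easy containment from right to left. Since $1\in\oneGN_{M_i}(B_i)$ we have $B_i\subseteq W^*(\oneGN(B_i))$ and hence $B_1\vnotimes B_2\subseteq W^*(\oneGN(B_1))\vnotimes W^*(\oneGN(B_2))$. For intertwiners $w_i\in\oneGN_{M_i}(B_i)$, the elementary tensor $w_1\otimes w_2$ is a partial isometry satisfying
\begin{equation}
(w_1\otimes w_2)(B_1\vnotimes B_2)(w_1\otimes w_2)^*=w_1B_1w_1^*\vnotimes w_2B_2w_2^*\subseteq B_1\vnotimes B_2,
\end{equation}
and $(w_1\otimes w_2)^*(w_1\otimes w_2)=w_1^*w_1\otimes w_2^*w_2\in B_1\vnotimes B_2$, so $w_1\otimes w_2\in\oneGN_{M_1\vnotimes M_2}(B_1\vnotimes B_2)$. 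Thus $W^*(\oneGN(B_1))\vnotimes W^*(\oneGN(B_2))$ is generated by elements of $\oneGN_{M_1\vnotimes M_2}(B_1\vnotimes B_2)$, giving one inclusion.

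For the reverse inclusion, fix $v\in\oneGN_{M_1\vnotimes M_2}(B_1\vnotimes B_2)$ and write $N$ for the von Neumann algebra $W^*(\oneGN(B_1))\vnotimes W^*(\oneGN(B_2))$. Given $\vp>0$, Theorem \ref{gpthm4.8} supplies operators $x_j\in B_1\vnotimes B_2$ and intertwiners $w_{i,j}\in\oneGN_{M_i}(B_i)$ with
\begin{equation}
\left\|v-\sum_{j=1}^kx_j(w_{1,j}\otimes w_{2,j})\right\|_{2,\tau}<\vp.
\end{equation}
Since $B_1\vnotimes B_2\subseteq N$ and each $w_{1,j}\otimes w_{2,j}\in N$ by the first paragraph, the approximating sum lies in $N$. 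Because $N$ is a von Neumann subalgebra of the finite algebra $M_1\vnotimes M_2$ it is closed in the $\|\cdot\|_{2,\tau_1\otimes\tau_2}$-norm, so letting $\vp\to 0$ shows $v\in N$. As $v$ was an arbitrary intertwiner of $B_1\vnotimes B_2$ in $M_1\vnotimes M_2$, we obtain $W^*(\oneGN(B_1\vnotimes B_2))\subseteq N$, completing the proof.

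There is no real obstacle here: all the substantive work was done in Theorem \ref{gpthm4.8} and in the preparatory analysis of the projections $P_v$. The corollary is purely a packaging step, and the one point requiring attention is only the standard observation that a von Neumann subalgebra of a finite von Neumann algebra is closed in the tracial $2$-norm.
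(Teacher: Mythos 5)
Your proof is correct and is exactly the argument the paper intends: the corollary is stated there without proof as an immediate consequence of Theorem \ref{gpthm4.8}, and your packaging (elementary tensors of intertwiners are intertwiners of the tensor product, plus $\|\cdot\|_{2,\tau}$-closedness of a von Neumann subalgebra of a finite algebra) is the standard route. The only cosmetic slip is in the justification of $B_i\subseteq W^*(\oneGN(B_i))$, which holds because every unitary of $B_i$ is an intertwiner, not merely because $1$ is.
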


There are two extreme cases where the hypothesis $B'\cap M \subseteq B$ is satisfied. The first is when $B$ is an irreducible subfactor where the result of Theorem \ref{gpthm4.8} can be deduced from the stronger results of \cite{Saw.NormalisersSubfactors}. The second is when $B$ is a masa in $M$ where Theorem \ref{gpthm4.8} is already known, \cite{Chifan.Normalisers}. The following example explains the preference given to  intertwiners over unitary normalizers in intermediate cases, even in a simple setting.

\begin{exm}\label{gpexm4.9}
Let $M$ be a $\text{II}_1$ factor, let $p\in M$ be projection whose trace lies in $(0,1/2)$, and let $B = pMp + (1-p)M(1-p)$. This subalgebra has no nontrivial unitary normalizers, essentially because $\tau(p) \ne \tau(1-p)$. However, the tensor product $B\vnotimes B\subseteq M\vnotimes M$ does have such normalizers because the compressions by $p\otimes (1-p)$ and $(1-p)\otimes p$, which have equal traces, are conjugate by a unitary normalizer $u$ which is certainly outside $B\vnotimes B$. According to Theorem \ref{gpthm4.8}, $u$ can be obtained as the limit of finite sums of elementary tensors from $W^*(\oneGN (B))\vnotimes W^*(\oneGN (B))$.$\hfill\square$
\end{exm}

\section{Groupoid normalizers of tensor products}\label{gpsec5}

\indent

In this section we return to the groupoid normalizers $\twoGN_M(B)$, namely those $v\in M$ such that $v,v^*\in\oneGN_M(B)$. Our objective in this section is to establish a corresponding version of Theorem \ref{gpthm4.8} for $\twoGN(B)$, and consequently we will assume throughout that any inclusion $B\subseteq M$ satisfies the relative commutant condition $B'\cap M\subseteq B$.  

We will need to draw a sharp distinction between those intertwiners $v$ that are groupoid normalizers and those that are not, and so we introduce the following definition. 
\begin{defn}
Say that $v\in\oneGN (B)$ is {\emph{strictly one-sided}} if the only projection $p\in Z(Bv^*v) = Z(B)v^*v$ for which $vp\in \twoGN(B)$ is $p=0$.  When $B$ is an irreducible subfactor of $M$ then any unitary $u\in M$ satisfying $uBu^* \subsetneqq B$ is a strictly one-sided intertwiner (see \cite[Example 5.4]{Saw.NormalisersSubfactors} for examples of such unitaries). 
\end{defn}
Given $v\in\twoGN(B)$, recall from Section \ref{gpsec3} that there is a projection $P_v\in Z(B'\cap \langle M,e_B\rangle )$ such that $P_vv^*v = v^*e_Bv$, and $P_v$ has the form
\begin{equation}\label{gpeq5.1}
 P_v = \sum_{n\ge 0} v^*_n e_Bv_n,
\end{equation}
where there exist partial isometries $w_n\in B$ so that $v_n =vw^*_n \in \oneGN (B)$, and $w^*_mw_n = 0$ for $m\ne n$. Letting $p_n$ denote the projection $w_nw^*_n\in B$, it also holds that $v^*_nv_n = p_n$. We will employ this notation below.

\begin{lem}\label{gplem5.2}
Let $v\in \twoGN(B)$, and let $u\in \oneGN (B)$ be strictly one-sided. Then $P_vu^*e_Bu = 0$.
\end{lem}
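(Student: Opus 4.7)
The plan is to argue by contradiction: suppose $P_v u^*e_Bu \neq 0$, and produce a nonzero projection $q\in Z(B)u^*u$ with $uq\in\twoGN(B)$, contradicting the strict one-sidedness of $u$.

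First I describe the projection $P_vu^*e_Bu$ in two complementary ways. As a subprojection of $u^*e_Bu$ in the cutdown $(B'\cap\langle M,e_B\rangle)u^*u$, Lemma \ref{gplem3.6} applied to $u$ gives a central $p\in Z(B)$ with $P_vu^*e_Bu = pu^*e_Bu = (up)^*e_B(up)$, so $up\in\oneGN(B)$ is an intertwiner with this basic-construction projection. As a subprojection of $P_v$ in $B'\cap\langle M,e_B\rangle$, Lemma \ref{gplem4.5}(iii) gives an $f\in Z(B)$ with
\[
P_vu^*e_Bu = fP_v = \sum_n fv_n^*e_Bv_n = \sum_n (v_nf)^*e_B(v_nf),
\]
an orthogonal sum over $v_nf\in\twoGN(B)$. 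Because this sum is nonzero, some $(v_nf)^*e_B(v_nf)\neq 0$; fix such an $n$. A further application of Lemma \ref{gplem3.6} to $up$ then yields an $r_n\in Z(B)$ with $(v_nf)^*e_B(v_nf) = r_n(up)^*e_B(up) = (upr_n)^*e_B(upr_n)$.

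The one nontrivial ingredient is the following sub-lemma, which I expect to prove directly: if $s,t\in\oneGN(B)$ satisfy $s^*e_Bs=t^*e_Bt$, then $st^*\in B$. The proof is to multiply the hypothesis on the left by $s$ and on the right by $t^*$, commute $ss^*,tt^*\in B$ past $e_B$, and simplify via the polar identities $ss^*s=s$ and $t^*tt^*=t^*$; what remains is $e_B(st^*)=(st^*)e_B$, whence $st^*\in M\cap\{e_B\}'=B$ by Proposition \ref{ListOfFacts}(iii). I expect this to be the main obstacle; everything else is algebraic bookkeeping using the Section \ref{gpsec3} lemmas.

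Applying the sub-lemma to $s=v_nf$ and $t=upr_n$ produces $\sigma:=(v_nf)(upr_n)^*\in B$. Since these two intertwiners have the common source projection $fp_n = r_npu^*u$ (the pull-down of their shared basic-construction projection), a direct computation yields $\sigma^*(v_nf)=upr_n$, so that for every $b\in B$,
\[
(upr_n)^*\,b\,(upr_n) = (v_nf)^*(\sigma b\sigma^*)(v_nf)\in(v_nf)^*B(v_nf)\subseteq B,
\]
using $\sigma\in B$ and $v_nf\in\twoGN(B)$. Combined with the easy verifications that $(upr_n)(upr_n)^*,(upr_n)^*(upr_n)\in B$ and $(upr_n)B(upr_n)^*\subseteq B$, this shows $upr_n\in\twoGN(B)$. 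Writing $upr_n=u\cdot(pr_nu^*u)$ and setting $q:=pr_nu^*u$, we obtain a nonzero projection $q\in Z(B)u^*u$ with $uq=upr_n\in\twoGN(B)$, contradicting the strict one-sidedness of $u$.
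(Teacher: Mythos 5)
Your sub-lemma (if $s,t\in\oneGN(B)$ satisfy $s^*e_Bs=t^*e_Bt$ then $st^*\in B$, hence $t=\sigma^*s$ with $\sigma=st^*$) is correct and is in the same spirit as what the paper actually does. The problem is the route by which you manufacture the pairs $(s,t)$. Your decomposition $P_vu^*e_Bu=fP_v=\sum_n(v_nf)^*e_B(v_nf)$ with $f\in Z(B)$ invokes the ``moreover'' clause of Lemma \ref{gplem4.5}(iii), whose hypothesis is that the subprojection $Q=P_vu^*e_Bu$ lies in $B'\cap\langle M,e_B\rangle$. You never verify this, and it is false in general: writing $Q=zu^*e_Bu=(uz)^*e_B(uz)$, Lemma \ref{gplem3.1}(2) shows $Q\in B'\cap\langle M,e_B\rangle$ if and only if $zu^*u\in Z(B)$, and an intertwiner's domain projection $u^*u$ need not be central. (Concretely, for $B=\mathbb M_2\oplus\mathbb M_2\subseteq \mathbb M_4$, $v=1$ and $u$ a rank-one projection in the first block, $Q=ue_B$ does not commute with $B$.) Since your argument uses strict one-sidedness of $u$ only at the very last line, every intermediate step must hold for an arbitrary intertwiner $u$, and this one does not; with only the first part of Lemma \ref{gplem4.5}(iii) you get $f\in N(P_v)$, and then $v_nf$ need not be a partial isometry, let alone a groupoid normalizer. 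The subsequent application of Lemma \ref{gplem3.6} to $up$ has the same defect: to place $(v_nf)^*e_B(v_nf)$ inside the cutdown $(B'\cap\langle M,e_B\rangle)pu^*u$ you would need $fp_n\in Z(B)$, and the projections $p_n=v_n^*v_n$ from Definition \ref{DefPv} are not central.

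The paper's proof circumvents exactly this obstruction. It takes $Q=zu^*e_Bu=fP_v$ with $f$ only in $M\cap\{P_v\}'$, and instead of matching summands it extracts a single global identity: multiplying $\sum_nfw_nv^*e_Bvw_n^*=zu^*e_Bu$ on the left by $e_Bu$ and on the right by $p_j$ yields $b_jvw_j^*=uzp_j$ with $b_j=\mathbb E_B(ufw_jv^*)\in B$, whence $uz=\sum_nb_nvw_n^*$ strongly. Then $zu^*buz=\lim_k\sum_{m,n\le k}w_nv^*b_n^*bb_mvw_m^*\in B$ because $v\in\twoGN(B)$, so $zu^*u$ violates strict one-sidedness unless it is zero. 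Note that this computation is precisely your sub-lemma idea applied ``in aggregate'' ($b_j$ playing the role of your $\sigma^*$), but it never needs $f$ to be central or the individual projections $v_n^*e_Bv_n$ to lie in $B'\cap\langle M,e_B\rangle$. To repair your write-up you would need either to justify $Q\in B'\cap\langle M,e_B\rangle$ (which I do not believe can be done before knowing $Q=0$) or to switch to this global version of the argument.
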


\begin{proof}
By Lemmas \ref{gplem3.2} (i) and \ref{gplem4.3}, $u^*e_Bu\in Z(B'\cap \langle M,e_B\rangle)u^*u$ and $P_v \in Z(B'\cap \langle M,e_B\rangle)$, showing that $P_v$ and $u^*e_Bu$ are commuting projections. Let $Q$ denote the projection $P_vu^*e_Bu$ in $Z(B'\cap\langle M,e_B\rangle)u^*u$ which lies below both $P_v$ and $u^*e_Bu$. From Lemmas \ref{gplem3.6} and \ref{gplem4.5} we may find projections $z\in Z(B)$ and $f\in M\cap \{P_v\}'$ such that 
\begin{equation}\label{gpeq5.8}
 Q = zu^*e_Bu = fP_v.
\end{equation}
From \eqref{gpeq5.1}, write $P_v$ as the strongly convergent sum
\begin{equation}\label{gpeq5.9}
 P_v = \sum_{n\ge 0} w_nv^*e_Bvw^*_n,
\end{equation}
so that \eqref{gpeq5.8} becomes
\begin{equation}\label{gpeq5.10}
 \sum_{n\ge 0} fw_nv^*e_Bvw^*_n = zu^*e_Bu.
\end{equation}
If we multiply \eqref{gpeq5.10} on the right by $p_j=w_jw_j^*$ and on the left by $e_Bu$, noting that $uzu^*\in B$, then the result is 
\begin{align}
e_Bb_jvw^*_j = e_Buzu^*up_j = e_Buu^*uzp_j
\label{gpeq5.11}
= e_Buzp_j
\end{align}
for each $j\ge 0$, where $b_j = {\bb E}_B(ufw_jv^*)\in B$. Thus
\begin{equation}\label{gpeq5.12}
b_jvw^*_j = uzp_j,\qquad j\ge 0.
\end{equation}
If we sum \eqref{gpeq5.12} over $j\ge 0$, then the right-hand side will converge strongly, implying strong convergence of $\sum\limits_{j\ge 0} b_jvw^*_j$. If we return to \eqref{gpeq5.10} and multiply on the left by $e_Bu$, then we obtain
\begin{equation}\label{gpeq5.13}
uz = \sum_{n\ge 0} b_nvw^*_n
\end{equation}
with strong convergence of this sum. It follows that, for each $b\in B$,
\begin{equation}\label{gpeq5.14}
 zu^*buz = \lim_{k\to\infty} \sum_{m,n\le k} w_nv^*b^*_nbb_mvw^*_m
\end{equation}
strongly, and thus $zu^*buz\in B$ since $v^*b^*_nbb_mv\in B$. Thus the projection $p = zu^*u\in Z(Bu^*u)$ satisfies $pu^*Bup \subseteq B$. Since $u$ is strictly one-sided, we conclude that $zu^*u=0$, showing that
\begin{equation}\label{gpeq5.15}
 Q = zu^*e_Bu = 0
\end{equation}
from \eqref{gpeq5.8}. This proves the result.
\end{proof}  

We can use the preceding lemma to show that a strictly one-sided intertwiner $v\in\oneGN(B)$ has the property that the only projection $p\in Bv^*v$ for which $vp\in\twoGN(B)$ is $p=0$. Indeed, take such a projection $p$ for which $w=vp\in\twoGN(B)$.  Then $P_vw^*w=w^*e_Bw$ so that $P_v\geq P_w$ by Remark \ref{gprem3.7}. Lemma \ref{gplem5.2} then gives $P_wv^*e_Bv=0$. Thus $P_ww^*e_Bw=w^*e_Bw=0$ and so $w=0$.

\begin{lem}\label{gplem5.3}
 Let $v\in \oneGN (B)$. Then there exist orthogonal families of orthogonal projections $e_n,f_n\in v^*vBv^*v$ such that $\sum(e_n+f_n) = v^*v$, each $ve_n$ is strictly one-sided, and each $vf_n$ lies in $\twoGN(B)$.
\end{lem}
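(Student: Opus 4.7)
The plan is to apply Zorn's lemma to extract a maximal orthogonal family $\{f_n\}$ of nonzero projections in $v^*vBv^*v$ with $vf_n\in\twoGN(B)$, and then take the complement $e=v^*v-\sum_nf_n$ as a single strictly one-sided piece, padding with $e_m=0$ for $m\geq 2$ if an indexed family is required.

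First I would record the elementary observation that for any projection $f\in v^*vBv^*v$, the element $vf$ is automatically a partial isometry in $\oneGN(B)$: indeed $(vf)^*(vf)=fv^*vf=f$ since $f\leq v^*v$, and $(vf)(vf)^*=vfv^*\in B$ since $f\in B$ and $vBv^*\subseteq B$. Thus the only extra condition for $vf\in\twoGN(B)$ is $fv^*Bvf\subseteq B$. Writing $\mathcal F$ for the collection of nonzero projections in $v^*vBv^*v$ with this property, I would apply Zorn's lemma to orthogonal families of elements of $\mathcal F$ (separability of the predual forcing any maximal such family to be countable) to obtain a maximal family $\{f_n\}$, and set $f=\sum_nf_n\in v^*vBv^*v$ and $e=v^*v-f\in v^*vBv^*v$. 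The hypothesis $vf_n\in\twoGN(B)$ is built in.

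The key step is to show that $ve$ is strictly one-sided. Suppose otherwise, so that some nonzero projection $p\in Z(B)e$ satisfies $vp\in\twoGN(B)$. Since $p\leq e\leq v^*v$ and $p\in Z(B)\subseteq B$, we have $p=v^*vpv^*v\in v^*vBv^*v$, so $p$ lies in $\mathcal F$; moreover $p\leq e$ is orthogonal to every $f_n\leq f$. Hence $\{f_n\}\cup\{p\}$ is an orthogonal family in $\mathcal F$ properly enlarging $\{f_n\}$, contradicting maximality. The only mild subtlety is that the definition of strictly one-sided quantifies only over $Z(B)v^*v$, while my maximality is taken over the larger collection of projections in $v^*vBv^*v$; but since $Z(B)e\subseteq v^*vBv^*v$, maximality in the larger class automatically forces the weaker property needed. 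Setting $e_1=e$ and $e_n=0$ for $n\geq 2$ then gives the required decomposition $v^*v=\sum_n(e_n+f_n)$ with all projections pairwise orthogonal in $v^*vBv^*v$.
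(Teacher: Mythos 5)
Your proof is correct. The engine is the same as the paper's --- maximality of an orthogonal family of projections $f$ with $vf\in\twoGN(B)$, played against the fact that failure of strict one-sidedness produces exactly such a nonzero projection --- but you organize it more economically. The paper runs two successive maximality arguments: it first extracts a maximal family $\{e_n\}$ in $Z(B)v^*v$ with each $ve_n$ strictly one-sided, then a maximal family $\{f_n\}$ in $Z(B)(v^*v-\sum e_n)$ of two-sided pieces, and kills the leftover projection $g$ by observing that $vg$ can neither be strictly one-sided nor admit a nonzero central cut-down in $\twoGN(B)$. You run a single maximality argument for the two-sided pieces and note that the complement $e=v^*v-\sum_n f_n$ is then automatically strictly one-sided in one lump; your handling of the ``mild subtlety'' is right, since strict one-sidedness of $ve$ quantifies only over $Z(B)e$, which does sit inside $v^*vBv^*v$. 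The only difference in the output is that your $f_n$ live in $v^*vBv^*v$ rather than in $Z(B)v^*v$; this is all the lemma asserts, and the application in Theorem \ref{gpthm5.5} needs only that the $e_n,f_n$ are orthogonal projections in $v^*vBv^*v$ (so that the cross terms in $v^*e_Bv$ vanish by the commutation in Lemma \ref{gplem3.1}), so nothing is lost.
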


\begin{proof}
Let $\{e_n\}$ be a maximal orthogonal family of projections in $Z(B)v^*v$ such that $ve_n$ is strictly one-sided, and set $e=\sum e_n$. Then choose a maximal family of orthogonal projections $\{f_n\}\in Z(B)(v^*v-e)$ such that $vf_n\in \twoGN(B)$. If $\sum e_n + \sum f_n = v^*v$ then the result is proved, so consider the projection $g = v^*v - \sum e_n-\sum f_n \in Z(B)v^*v$ and suppose that $g\ne 0$. Then $vg$ cannot be strictly one-sided otherwise the maximality of $\{e_n\}$ would be contradicted. Thus there exists $z\in Z(B)$ such that $vgz$ is a nonzero element of $\twoGN(B)$. But this contradicts maximality of $\{f_n\}$, proving the result.
\end{proof}

We now return to considering two containments $B_i\subseteq M_i$ satisfying $B'_i\cap M_i\subseteq B_i$, and the tensor product containment $B = B_1\vnotimes B_2 \subseteq M_1\vnotimes M_2 = M$. The next lemma is the key step required to obtain a version of Theorem \ref{gpthm4.8} for groupoid normalizers. We need a result from the perturbation theory of finite von Neumann algebras. For any containment $A\subseteq N$, where $N$ has a specified trace $\tau$, recall that $N\subset_{\delta,\tau} A$ means that 
\[
 \sup \{\|x-{\bb E}_A(x)\|_2\colon \ x\in N, \ \|x\|\le 1\} \le \delta
\]
where $\|\cdot\|_2$ is defined using the given trace $\tau$. If $\tau$ is scaled by a constant $\lambda$, then $\subset_{\delta,\lambda\tau}$ is the same as $\subset_{\delta/\sqrt\lambda,\tau}$. Then \cite[Theorem 3.5]{Sinclair.PertSubalg} (see also \cite[Theorem 10.3.5]{Sinclair.MasaBook}), stated for normalized traces, has the following general interpretation:\ if $A\subseteq N$ and $N\subset_{\delta,\tau}A$ for some $\delta < (\tau(1)/23)^{1/2}$, then there exists a nonzero projection $p\in Z(A'\cap N)$ such that $Ap = pNp$.

\begin{lem}\label{gplem5.4}
Let $v\in \oneGN (B_1)$ and $w\in \oneGN (B_2)$. If $v$ (or $w$) is strictly one-sided then $v\otimes w\in \oneGN (B)$ is strictly one-sided.
\end{lem}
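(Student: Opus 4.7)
My plan is to reduce the tensor-product relation to a pointwise relation on slices of $p$ using commutative disintegration along one factor. Assume without loss of generality that $v$ is strictly one-sided. I will first note that $v\otimes w\in\oneGN(B)$, since $(v\otimes w)(b_1\otimes b_2)(v\otimes w)^*\in B$ for $b_1\otimes b_2\in B$ and $(v\otimes w)^*(v\otimes w)=v^*v\otimes w^*w\in B$. I will then argue by contradiction: take a nonzero projection $p\in Z(B)(v^*v\otimes w^*w)=Z(B_1)v^*v\vnotimes Z(B_2)w^*w$ with $(v\otimes w)p\in\twoGN(B)$, and derive a contradiction with the strict one-sidedness of $v$.

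Specializing the condition $((v\otimes w)p)^*(b_1\otimes w^*w)(v\otimes w)p\in B$ to $b_1\in B_1$ and using $p(1\otimes w^*w)=p$ gives $p(v^*b_1v\otimes 1)p\in B_1\vnotimes B_2$ for every $b_1\in B_1$. Since $p\in B$ and $\mathbb{E}_B=\mathbb{E}_{B_1}\otimes\mathbb{E}_{B_2}$ is $B$-bimodular, subtracting $\mathbb{E}_B$ leaves $p(X_{b_1}\otimes 1)p=0$, where $X_{b_1}=v^*b_1v-\mathbb{E}_{B_1}(v^*b_1v)$. To exploit this I will set $A_2=Z(B_2)w^*w$ and use the identification $M_1\vnotimes A_2\cong L^\infty(X_2,M_1)$ coming from $A_2\cong L^\infty(X_2,\mu_2)$. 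Under this identification $p$ becomes a measurable field $(p^{x_2})$ of projections in $Z(B_1)v^*v$, while $X_{b_1}\otimes 1$ is the constant field $x_2\mapsto X_{b_1}$; the operator equation $p(X_{b_1}\otimes 1)p=0$ thus becomes the pointwise equation $p^{x_2}X_{b_1}p^{x_2}=0$ for $\mu_2$-a.e.\ $x_2$.

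Separability of $B_1$ together with the $\|\cdot\|_2$-continuity of $b_1\mapsto X_{b_1}$ then lets me take a single null set $N\subseteq X_2$ outside of which this holds simultaneously for every $b_1\in B_1$. Rearranging, $p^{x_2}v^*b_1vp^{x_2}=p^{x_2}\mathbb{E}_{B_1}(v^*b_1v)p^{x_2}\in B_1$ for every $b_1\in B_1$, and combining this with $(vp^{x_2})B_1(vp^{x_2})^*\subseteq vB_1v^*\subseteq B_1$ (together with the fact that $p^{x_2}\le v^*v$, which makes $vp^{x_2}$ a partial isometry) gives $vp^{x_2}\in\twoGN(B_1)$ whenever $x_2\notin N$. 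Since $p^{x_2}\in Z(B_1)v^*v=Z(B_1v^*v)$, the strict one-sidedness of $v$ forces each such $p^{x_2}$ to vanish, and hence $p=0$, contrary to assumption.

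The step I expect to be the main obstacle is extracting usable information from $p(v^*b_1v\otimes 1)p\in B$ when $p$ is a general projection in the abelian tensor product $Z(B_1)v^*v\vnotimes Z(B_2)w^*w$: such projections need not admit any nonzero elementary tensor subprojection $e\otimes f\le p$, so one cannot simply produce $e\in Z(B_1)v^*v$ with $ev^*B_1ve\subseteq B_1$ directly. The disintegration $M_1\vnotimes A_2\cong L^\infty(X_2,M_1)$ circumvents this by converting the single operator identity into a measurable family of one-factor identities, one for each $x_2$, and each of them is exactly of the form ruled out by strict one-sidedness of $v$.
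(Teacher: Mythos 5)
Your proof is correct, but it takes a genuinely different route from the paper's. You disintegrate over the abelian algebra $Z(B_2)w^*w\cong L^\infty(X_2,\mu_2)$, convert the single operator identity $p\bigl((v^*b_1v-\mathbb{E}_{B_1}(v^*b_1v))\otimes 1\bigr)p=0$ into the a.e.\ pointwise identities $p^{x_2}v^*b_1vp^{x_2}=p^{x_2}\mathbb{E}_{B_1}(v^*b_1v)p^{x_2}\in B_1$, use separability of the predual of $B_1$ and $\|\cdot\|_2$-continuity to obtain one null set valid for all $b_1$ simultaneously, and then apply the definition of strict one-sidedness slice by slice to $p^{x_2}\in Z(B_1)v^*v$. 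The paper instead argues quantitatively: it approximates $p$ in $\|\cdot\|_{2,\tau}$ by a finite sum $\sum_i p_i\otimes q_i$ of elementary tensors of projections with the $p_i$ orthogonal, and for each $i$ invokes the perturbation theorem \cite[Theorem 3.5]{Sinclair.PertSubalg} to produce $b_i\in B_1$ with $\|p_iv^*b_ivp_i\|\le 1$ yet $d_{2,\tau_1}(p_iv^*b_ivp_i,\,p_iB_1p_i)\ge(\tau_1(p_i)/23)^{1/2}$ --- were no such $b_i$ available, that theorem would manufacture a nonzero $p_i'\in Z(B_1)p_i$ with $vp_i'\in\twoGN(B_1)$, contradicting strict one-sidedness of $v$. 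Assembling $b=\sum_i vp_iv^*b_ivp_iv^*\otimes wq_iw^*$ then yields a single explicit element with $p(v\otimes w)^*b(v\otimes w)p\notin B$. Your route trades that external perturbation input for direct-integral machinery and the standing separability hypothesis; the paper's route produces a concrete witness and avoids a.e.\ bookkeeping. If you write yours up, the points to make explicit are the identifications $Z(B)(v^*v\otimes w^*w)=Z(B_1)v^*v\,\vnotimes\,Z(B_2)w^*w$ and $M_1\vnotimes L^\infty(X_2)\cong L^\infty(X_2,M_1)$ (valid for separable preduals, with a projection in the subalgebra $L^\infty(X_2,Z(B_1)v^*v)$ being a.e.\ a projection in $Z(B_1)v^*v$ dominated by $v^*v$), and the uniform-norm-bounded $\|\cdot\|_2$-continuity argument that passes from a countable dense subset of the unit ball of $B_1$ to all of $B_1$.
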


\begin{proof}
Without loss of generality, suppose that $v$ is strictly one-sided. Fix a nonzero projection $p\in Z(B)(v^*v\otimes w^*w)$, and let $\tau_i$ be the faithful normalized normal trace on $M_i\supset B_i$. Given $\vp>0$, we may choose projections $p_i\in Z(B_1)v^*v$ and $q_i\in Z(B_2)w^*w$, $1\le i\le k$, with $\|p - \sum\limits^k_{i=1} p_i\otimes q_i\|_{2,\tau} < \vp$ and the $p_i$'s orthogonal since these projections lie in abelian von Neumann algebras. Here, $\|\cdot\|_{2,\tau}$ is with respect to the normalized trace $\tau = \tau_1\otimes \tau_2$ on $M$. Now
\begin{equation}\label{gpeq5.16}
 p_iB_1p_i = p_iv^*vB_1v^*vp_i \subseteq p_iv^*B_1vp_i
\end{equation}
since $vB_1v^*\subseteq B_1$. If it were true that $p_iv^*B_1vp_i \subset_{\delta,\tau_1}p_iB_1p_i$ for some $\delta < (\tau(p_i)/23)^{1/2}$, then it would follow from \cite[Theorem 3.5]{Sinclair.PertSubalg} that there exists a nonzero projection $p'_i\in (p_iB_1p_i)'\cap p_iv^*B_1vp_i \subseteq Z(B_1)p_i$ such that $p'_ip_iB_1p_i = p'_ip_iv^*B_1vp_ip'_i$, contradicting the hypothesis that $v$ is strictly one-sided. Thus there exists $b_i\in B_1$ satisfying
\begin{equation}\label{gpeq5.17}
 \|p_iv^*b_ivp_i\|\le 1,\quad d_{2,\tau_1}(p_iv^*b_ivp_i, p_iB_1p_i) \ge (\tau_1(p_i)/23)^{1/2},
\end{equation}
where $d_{2,\tau_1}(x,A) = \inf\{\|x-a\|_{2,\tau_1}\colon \ a\in A\}$ for any von~Neumann algebra $A$.

Since each $p_i$ lies under $v^*v$, the projections $vp_iv^*$ lie in $B_1$ and are orthogonal. We may then define an element $b\in B$ by
\begin{equation}\label{gpeq5.18}
 b = \sum_i vp_iv^*b_ivp_iv^* \otimes wq_iw^*,
\end{equation}
and the orthogonality gives $\|b\|\le 1$. Moreover,
\begin{equation}\label{gpeq5.19}
(v\otimes w)^* b(v\otimes w) = \sum_i p_iv^*b_ivp_i\otimes q_i,
\end{equation}
and
\begin{align}
d_{2,\tau}\left((v\otimes w)^* b(v\otimes w), B\right)^2 &\ge \left(\sum_i \tau_1(p_i)\tau_2(q_i)\right)/23\notag\\
\label{gpeq5.20}
&> (\tau(p)-\vp)/23.
\end{align}
The right-hand side of \eqref{gpeq5.19} is unchanged by pre- and post-multiplication by the projection $\sum\limits_i p_i\otimes q_i$, and this is close to $p$. This leads to the estimate
\begin{equation}\label{gpeq5.21}
 \|p(v\otimes w)^* b(v\otimes w)p - (v\otimes w)^* b(v\otimes w)\|_{2,\tau} \le 2\left\|p - \sum_i p_i\otimes q_i\right\|_{2,\tau} < 2\vp.
\end{equation}
From \eqref{gpeq5.20} and \eqref{gpeq5.21},
\begin{equation}\label{gpeq5.22}
 d_{2,\tau}(p(v\otimes w)^* b(v\otimes w)p, B) > ((\tau(p)-\vp)/23)^{1/2} -2\vp.
\end{equation}
A sufficiently small choice of $\vp$ then shows that $p(v\otimes w)^* b(v\otimes w)p\notin B$, and thus $v\otimes w$ is strictly one-sided.
\end{proof}

We can now give the two-sided counterpart of Theorem \ref{gpthm4.8}.

\begin{thm}\label{gpthm5.5}
 Let $B_i\subseteq M_i$, $i=1,2$, be inclusions of finite von Neumann algebras satisfying $B'_i \cap M_i \subseteq B_i$. Given $v\in\twoGN_{M_1\vnotimes M_2}(B_1\vnotimes B_2)$ and $\vp>0$, there exist $x_1,\dots,x_k\in B_1\vnotimes B_2$, $w_{1,1},\dots,w_{1,k}\in\twoGN_{M_1}(B_1)$ and $w_{2,1},\dots,w_{2,k}\in\twoGN_{M_2}(B_2)$ such that:
 \begin{enumerate}
 \item $\|x_j\|\leq 1$ for each $j$;
\item\begin{equation}
\left\|v-\sum_{j=1}^kx_j(w_{1,j}\otimes w_{2,j})\right\|_{2,\tau}<\vp.
\end{equation}
\end{enumerate}
\end{thm}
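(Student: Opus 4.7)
The plan is to bootstrap from Theorem \ref{gpthm4.8}, applied to $v \in \twoGN(B) \subseteq \oneGN(B)$, and then refine the resulting intertwiner approximation by decomposing each $w_{i,j}$ into a strictly one-sided part and a groupoid normalizer part via Lemma \ref{gplem5.3}, finally arguing that the strictly one-sided contributions disappear upon right-multiplication by $v^*e_B$. The engine driving the elimination is the interaction between Lemma \ref{gplem5.2} (which says $P_v\,u^*e_Bu = 0$ whenever $u$ is strictly one-sided) and Lemma \ref{gplem5.4} (which propagates the strict one-sidedness through tensor products).

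More concretely, I would start from the step in the proof of Theorem \ref{gpthm4.8} giving intertwiners $w_{i,j}\in\oneGN_{M_i}(B_i)$ and the projection approximation
$$
\left\|P_v - \sum_{j=1}^k Z_j\right\|_{2,\Tr} < \vp, \qquad Z_j = (w_{1,j}\otimes w_{2,j})^* e_B (w_{1,j}\otimes w_{2,j}).
$$
Using Lemma \ref{gplem5.3}, write $w_{i,j}^*w_{i,j}= \sum_n e^{(i,j)}_n + \sum_n f^{(i,j)}_n$ with $w_{i,j}e^{(i,j)}_n$ strictly one-sided and $w_{i,j}f^{(i,j)}_n\in\twoGN(B_i)$. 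Since each $e^{(i,j)}_n,f^{(i,j)}_n\in Z(B_i)w_{i,j}^*w_{i,j}$ commutes with $w_{i,j}^*e_{B_i}w_{i,j}$, the projection $w_{i,j}^*e_{B_i}w_{i,j}$ splits orthogonally as $S_{i,j}+G_{i,j}$, where $S_{i,j} = \sum_n (w_{i,j}e^{(i,j)}_n)^*e_{B_i}(w_{i,j}e^{(i,j)}_n)$ and $G_{i,j}$ is the analogous sum over the $f^{(i,j)}_n$. Expanding gives
$$
Z_j = (G_{1,j}\otimes G_{2,j}) + Y_j,
$$
where $Y_j$ is the sum of the three remaining tensor products, each of which is a sum of projections $(\tilde u\otimes\tilde v)^*e_B(\tilde u\otimes\tilde v)$ with at least one of $\tilde u,\tilde v$ strictly one-sided.

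The crux is now to kill $Y_j$. Lemma \ref{gplem5.4} tells us that each such $\tilde u\otimes\tilde v$ is itself a strictly one-sided intertwiner of $B$, whence by Lemma \ref{gplem5.2}, $P_v(\tilde u\otimes\tilde v)^*e_B(\tilde u\otimes\tilde v) = 0$, and summing gives $P_vY_j = 0$. Since $P_v \in Z(B'\cap\langle M,e_B\rangle)$ commutes with $Y_j$, we also have $Y_jP_v=0$. A direct computation using $vv^*\in B$ gives $P_vv^*e_B = v^*e_Bvv^*e_B = v^*\mathbb{E}_B(vv^*)e_B = v^*e_B$, so right-multiplying the projection approximation of $P_v$ by $v^*e_B$ yields
$$
\left\|v^*e_B - \sum_{j=1}^k(G_{1,j}\otimes G_{2,j})v^*e_B\right\|_{2,\Tr} < \vp,
$$
since $Y_jv^*e_B = Y_jP_vv^*e_B = 0$.

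It remains to truncate each $G_{1,j}\otimes G_{2,j}$ to a finite subsum (controlling the error in $\|\cdot\|_{2,\Tr}$ via $\Tr(G_{i,j})\leq 1$), so that we express the right-hand side as a finite sum of terms of the form $(\tilde w_{1,k}\otimes\tilde w_{2,k})^*e_B(\tilde w_{1,k}\otimes\tilde w_{2,k})v^*e_B$ with $\tilde w_{i,k}\in\twoGN(B_i)$, and then apply the concluding manipulation from the proof of Theorem \ref{gpthm4.8}: each such term equals $(\tilde w_{1,k}\otimes\tilde w_{2,k})^* x_k^* e_B$ with $x_k^* = \mathbb{E}_B((\tilde w_{1,k}\otimes\tilde w_{2,k})v^*)\in B$ of norm $\leq 1$. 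Passing from $\|\cdot\|_{2,\Tr}$ back to $\|\cdot\|_{2,\tau}$ via the isometry $y\mapsto ye_B$ produces the required approximation. The main obstacle is verifying the elimination step — ensuring that strictly one-sided pieces genuinely vanish after multiplication by $v^*e_B$ — which reduces to exactly the conjunction of Lemmas \ref{gplem5.2} and \ref{gplem5.4} together with the centrality of $P_v$ provided by Lemma \ref{gplem4.3}.
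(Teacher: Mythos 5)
Your proposal is correct and follows essentially the same route as the paper: both start from the projection approximation \eqref{gpeq4.36} in the proof of Theorem \ref{gpthm4.8}, split each intertwiner via Lemma \ref{gplem5.3} into strictly one-sided and two-sided pieces, and use Lemmas \ref{gplem5.4} and \ref{gplem5.2} to annihilate the strictly one-sided contributions against $P_v$ before finishing as in Theorem \ref{gpthm4.8}. The only cosmetic difference is that the paper multiplies the approximation by $P_v$ first and then proceeds with $v^*e_B$, whereas you multiply by $v^*e_B$ directly using $P_vv^*e_B=v^*e_B$; these are equivalent.
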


\begin{proof}
Consider $v\in \twoGN(B)$. Following the proof of Theorem \ref{gpthm4.8}, given $\vp>0$, there exist $v_{i,j}\in \oneGN (B_i)$, $1\le j\le k$, so that
\begin{equation}\label{gpeq5.23}
\left\|P_v - \sum^k_{j=1} (v_{1,j}\otimes v_{2,j})^* e_B(v_{1,j}\otimes v_{2,j})\right\|_{2,\Tr} < \vp
\end{equation}
as in \eqref{gpeq4.36}. Using Lemma \ref{gplem5.3}, we may replace this sum with one of the form
\[
 \sum(w_{1,j} \otimes w_{2,j})^* e_B(w_{1,j}\otimes w_{2,j}) + \sum(x_{1,j}\otimes x_{2,j})^* e_B(x_{1,j} \otimes x_{2,j})
\]
where the $w_{i,j}$'s are two-sided and, for each $j$, at least one of $x_{1,j},x_{2,j}$ is strictly one-sided.  By Lemma \ref{gplem5.4}, each $x_{1,j}\otimes x_{2,j}$ is strictly one-sided, so $(x_{1,j} \otimes x_{2,j})^* e_B(x_{1,j}\otimes x_{2,j})P_v = 0$ by Lemma \ref{gplem5.2}. If we multiply on the right by $P_v$, then
\begin{equation}\label{gpeq5.24}
\left\|P_v - \sum_j(w_{1,j}\otimes w_{2,j})^* e_B(w_{1,j}\otimes w_{2,j})P_v\right\|_{2,\tau} < \vp.
\end{equation}
Simple approximation allows us to obtain the same estimate for some finite subcollection of the $w_{1,j}$ and $w_{2,j}$, say $w_{1,1},\dots,w_{1,k}$ and $w_{2,1},\dots,w_{2,k}$.  We now continue to follow the proof of Theorem \ref{gpthm4.8} from (\ref{gpeq4.36}) to obtain the required $x_j$.
\end{proof}

Just as in Section \ref{gpsec4}, the next corollary follows immediately.
\begin{cor}\label{TwoCor}
Let $B_i\subseteq M_i$, $i=1,2$, be inclusions of finite von Neumann algebras satisfying $B'_i \cap M_i \subseteq B_i$. Then
$$
\twoGN_{M_1}(B_1)''\vnotimes \twoGN_{M_2}(B_2)''=\twoGN_{M_1\vnotimes M_2}(B_1\vnotimes B_2)''.
$$
\end{cor}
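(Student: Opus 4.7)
The plan is to deduce Corollary \ref{TwoCor} directly from Theorem \ref{gpthm5.5} via standard closure arguments; the substantial technical work has already been done. The right-to-left containment is elementary: for groupoid normalizers $w_i \in \twoGN_{M_i}(B_i)$, the elementary tensor $w_1 \otimes w_2$ is a partial isometry in $M_1 \vnotimes M_2$ satisfying $(w_1\otimes w_2)(B_1\vnotimes B_2)(w_1\otimes w_2)^* \subseteq w_1B_1w_1^* \vnotimes w_2B_2w_2^* \subseteq B_1\vnotimes B_2$, with the symmetric inclusion for the adjoint. Thus $w_1\otimes w_2 \in \twoGN_{M_1\vnotimes M_2}(B_1\vnotimes B_2)$, and taking double commutants gives $\twoGN_{M_1}(B_1)'' \vnotimes \twoGN_{M_2}(B_2)'' \subseteq \twoGN_{M_1\vnotimes M_2}(B_1\vnotimes B_2)''$.

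For the reverse inclusion, set $N := \twoGN_{M_1}(B_1)'' \vnotimes \twoGN_{M_2}(B_2)''$ and take $v \in \twoGN_{M_1\vnotimes M_2}(B_1\vnotimes B_2)$. Every unitary in $B_i$ normalizes $B_i$, so $B_i\subseteq \twoGN_{M_i}(B_i)''$, and hence $B_1\vnotimes B_2 \subseteq N$. Given $\vp>0$, Theorem \ref{gpthm5.5} produces $x_1,\dots,x_k\in B_1\vnotimes B_2$, $w_{1,j}\in\twoGN_{M_1}(B_1)$ and $w_{2,j}\in\twoGN_{M_2}(B_2)$ with
\[
\left\|v-\sum_{j=1}^k x_j(w_{1,j}\otimes w_{2,j})\right\|_{2,\tau} < \vp.
\]
Each summand $x_j(w_{1,j}\otimes w_{2,j})$ lies in $N$, so the entire approximant does too.

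It remains only to observe that $N$, being a von Neumann subalgebra of the finite algebra $M_1\vnotimes M_2$, is closed in $\|\cdot\|_{2,\tau}$: the trace-preserving conditional expectation $\mathbb{E}_N$ extends to the orthogonal projection of $L^2(M_1\vnotimes M_2)$ onto $L^2(N)$, so any element of $M_1\vnotimes M_2$ approximable in $\|\cdot\|_{2,\tau}$ by elements of $N$ must satisfy $\mathbb{E}_N(v)=v$ and hence lies in $N$. Letting $\vp\to 0$ gives $v\in N$, and since $v$ was an arbitrary groupoid normalizer we conclude $\twoGN_{M_1\vnotimes M_2}(B_1\vnotimes B_2) \subseteq N$, which passes to double commutants. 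There is no genuine obstacle at this stage; the only mild subtlety is the choice of closure (using $\|\cdot\|_{2,\tau}$-closedness of a von Neumann subalgebra of a finite von Neumann algebra) to promote the approximation statement of Theorem \ref{gpthm5.5} into a containment of von Neumann algebras.
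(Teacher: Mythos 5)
Your proposal is correct and is essentially the argument the paper intends: the corollary is stated as an immediate consequence of Theorem \ref{gpthm5.5}, obtained exactly by noting that elementary tensors of groupoid normalizers are groupoid normalizers of the tensor product, and that the approximants $\sum_j x_j(w_{1,j}\otimes w_{2,j})$ lie in $\twoGN_{M_1}(B_1)''\vnotimes\twoGN_{M_2}(B_2)''$, which is $\|\cdot\|_{2,\tau}$-closed in $M_1\vnotimes M_2$ via the trace-preserving conditional expectation. Your write-up simply supplies the routine details the paper omits.
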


\section*{Author Addresses}

\begin{tabular*}{\textwidth}{l@{\hspace*{2cm}}l}
Junsheng Fang&Roger Smith\\
Department of Mathematics&Department of Mathematics\\
University of New Hampshire&Texas A\&M University\\
Durham, NH 03824&College Station, TX 77843\\
USA&USA\\
\texttt{jfang@cisunix.unh.edu}&\texttt{rsmith@math.tamu.edu}
{}\\\\
Stuart White&Alan Wiggins\\
Department of Mathematics&Department of Mathematics\\
University of Glasgow&Vanderbilt University\\
Glasgow G12 8QW&Nashville, TN 37240\\
UK&USA\\
\texttt{s.white@maths.gla.ac.uk}&\texttt{alan.d.wiggins@vanderbilt.edu}
\end{tabular*}

\end{document}